\theoremstyle{plain}
\newtheorem{theorem}{Theorem}[section]
\theoremstyle{definition}
\newtheorem{definition}[theorem]{Definition}
\theoremstyle{plain}
\newtheorem{lemma}[theorem]{Lemma}
\theoremstyle{definition}
\newtheorem{example}[theorem]{Example}
\theoremstyle{plain}
\newtheorem{proposition}[theorem]{Proposition}
\theoremstyle{remark}
\newtheorem*{remark}{Remark}
\theoremstyle{plain}
\title{Detecting right-veering diffeormorphisms}
\author{Miguel {Orbegozo Rodriguez}}
\email{miguel.orbegozo@unine.ch}
\date{}
\newcommand{\qua}{\hskip 0.4em \ignorespaces}
\def\arxiv#1{\relax\ifhmode\unskip\qua\fi
\href{http://arxiv.org/abs/#1}%
{\tt arXiv:\penalty -100\unskip#1}}
\def\MR#1{\relax\ifhmode\unskip\qua\fi
\href{https://mathscinet.ams.org/mathscinet-getitem?mr=#1}{\tt MR#1}}
\def\ZB#1{\relax\ifhmode\unskip\qua\fi
\href{https://zbmath.org/?q=an:#1}{\tt Zbl\:#1}}
\def\xox#1{\csname xx#1\endcsname}
\begin{document}

\begin{abstract}
    A result of Honda, Kazez, and Mati\'{c} states that a contact structure is tight if and only if all its supporting open books are right-veering. We show a combinatorial way of detecting the left-veering arcs in open books, implying the existence of an algorithm that detects the right-veering property for compact surfaces with boundary.
\end{abstract}

\maketitle

\section{Introduction}

Let $M$ be a closed, oriented $3$-manifold. A \emph{contact structure} on $M$ is a plane field $\xi$ that is maximally non-integrable; equivalently, it can be expressed as the kernel of a $1$-form $\alpha$ such that $\alpha \wedge d \alpha > 0$. An \emph{open book decomposition} is a pair $(B, \pi)$, where $B$, called the \emph{binding}, is an oriented link in $M$; and $\pi : M \setminus B \rightarrow S^1$ is a fibration of the complement of the binding over the circle such that the fibres, called the \emph{pages}, are (diffeomorphic) compact surfaces with common boundary $B$. This is determined up to diffeomorphism by the pair $(\Sigma, \varphi)$, where $\Sigma$ is a page and $\varphi$ is the mapping class of the return map of $\pi$. Thanks to the following result of Giroux, contact structures can be studied by studying open book decompositions: 
there exists a 1-1 correspondence between contact structures up to isotopy and open book decompositions up to positive stabilisation~\cite{Giroux}.

Contact structures exhibit a fundamental dichotomy; those that admit an embedding of a disc $D^2$ such that the boundary of the disc is tangent to the contact structure are called \emph{overtwisted}, while those that do not are called \emph{tight}. Overtwisted contact structures were classified by Eliashberg, who showed in \cite{Eliashberg} that there is a unique isotopy class of overtwisted contact structures in each homotopy class of plane fields. Tight contact structures, on the other hand, are not as well understood. Thus being able to distinguish between tight and overtwisted contact structures becomes an important problem. \medskip

A result of Honda, Kazez, and Mati\'{c} \cite{Right-veering} states that a contact structure is tight if and only if all its supporting open book decompositions are \emph{right-veering}. An open book is right-veering if its monodromy sends every properly embedded arc to the right (a precise definition will be given in Section \ref{Preliminaries}). Therefore existence of a single arc that is not sent to to the right (a \emph{left-veering arc}) is enough to guarantee overtwistedness of the contact structure. However, it is often difficult to determine if such an arc exists. Indeed, if we take a \emph{basis} of the surface (a collection of arcs which cut the surface into a disc), the images of these arcs under a diffeomorphism determine it up to isotopy; however it is possible to find monodromies that are not right-veering and yet send each arc of some basis to the right. Looking at more arcs does not necessarily solve the problem, as similar counterexamples can be found, for example, when the collection of arcs we look at is a \emph{complete set}, i.e a maximal collection of pairwise disjoint and non-isotopic arcs.\medskip

Thus the usual approach to showing whether an open book is right-veering is to either exhibit an arc that goes to the left, which is found in a non-systematic way, or divide all arcs into different classes and then show that each class can only contain arcs sent to the right. The main issue with this is that in both cases the argument is case dependent. \medskip

Our main theorem shows that a left-veering arc can be detected combinatorially from a basis of arcs and its image.

\begin{restatable}%
{theorem}{main}  \label{TowerCollection1}
Let $(\Sigma, \varphi)$ be an open book, and $\Gamma$ a basis for $\Sigma$ with all arcs duplicated. Suppose that there exists a left-veering arc $\gamma$, which we can assume to be minimal. Then there exists a collection of extended towers $\{\mathcal{T}_i\}_{i = 1}^{N}$ (where $N$ is the number of intersections between $\gamma$ and the basis) supported in (subcollections of) $\Gamma$ such that: 
\begin{itemize}
    \item $\mathcal{T}_1$ is a completed extended tower.
    \item $\mathcal{T}_i$ is a completed partial extended tower, whose starting point is the adjacent point to the connecting vertex of $\mathcal{T}_{i-1}$.
    \item $\mathcal{T}_N$ is a incomplete partial extended tower, whose starting point is the adjacent point to the connecting vertex of $\mathcal{T}_{N-1}$.
\end{itemize}

Conversely, if we have such a collection, then there exists a left-veering arc $\gamma$.

\end{restatable}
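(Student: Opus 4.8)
The plan is to translate the geometric condition ``$\varphi(\gamma)$ veers to the left at an endpoint'' into a combinatorial record of how $\varphi(\gamma)$ crosses the duplicated basis $\Gamma$, and to show that this record is precisely a chain of extended towers of the stated shape. For the forward direction, fix a minimal left-veering arc $\gamma$ and realise $\gamma$ and $\varphi(\gamma)$ so that each meets $\Gamma$ efficiently, with no bigons; let $p$ be the endpoint at which $\varphi(\gamma)$ lies to the left of $\gamma$. Cutting $\Sigma$ along $\Gamma$ produces a disc together with the thin bands created by the duplication, and both $\gamma$ and $\varphi(\gamma)$ become families of properly embedded segments there. Reading $\varphi(\gamma)$ starting from $p$ is then a walk through this cut-open surface that re-enters across successive sides, each side being a copy of a basis arc (or of its $\varphi$-image); duplicating the arcs of $\Gamma$ is exactly what lets the walk record, at every step, on which side of a basis arc it lies, hence the ``left''/``right'' data.

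The second step is to cut this walk into blocks. I would show that, between consecutive visits to the distinguished configuration that becomes the \emph{connecting vertex}, the intervening portion of the walk is forced --- by efficiency of the intersections and by planarity of the cut-open surface --- to close up into a completed (partial) extended tower, supported on the sub-collection of $\Gamma$ that portion actually meets. The initial block, anchored at $p$ and at the left-veering germ there, is the completed extended tower $\mathcal{T}_1$; each later block $\mathcal{T}_i$ is a completed partial extended tower and must begin at the point adjacent to the connecting vertex of $\mathcal{T}_{i-1}$, since the walk resumes from exactly that spot; and after exactly $N$ blocks --- one per point of $\gamma \cap \Gamma$ --- the walk reaches the second endpoint of $\gamma$ on $\partial\Sigma$ rather than a connecting vertex, so the last block $\mathcal{T}_N$ is incomplete. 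Minimality of $\gamma$ is used here to guarantee the walk does not stall or yield a shorter left-veering arc (which would contradict minimality) and, with it, to pin down the count $N$.

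For the converse I would run the construction backwards. Given a chain $\{\mathcal{T}_i\}_{i=1}^N$ with connecting vertices matched to starting points as stated, concatenate the towers along those vertices to obtain an immersed path $\delta$ in $\Sigma$ together with its prescribed intersections with $\Gamma$: because $\mathcal{T}_1$ is a completed extended tower it supplies an endpoint on $\partial\Sigma$ with a left-veering germ, and because $\mathcal{T}_N$ is incomplete it supplies the other endpoint on $\partial\Sigma$. Stripping the monodromy layer from the same chain produces an arc $\gamma$ with $\varphi(\gamma)$ isotopic to $\delta$, so that $\gamma$ is a properly embedded arc and $\varphi(\gamma)$ lies to its left at that endpoint, i.e.\ $\gamma$ is left-veering. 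The main obstacle in both directions is the bookkeeping needed to see that the decomposition into towers is canonical and that the blocks have exactly the advertised types --- completed versus incomplete, full versus partial; this is where the duplication of $\Gamma$ and the no-bigon hypotheses do the real work, and where, in the converse, one must take care that the reconstructed $\gamma$ is genuinely embedded rather than merely immersed.
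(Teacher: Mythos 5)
Your outline reproduces the overall shape of the paper's argument (decompose along the duplicated basis, one tower per intersection point, adjacent points glue consecutive towers), but the central step is asserted rather than proved, and it is precisely the step that carries essentially all of the mathematical content. You claim that each block of the walk ``is forced --- by efficiency of the intersections and by planarity of the cut-open surface --- to close up into a completed (partial) extended tower,'' and dually that the last block is incomplete. That equivalence is not a bookkeeping fact: in the paper it is the content of Theorems \ref{TowerLV} and \ref{TowerFixedArc}, which are proved by induction on the number of arcs cutting out the disc, with a base case in $6$-gons (Propositions \ref{InitialLV}, \ref{InitialFixed}, \ref{TriangleLV}, \ref{TriangleFixed}, via basepoint triangles, restricted edges and splitting pairs) and an inductive step built on the slide maps $s^{\pm}$ (Lemmas \ref{Contained1}--\ref{NoIntersection} and onward, Propositions \ref{Induction1}, \ref{Induction2}), where one must check case by case that niceness, repleteness, nestedness, and completed/incomplete are preserved. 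A generic appeal to no-bigon efficiency and planarity does not produce this, and without it neither direction of your argument is established. Relatedly, your proposal never introduces the intermediate notion the paper routes everything through: \emph{fixable arc segments}. Minimality of $\gamma$ (via Lemma \ref{ShortenedLV} and Definition \ref{Minimal}) is used exactly to guarantee that the intersections with the basis cut $\gamma$ into fixable segments plus one final left-veering segment; completed towers are then equivalent to fixable segments and the incomplete tower to the left-veering one. Your use of minimality (``the walk does not stall'') does not deliver this decomposition, and your blocks are cut along visits of $\varphi(\gamma)$ to the cut-open disc, whose number need not equal $N = |\gamma \cap \Gamma|$.

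The converse has the same gap in mirror image: ``stripping the monodromy layer'' from the chain of towers is not a construction. The paper obtains the converse because Theorems \ref{TowerFixedArc} and \ref{TowerLV} are if-and-only-if statements --- each completed (partial) tower certifies that the corresponding segment is fixable, the incomplete one certifies a left-veering segment, and the short segments joining a connecting vertex to its adjacent point are fixable because they lie in the thin strip between the duplicated isotopic arcs --- so the concatenation is a genuine embedded left-veering arc. If you want your write-up to stand, you need to either prove the two tower/segment equivalences (including the partial-tower modification for segments with a fixed interior endpoint) or explicitly quote them; as written, the proposal assumes the theorem's hard part.
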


The non-standard terminology will be defined precisely in Sections \ref{Preliminaries} and \ref{Towers}. For now, we provide an overview of the result. We show that, given an open book $(\Sigma, \varphi)$ and a basis $\mathcal{B}$ of $\Sigma$, a left-veering arc induces a collection of objects, called \emph{extended towers}, that are constructed using $\mathcal{B}$. Conversely, the existence of such a collection implies the existence of a left-veering arc, which moreover can be constructed from the extended towers.

We first briefly introduce what extended towers are. Let $(\Sigma, \phi)$ be an open book, and $\Gamma$ a collection of arcs such that the arcs of $\Gamma$, together with an extra arc $\gamma$, cut out a disc $D$ from the surface. An \emph{extended tower supported in $\Gamma$} is a collection of regions (i.e. $n$-gons) with boundaries on $\Gamma \cup \varphi(\Gamma)$, with certain properties that give information on whether the arc $\gamma$ is left-veering, or isotopic to $\varphi(\gamma)$.

\subsection{Strategy of the construction}

First we show that using \emph{regions} we can detect a left-veering arc and a fixable arc segment in a simple case, that is, when they are contained in a $6$-gon cut out by $3$ arcs, two of which belong to our chosen arc collection. Regions are $n$-gons whose boundary lies on the arc collection, and have as vertices intersection points of the arc collection, labelled alternatively by $\bullet$-points and $\circ$-points. Then, a region is \emph{completed} if there exists another region with the same $\circ$-points but opposite orientation, and \emph{incomplete} otherwise --see Section \ref{Preliminaries} for a more precise definition, and the examples below for an illustration. Proposition \ref{InitialFixed} says that completed regions correspond to fixable arcs, and Proposition \ref{InitialLV} says that incomplete regions correspond to left-veering arcs. 

In Subsection \ref{Minimal Arcs} we show that existence of a left-veering arc implies the existence of a \emph{minimal left-veering arc}. This is a left-veering arc which can be divided, by intersections with a chosen arc collection, into arc segments, with disjoint interior with the basis, that are \emph{fixable} (i.e they have endpoints which coincide with their images, and relative to their endpoints they are isotopic to their image), and a unique arc segment that is left-veering.

Then in Subsection \ref{Base Case} we prove that the regions in Propositions \ref{InitialLV} and \ref{InitialFixed} form extended towers. This is our base case. The idea is that \emph{completed} (respectively \emph{incomplete}) extended towers generalise the completed (respectively incomplete) regions found in Section \ref{Preliminaries}, when the arc collections are complicated and feature multiple regions. We remark that the definitions from Section \ref{Towers} are so that the objects we consider completely characterise fixable and left-veering arcs.

Our inductive step is then to show that an extended tower supported in an arc collection with $n$ arcs induces an extended tower supported in an arc collection with $n+1$ arcs, and conversely. Moreover, we show that the properties of the extended tower are preserved. The proof of this inductive step is cumbersome and is broken down into several Lemmas in Subsection \ref{Inductive Step} to account for the different cases. 

Using this we show that if we have an arc collection that, together with a left-veering arc, cuts out a disc, then the left-veering arc is detected by an incomplete extended tower supported in the arc collection. Similarly, a fixable arc segment is detected by a completed extended tower. This is the content of Theorems \ref{TowerLV} and \ref{TowerFixedArc}.

Finally, we show that in the case of the minimal left-veering arc not being disjoint from our chosen basis, we can detect it with a collection of completed extended towers and one incomplete extended tower. This constitutes our main result, Theorem \ref{TowerCollection1}. 

The number $N$ of extended towers is the number of intersections of the left-veering arc with the chosen basis. Moreover, this corresponds to a point $\alpha \cap \varphi(\alpha)$ for some arc $\alpha$ in the basis. The number of such points is finite and gives an upper bound for $N$. Moreover, each extended tower is also a finite collection of regions. Therefore, this implies the existence of the following --crude-- algorithm. Take a basis of arcs $\mathcal{B}$ and consider all collections of $n$ extended towers, for all $n\leq N$, where $N$ is the total number of intersections of arcs of the basis with their images. This is a finite procedure as there is a finite number of objects to consider, so we either find a collection of extended towers that construct a left-veering arc, or the algorithm stops and concludes that the monodromy is right-veering. In Section \ref{Example}, we explain why this is, in general, not very efficient.

We can see the strategy in Figure \ref{fig:Strategy}. Once we have a basis with arcs duplicated, they cut $\Sigma$ into a disc. Moreover, the arcs from the basis divide the minimal left-veering arc $\gamma$ into segments which are either fixable or left-veering, and disjoint from the basis except at endpoints. First, Theorem~\ref{TowerFixedArc} shows that the segment $\gamma_1$ being fixable is detected by a completed extended tower supported in the arc collection that, together with $\gamma_1$, cuts out the disc $P_1$ (which exists because the arcs in this arc collection belong to our chosen basis, which cuts the entire surface into a disc), and similarly for $\gamma_2$. Then, Theorem \ref{TowerLV} shows that $\gamma_3$ being left-veering is detected by an incomplete extended tower supported in the arc collection that together with $\gamma_3$ cuts out the disc $P_3$.

\begin{figure} htp
    \centering
    \includegraphics[width=7cm]{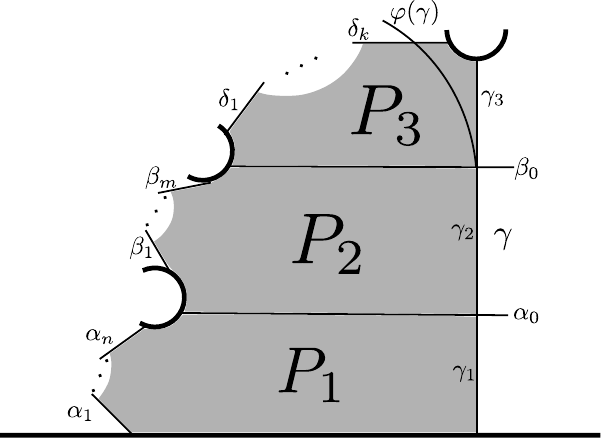}
    \caption[Algorithm strategy.]{A left-veering arc $\gamma$, which we want to detect using extended towers. The arcs $\alpha_i$, $\beta_j$, $\delta_k$ are arcs from a chosen basis $\mathcal{B}$. The arc is minimal with respect to $\mathcal{B}$, which means that its image is itself up until the point $x$ --that is, all segments up to $x$ are fixable-- and after that it goes to the left --that is, the last segment is left-veering. Finally, we will use the convention that the thicker lines represent the boundary of the surface.}
    \label{fig:Strategy}
\end{figure}

\subsection{Examples}

We illustrate the strategy outlined in the previous subsection with a couple of concrete examples. First, we have a torus with a right-veering basis, which supports an extended tower consisting of a single region $R_1$, see Figure \ref{fig:torus}. This is \emph{incomplete}, that is, we cannot find another region which shares its $\circ$-points with $R$. By Proposition \ref{InitialLV}, this implies that the arc $\beta$ given by the sum of the arcs of the basis is left-veering.

\begin{figure}htp
    \centering
    \includegraphics[width=4cm]{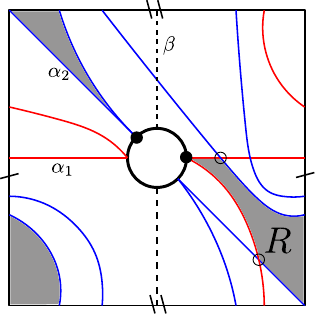}
    \caption[A non-right-veering monodromy on a punctured torus.]{The extended tower $\{R\}$ is incomplete, showing that the arc $\beta = \alpha_1 + \alpha_2$ is left-veering.}
    \label{fig:torus}
\end{figure}

Second, in Figure \ref{fig:Two-steps} we have an example where we construct the left-veering arc using two extended towers. First, the arcs $\alpha_1$ and $\alpha_2$ support a region $R_1$ which is completed by the region $R'_1$, since they have the same $\circ$-points. This implies by Proposition \ref{InitialFixed} that the segment $\gamma_1$ is fixable, that is, we can perform an isotopy of $\varphi$ that fixes not only the boundary, but also the point $x$, such that $\varphi(\gamma) = \gamma$. Then the arcs $\alpha_2$ and $\alpha_3$ support an incomplete extended tower consisting of the single incomplete region $R_2$. (To see that it is incomplete, observe that the shaded part $A$ of the surface, where the completion would have to be, is not a disc). This implies that the segment $\gamma_2$ is left-veering. Note that it makes sense to talk about left-veeringness in this context since we have fixed the point $x$. Putting everything together, we get that the arc $\gamma = \gamma_1 \cup \gamma_2$ is left-veering.

\begin{figure} [htp]
    \centering
    \includegraphics[width=10cm]{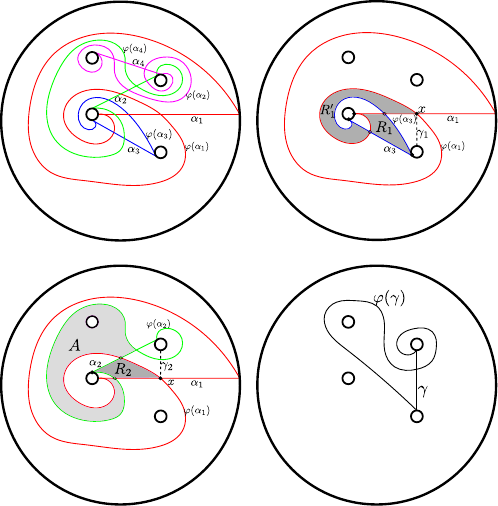}
    \caption[Two extended towers.]{On the top left, a basis of arcs and their images, determining the monodromy. On the top right, the extended tower $\{R_1, R'_1\}$ is completed, so $\gamma_1$ is fixable. On the bottom left, $\{R_2\}$ is incomplete, so $\gamma_2$ is left-veering. This implies that $\gamma = \gamma_1 \cup \gamma_2$ is left-veering, as we can see on the bottom right.}
    \label{fig:Two-steps}
\end{figure}

\subsection{Relationship to knot Floer homology}

A recent result of Baldwin, Ni, and Sivek \cite{BNS} characterises right-veering diffeomorphisms for open books with connected boundary in terms of an invariant defined using knot Floer homology called $b(K)$ (where $K$ is the binding), originally defined in \cite{Baldwin-Vela-Vick}. This invariant counts the first level of the filtration induced by the knot where the contact class vanishes. Baldwin, Ni, and Sivek's result then says that the monodromy of a fibred knot is not right-veering if and only if $b(K) = 1$, that is, the contact class vanishes in the lowest possible filtration level. It seems possible to link this result with ours, this is the subject of work in progress. 

Indeed, take for example the open book in Figure \ref{fig:torus}. This is given in \cite{BNS} as an example of an open book that is not right-veering but has a right-veering basis.
We saw that the extended tower given by the region $R$ constructs the left-veering arc. However, after a suitable isotopy of the arcs to obtain the knot Floer complex, it also realises a differential killing the contact class in the lowest filtration level, showing that $b(K) = 1$ as predicted by \cite{BNS}. This suggests that the regions appearing in extended towers constructing a left-veering arc could be related to differentials realising $b(K) = 1$. The problem in establishing a connection is that with more complicated extended towers the relationship with knot Floer differentials is not so clear. We also stress that our methods directly construct the left-veering arc, while the ones in \cite{BNS} merely show that such an arc exists, as well as the fact that our methods work for multiple boundary components. Indeed, this is the case with most of the examples presented in this paper.   

\subsection{Outline}

The outline of the paper is as follows. In Section \ref{Preliminaries} we give some preliminary definitions and we introduce regions, which will be the building blocks of our extended towers, and we show that they detect left-veering arcs and fixable arc segments in simple cases. In Section \ref{Towers} we define extended towers and their properties. The main results are proved in Section \ref{Results}. We first show the existence of minimal left-veering arcs, and we prove by induction that each segment of such an arc is detected in a basis by an extended tower. Finally, putting all the arc segments together gives the collection of extended towers, which are connected by the fixed points that are the endpoints of the arc segments. To conclude, in Section \ref{Example} we show that the collection of extended towers needed to detect a left-veering arc can be arbitrarily large, but that in some cases this systematic way of detecting a left-veering arc is not the most efficient. We also include a detailed example of a left-veering arc being detected by a collection of more than one extended tower. \medskip

\subsection{Acknowledgements}

I would like to thank my supervisor Andy Wand for invaluable advice and support. I would also like to thank the anonymous referee for many useful comments and suggestions.
\section{Preliminaries} \label{Preliminaries}
We start with some definitions regarding open books, and we introduce the concept of a region.

\begin{definition}
    Let $\Sigma $ be a compact surface with nonempty boundary. A \emph{properly embedded arc} is the image of an embedding $\alpha: [0,1] \hookrightarrow \Sigma$ such that $\alpha(0), \alpha(1) \in \partial \Sigma$. An \emph{arc segment} is the image of an embedding of the unit interval that is not necessarily proper, i.e we do not require that $\alpha(0), \alpha(1) \in \partial \Sigma$.
\end{definition}

\begin{definition}
    Let $(\Sigma, \varphi)$ be an open book. An \emph{arc collection} in $\Sigma$ is a set of pairwise disjoint properly embedded arcs. An arc collection $\mathcal{B}$ such that $\Sigma \setminus \mathcal{B}$ is a disc is called a \emph{basis}. 
\end{definition}

The significance of bases comes from the fact that a mapping class is uniquely determined by its action on any basis.

\begin{definition}
    Let $(\Sigma, \varphi)$ be an open book, and  $\Gamma$ an arc collection in $\Sigma$. If $\Sigma \setminus \Gamma$ contains an $n$-gon component with exactly one edge on each element of $\Gamma$, we say $\Gamma$ \emph{cuts out an $n$-gon}.
\end{definition}

\begin{definition}
    Let $\alpha_1, \alpha_2$ be disjoint properly embedded oriented arcs in a compact surface with boundary $\Sigma$, such that there is a boundary arc going from $\alpha_1(1)$ to $\alpha_2(0)$. The \emph{arc-slide} of $\alpha_1$ and $\alpha_2$ is (the isotopy class of) the arc $\beta$ that starts at $\alpha_1(0)$ and ends at $\alpha_2(1)$, such that $\alpha_1, \alpha_2$, and $\beta$ cut out a $6$-gon from $\Sigma$ whose standard boundary orientation coincides with the orientation from $\alpha_1$, $\alpha_2$, and $-\beta$.
\end{definition}

\begin{figure}[htp]
    \centering
    \includegraphics[width=4cm]{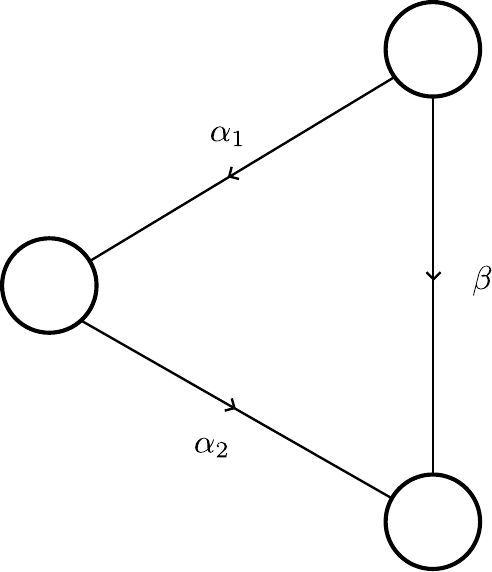}
    \caption[Arc-slide of two arcs.]{The arc-slide $\beta$ of two arcs $\alpha_1$ and $\alpha_2$. Observe we need to reverse the orientation of $\beta$ to obtain an orientation of the boundary of the $6$-gon.}
    \label{fig:Arc-slide}
\end{figure}

 Any two bases for $\Sigma $ can be related by a sequence of arc-slides (see \cite{Contact-class} for a proof of this fact). We can also extend this definition to larger families of arcs; if a collection of pairwise disjoint arcs $\{ \alpha_i \}_{i=1}^n$ is such that there is a boundary arc going from $\alpha_i(1)$ to $\alpha_{i+1}(0)$, then the arc $\beta$ that starts at $\alpha_1 (0)$ and ends at $\alpha_n(1)$ and cuts out a disc from $\Sigma$ (whose standard boundary orientation coincides with the orientation from $\alpha_i$ and $-\beta$) is called the \emph{arc-sum} of $\{ \alpha_i \}_{i=1}^n$.

 We now give the notion of right-veering arcs as introduced in \cite{Right-veering}. Our definition is phrased in a slightly different way, in order to be consistent with the orientation convention that we will use, but is equivalent to the one in \cite{Right-veering}.

\begin{definition} 
Let $(\Sigma, \varphi)$ be an open book decomposition, and let $\alpha$ be an oriented properly embedded arc with starting point $x$. We will adopt the convention that its image $\varphi(\alpha)$ is given the opposite orientation to $\alpha$. We then say that $\alpha$ is \emph{right-veering} (with respect to $\varphi$) if $\varphi(\alpha)$ is isotopic to $ \alpha$ or, after isotoping $\alpha$ and $\varphi(\alpha)$ so that they intersect transversely with the fewest possible number of intersections,  $( \alpha'(0),\varphi(\alpha)'(1))$ define the orientation of $\Sigma$ at $x$. In this latter case we will say that $\alpha$ is \emph{strictly right-veering}. If $\alpha$ is not right-veering we say it is \emph{left-veering}.
\end{definition} 

In Figure \ref{fig:RVArc} we can see that intuitively a right-veering arc $\alpha$ is such that $\varphi(\alpha)$ is to the right of $\alpha$ near the starting point once we have isotoped them so that they have the fewest possible number of intersections.

\begin{figure}[htp]
    \centering
    \includegraphics[width=5cm]{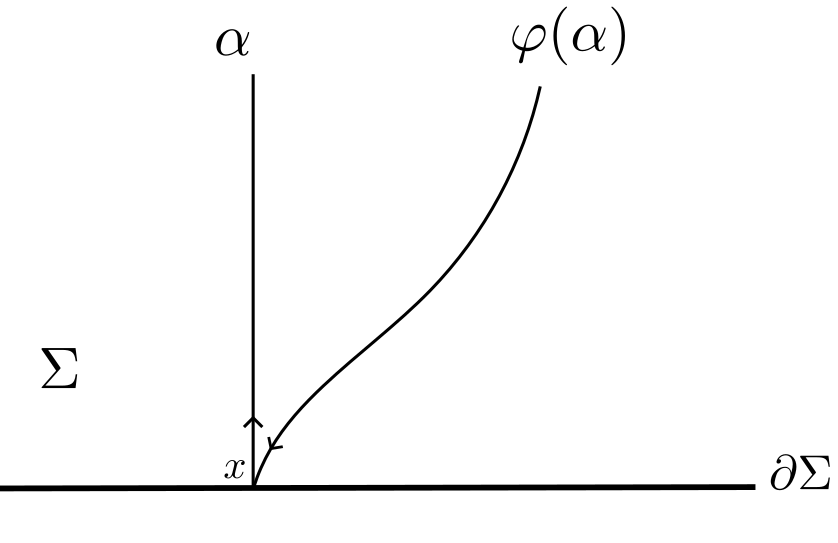}
    \caption[Right-veering arc.]{A (strictly) right-veering arc $\alpha$.}
    \label{fig:RVArc}
\end{figure}

Although this definition only refers to the starting point of the oriented arc, we will usually say that an arc is right-veering to mean that both itself and the arc with opposite orientation are right-veering (thus referring to both endpoints). However, when we say that an arc is left-veering we only refer to its starting point. This is because one oriented left-veering arc is enough for an open book to support an overtwisted contact structure, by \cite[Theorem 1.1]{Right-veering}, so we only need to detect one.

\begin{definition}
Let $(\Sigma, \varphi)$ be an open book decomposition. We say that $\varphi$ is \emph{right-veering} if every oriented properly embedded arc in $\Sigma$ is right-veering.
\end{definition}

Sometimes we will say that the open book itself is right-veering when the monodromy is right-veering.

Figure \ref{fig:RVBasis} shows that, to determine if an open book is right-veering, it is not enough to check that every arc of a basis is right-veering. The page is a planar surface with $4$ boundary components, and the monodromy is determined by the images of the arcs from the basis. As can be seen in Figure \ref{fig:RVBasis}, the dotted arc is left-veering, and yet the monodromy sends each arc from the basis to the right.

\begin{figure}[htp]
    \centering
    \includegraphics[width=6cm]{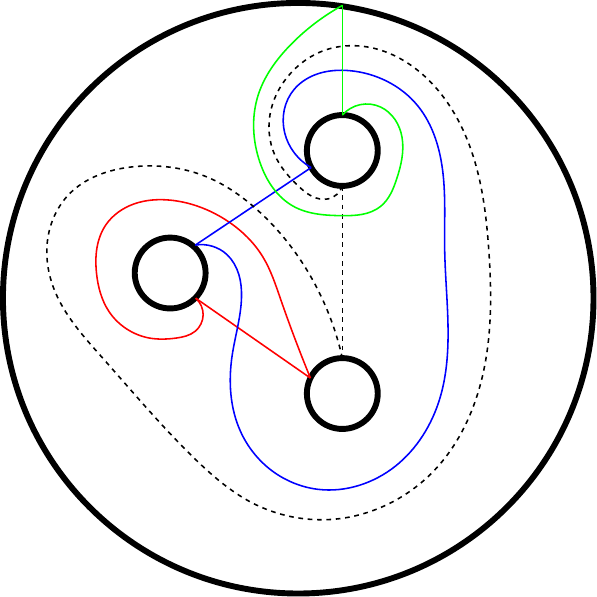}
    \caption[Right-veering basis in an open book that is not right-veering]{A basis of right-veering arcs, and a (dotted) left-veering arc.}
    \label{fig:RVBasis}
\end{figure}

%\begin{definition}
%Let $(\Sigma, \varphi)$ be an open book. We say that an arc $\gamma$ is \emph{bigon free} with respect to an arc $\alpha$ if $ \gamma$ does not form any bigons with $\alpha$. Similarly, we say that an arc collection $\Gamma$ is \emph{bigon free} if for every $\alpha, \beta \in \Gamma$, $\alpha$ does not form any bigons with $\varphi (\beta)$. 
%\end{definition}
Let $(\Sigma, \varphi)$ be an open book. Given an arc collection $\Gamma$, by an innermost disk argument we can always isotope $\varphi(\Gamma)$ so that or every $\alpha, \beta \in \Gamma$, $\alpha$ does not form any bigons with $\varphi (\beta)$. Therefore we will always assume this is the case, and thus that arcs and arc images in our arc collections intersect minimally. 

We now recall some definitions and notation from \cite{DetectingTightness}.

\begin{definition}
Let $(\Sigma, \varphi)$ be an open book, $\Gamma = \{\alpha_i\}_{i = 1}^n$ an arc collection, and $\varphi(\Gamma) = \{\varphi(\alpha_i) \}_{i=1}^n$ its image under the mapping class $\varphi$ (we orient $\varphi(\alpha_i)$ with the opposite orientation to the one induced by $\alpha_i$). A \emph{region $R$ in $(\Sigma, \varphi,\Gamma)$} (or \emph{supported in $(\Sigma, \varphi,\Gamma)$}) is the image of an immersed $2k$-gon such that:

\begin{itemize}
    \item The edges are mapped to $\Gamma$ and $\varphi (\Gamma)$ alternatively.
    \item The orientations of the arcs and their images orient $\partial R$.
    \item Every corner is acute, i.e for every vertex $ x = \alpha_i \cap \varphi(\alpha_j)$, in a neighbourhood of $x$, $R$ only intersects one of the $4$ quadrants defined by $\alpha_i$ and $\varphi(\alpha_j)$ at $x$.
    \item The immersion restricted to the vertices of the $2k$-gon is injective.
\end{itemize}

A point $\alpha_i \cap \varphi(\alpha_j)$ is \emph{positive} if the tangent vectors of $\alpha_i$ and $\varphi(\alpha_j)$ (in that order) determine the orientation of $\Sigma$ at the intersection point, and \emph{negative} otherwise. Moreover, we will say a region is \emph{positive} if the boundary orientation given by the orientation of the arcs from $\Gamma$ and $\varphi(\Gamma)$ coincides with the usual counterclockwise orientation, and \emph{negative} otherwise.

We will denote positive intersection points by $\bullet$-points and negative intersection points by $\circ$-points. See Figure \ref{fig:Region} for an example of a region with its positive and negative points labelled. We will also denote the set of $\bullet$-points (respectively $\circ$-points) of a region $A$ by Dot($A$) (respectively Circ($A$)), and the set of vertices $\textrm{Dot}(A) \cup \textrm{Circ}(A)$ as $\textrm{V}(A)$. 
\end{definition}

 \begin{figure}[htp]
    \centering
    \includegraphics[width=5cm]{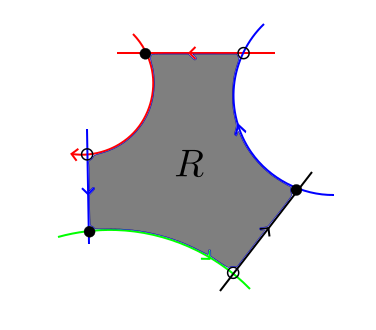}
    \caption[A region.]{A region $R$, which is positive because the arcs orient $\partial R$ counterclockwise. We will use the convention that straight lines represent arcs and curved lines represent their images under $\varphi$.}
    \label{fig:Region}
\end{figure}

\begin{definition}
Let $R$ be a  region in $(\Sigma, \varphi,\Gamma)$. We say that $R$ is \emph{completed} if there exists another region $R'$, such that $\textrm{Circ}(R') \subset \textrm{Circ}(R)$, but the induced orientation of $\partial R'$ is the opposite orientation to the one in $R$. We will call this region the \emph{completion} of $R$.
If no such region exists we say $R$ is \emph{not completed}.
\end{definition}

\begin{remark}
A completion of a region need not use all of the arcs used in the region.
\end{remark}

 \begin{figure}[htp]
    \centering
    \includegraphics[width=7cm]{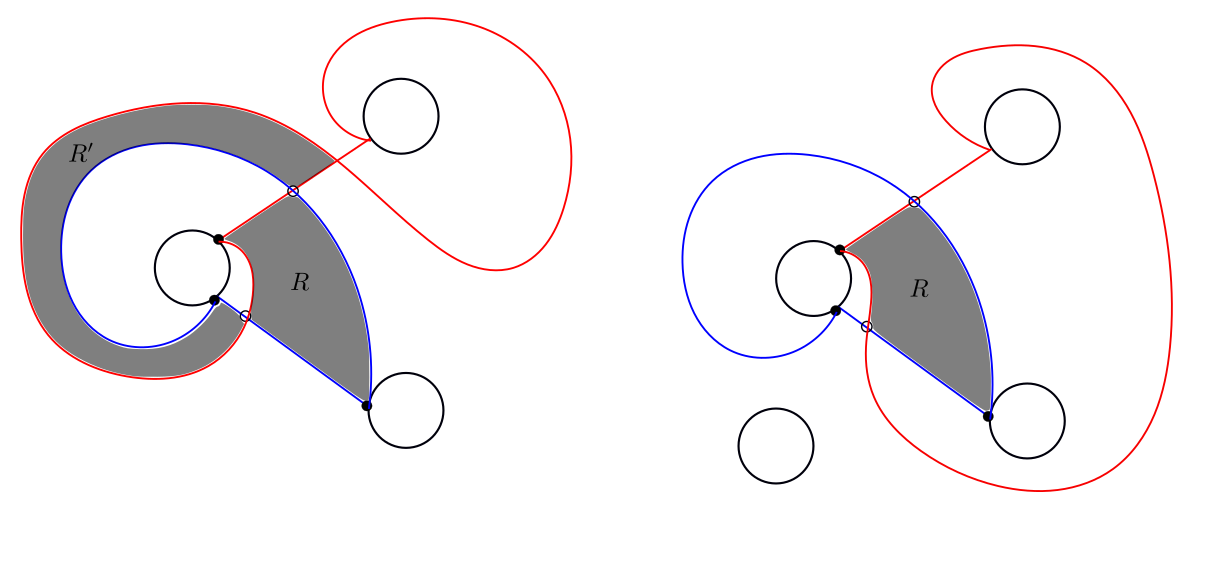}
    \caption[Completion of a region.]{On the left, a region $R$ and its completion $R'$. On the right, a region $R$ which is not completed.}
    \label{fig:Completion}
\end{figure}

Now our setup differs slightly from the standard version of consistency as defined in \cite{LegendrianSurgery} and \cite{DetectingTightness}. The reason for this is that, while Honda, Kazez, and Mati\'{c}'s result establishes a relationship between the right-veering property and tightness, the two concepts are not equivalent, since for any overtwisted contact manifold we can find a supporting open book decomposition that is right-veering \cite[Proposition 6.1]{Right-veering}. The technology in \cite{LegendrianSurgery} and \cite{DetectingTightness} aims to detect tightness, while we aim to detect the right-veering property, and so it is natural that there will be similarities as well as differences. In particular, we pay special attention to \emph{basepoint triangles}, defined below, and in Section \ref{Towers} the difference between our setup and the one in \cite{DetectingTightness} will become more apparent.

\begin{definition}
Let $(\Sigma, \varphi)$ be an open book, $\Gamma$ an arc collection in $\Sigma$, and $\alpha_1, \alpha_2 \in \Gamma$. Assume there exists a boundary component $B$ of $\Sigma$ which contains an endpoint of $\alpha_1$ and an endpoint of $\alpha_2$. Then assume that $\varphi(\alpha_1)$ is boundary parallel near $B$ until it intersects $\alpha_2$, and does not intersect any other $\alpha \in \Gamma$ before doing so. This creates a triangle (with sides an arc segment of $\varphi(\alpha_1)$, an arc segment of $\alpha_2$ and an arc segment of $B$). We can see such a triangle in Figure \ref{fig:BasepointTriangle}. We will refer to this triangle as a \emph{basepoint triangle}. For an arc collection $\Gamma$, we denote the set of $\circ$-points that are vertices of basepoint triangles by $\textrm{Circ}_{\partial}(\Gamma)$.
\end{definition}

 \begin{figure}[htp]
    \centering
    \includegraphics[width=3cm]{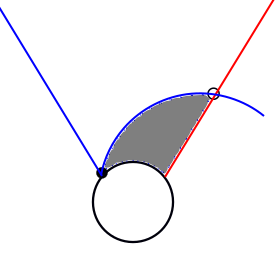}
    \caption[Basepoint triangle.]{A basepoint triangle (shaded).}
    \label{fig:BasepointTriangle}
\end{figure}

\begin{remark}
    If there is an arc image $\varphi(\alpha)$ that intersects a basepoint triangle disjoint from $\alpha$, it must do so forming a bigon. Note that since we assume that our collections are bigon free then this cannot happen. We can generalise this situation for the case where, instead of a basepoint triangle, we have a disc component of $\Sigma \setminus (\Gamma \cup \{ \alpha \})$ with a unique edge on an arc $\alpha \in \Gamma$ (a basepoint triangle is the simplest case of such a disc, with an edge on an arc, and the other two on the boundary and an arc image respectively).

\end{remark}

\begin{definition}
    Let $\Gamma$ be an arc collection in an open book $(\Sigma, \varphi)$,  $\{\varphi(\beta_i)\}_{i = 1}^n$ a subcollection of $  \varphi(\Gamma)$, and $a$ an arc segment of an arc $\alpha \in \Gamma$ such that $\{\varphi(\beta_i)\}_{i = 1}^n \cup \{a\}$ cut out a disc $D$ from $\Sigma$. Then we say $a$ is \emph{restricted in $\Gamma$}.
\end{definition}

Similarly as in the previous remark, for a restricted edge $a$ and associated disc $D$, images of arcs disjoint from $D$ cannot intersect the restricted edge because they would then have to form a bigon. Clearly the edge of a basepoint triangle that lies on an arc is restricted, with the disc $D$ being the basepoint triangle itself.

Now we want to show that, in the case where the arc-slide of a pair of arcs --with the opposite orientation-- is left-veering, regions with an edge on a basepoint triangle can detect this left-veering arc. We will see later that this is a simple example of an extended tower, and it will be the base case of our induction. 

This means that we want to understand what possibilities there are for regions when we look at three arcs $\{ \alpha_0, \alpha_1, \alpha_2 \}$ that cut out a $6$-gon from the surface. Call this $6$-gon $P$. Then $\varphi(P)$ must also be a $6$-gon. Now consider an arc collection $\mathcal{C}$ (which may also include some of the arcs cutting out $P$). When segments of two of the arc images $\varphi(\alpha_i), \varphi(\alpha_j)$ form opposite sides of a rectangle which is a connected component of $\varphi(P) \setminus \mathcal{C}$, we will say that they are \emph{parallel (with respect to $\mathcal{C}$)} along those segments.

\begin{proposition} \label{InitialLV}
Let $\alpha_0, \alpha_1, \alpha_2$ be properly embedded arcs that cut out a $6$-gon $P$ from $\Sigma$, oriented counterclockwise, and assume $\alpha_1$ and $\alpha_2$ are right-veering. Then $\alpha_0$ is left-veering if and only if there exists a positive region $R$ in $\{\alpha_1, \alpha_2\}$ contained in $P$, with $\bullet$-points on $\partial \Sigma$ and where one of the edges is the edge of a basepoint triangle, that has no completion.
\end{proposition}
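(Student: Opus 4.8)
The plan is to set up the situation concretely: the arcs $\alpha_0,\alpha_1,\alpha_2$ cut $\Sigma$ into the hexagon $P$, and $\alpha_0$ is by definition the arc-slide (with reversed orientation) of $\alpha_1$ and $\alpha_2$, so the three arcs, read with appropriate orientations, bound $P$. Applying $\varphi$, the image $\varphi(P)$ is another hexagon with edges on $\varphi(\alpha_0),\varphi(\alpha_1),\varphi(\alpha_2)$. The key translation is: $\alpha_0$ being left-veering at its starting point $x$ is a statement about where $\varphi(\alpha_0)$ goes relative to $\alpha_0$ near $x$, and since $x$ lies on $\partial\Sigma$ on the boundary component $B$ (the one giving the arc-slide), this is detected by how $\varphi(\alpha_0)$, hence (via the hexagon $P$ and its image) how the images $\varphi(\alpha_1),\varphi(\alpha_2)$, re-enter $P$ near $B$.

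First I would prove the "only if" direction. Assume $\alpha_0$ is left-veering. After isotoping $\varphi(\alpha_0)$ to intersect $\alpha_0$ minimally, near $x$ the image leaves to the left; I would chase this back through the arc-slide hexagon to conclude that $\varphi(\alpha_1)$ (say) must run boundary-parallel along $B$ until it hits $\alpha_2$ without first meeting $\alpha_1$ or $\alpha_0$ — i.e.\ it forms a basepoint triangle with $\alpha_2$ and $B$. The point is that right-veeringness of $\alpha_1$ and $\alpha_2$ forbids the "innermost" obstruction from living on those arcs, so it is pushed onto $\alpha_0$, which forces this boundary-parallel behaviour. From the hexagon $P$, the portion of $\varphi(P)$ near $B$ together with segments of $\alpha_1$ and $\alpha_2$ then bounds a positive sub-$2k$-gon $R$ in $\{\alpha_1,\alpha_2\}$, contained in $P$, with its $\bullet$-points on $\partial\Sigma$ (they sit on $B$, being corners of the region where the boundary-parallel arc image turns) and one edge equal to the edge of the basepoint triangle. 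I would then argue $R$ has no completion: a completion $R'$ would have $\mathrm{Circ}(R')\subset\mathrm{Circ}(R)$ with reversed orientation, and running the correspondence backwards such an $R'$ would produce an isotopy making $\varphi(\alpha_0)$ agree with $\alpha_0$ (or move it to the right) near $x$, contradicting left-veeringness. This uses the bigon-free assumption to pin down that the region near $B$ is forced and cannot be cancelled.

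For the converse, given such an incomplete positive region $R$ with the stated properties, I would reverse the construction: the basepoint-triangle edge and the $\bullet$-points on $\partial\Sigma$ let me reassemble, inside $P$ and its image, a picture of $\varphi(\alpha_0)$ near $x$ that veers left, so $\alpha_0$ is not right-veering — unless $\varphi(\alpha_0)$ were isotopic to $\alpha_0$, but that isotopy would exactly furnish a completion $R'$ of $R$, contradicting incompleteness. So $\alpha_0$ is left-veering.

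The main obstacle I expect is the "only if" direction, specifically showing that the left-veering obstruction is genuinely localised to a single basepoint triangle and gives a region honouring \emph{all} the technical constraints in the statement at once (positive, contained in $P$, $\bullet$-points on $\partial\Sigma$, one edge on a basepoint triangle, and acute corners with injective vertices). This requires a careful case analysis of how $\varphi(P)$ can sit relative to $\{\alpha_1,\alpha_2\}$ near $B$ after minimal-position isotopies, repeatedly invoking that $\alpha_1,\alpha_2$ are right-veering to rule out configurations and that collections are bigon-free to rule out spurious intersections; the bookkeeping of orientations (recalling the convention that $\varphi(\alpha_i)$ carries the reversed orientation, and the resulting sign of the hexagon) is where the delicate work lies.
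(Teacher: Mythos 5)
Your forward construction follows the paper's line (right-veeringness of $\alpha_1,\alpha_2$ plus left-veeringness of $\alpha_0$ forces $\varphi(\alpha_1)$ to form the basepoint triangle with $\alpha_2$, producing the region $R$), but the converse as you state it has a genuine gap. Your dichotomy --- ``the region lets me reassemble a picture of $\varphi(\alpha_0)$ veering left, unless $\varphi(\alpha_0)$ is isotopic to $\alpha_0$'' --- omits the possibility that $\alpha_0$ is \emph{strictly right-veering}, and the existence of $R$ with all its stated properties is perfectly consistent with that case: it is exactly the splitting-pair configuration of Proposition \ref{RV} (bottom right of Figure \ref{fig:TriangleCases}), where $R$ exists, is completed, and $\alpha_0$ veers right. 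So nothing about $\varphi(\alpha_0)$ near $x$ can be read off from $R$ alone; incompleteness must be played against the strictly right-veering case too. The missing mechanism is the paper's: if $\alpha_0$ is right-veering (strictly or isotopic to its image), then since $\varphi(P)$ is an embedded disc, a segment of $\alpha_1$ cuts out a disc with $\varphi(\alpha_1)$ and $\varphi(\alpha_2)$; this segment is restricted and the disc furnishes a completion of $R$, contradicting incompleteness. Without this step your converse does not close.

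A secondary weakness is in the ``only if'' direction: you justify the non-existence of a completion by ``running the correspondence backwards,'' i.e.\ by appealing to the very equivalence between completions and right-veering behaviour that the proposition (together with Proposition \ref{InitialFixed}) is establishing, which is circular as a proof plan. The paper instead argues directly and combinatorially: any completion $R'$ must be a rectangle (bigon-freeness rules out a bigon, and $R$ has only two $\circ$-points); its edge on $\alpha_2$ is restricted because it lies on the basepoint triangle, hence so is its edge on $\alpha_1$; and the $\bullet$-point of $R'$ on $\alpha_1$ can neither lie between $z=\alpha_1\cap\varphi(\alpha_0)$ and $y=\alpha_1\cap\varphi(\alpha_2)$ (this would force a bigon of $\varphi(\alpha_1)$) nor on the other side of $z$ (then $\varphi(\alpha_0)$ would cross a restricted edge). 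Your sketch needs an argument of this concrete form, using the known exit point $z$ of $\varphi(\alpha_0)$, rather than an appeal to the correspondence.
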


\begin{proof}
First assume that $\alpha_0$ is left-veering, which means (since $\alpha_2$ is right-veering), that it leaves $P$ by intersecting $\alpha_1$ in a point $z$. Since $\alpha_2$ is right-veering, $\varphi(\alpha_2)$ must leave $P$ by intersecting $\alpha_1$ in a point $y$. This in turn means that $\varphi(\alpha_1)$ must leave $P$ by intersecting $\alpha_2$. Indeed, since $\alpha_1$ and $\alpha_2$ are disjoint, $\varphi(\alpha_1)$ cannot intersect $\varphi(\alpha_2)$. This creates the region $R$, with an edge being an edge of the basepoint triangle formed by $\alpha_2$ and $\varphi(\alpha_1)$. For a contradiction, suppose that this region can be completed with a region $R'$, which must necessarily be a rectangle (it cannot be a bigon because we are assuming our collections are bigon free, and it cannot have more than 4 vertices because $R$ only has $2$ $\circ$-points). Moreover, the edge of this rectangle on $\alpha_2$ is restricted, because it is an edge of a basepoint triangle. This in turn means that the edge of the rectangle on $\alpha_1$ is restricted. Then the $\bullet$-point of $R'$ on $\alpha_1$ cannot be between $z$ and $y$, because then $\varphi(\alpha_1)$ would have to form a bigon. This means that it would have to be between $z$ and the other endpoint of $\alpha_1$ (that is, $\alpha_1(0)$). But this means that $\varphi(\alpha_0)$ intersects the restricted edge --a contradiction. 

Conversely, assume that there exists a region $R$ satisfying the above conditions,  in particular, it has no completion. Suppose for a contradiction that $\alpha_0$ is right-veering. But then, since the image of $P$ must be a disc, there must be an arc segment on $\alpha_1$ cutting out a disc with $\varphi(\alpha_1)$ and $\varphi(\alpha_2)$, so it must be a restricted edge with respect to $\{ \alpha_1, \alpha_2 \}$. However this in turn means that $R$ has a completion --a contradiction. So $\alpha_0$ must be left-veering, see Figure \ref{fig:TriangleLV}. For a more concrete example, we can see in Figure \ref{fig:RVBasis} the positive region that has no completion because the dotted arc is left-veering.

 \begin{figure}[htp]
    \centering
    \includegraphics[width=5cm]{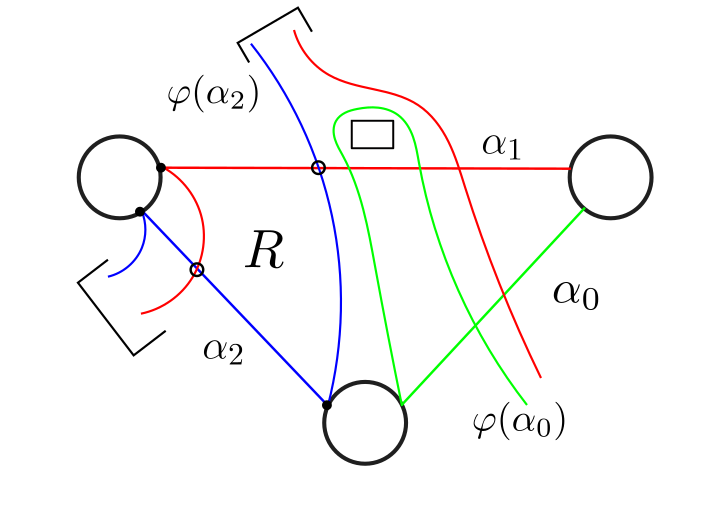}
    \caption[Incomplete region from left-veering arc.]{The incomplete region $R$ when $\alpha_0$ is left-veering.}
    \label{fig:TriangleLV}
\end{figure}
\end{proof}

For completeness, we include the case where all three arcs cutting out a $6$-gon are right-veering.

\begin{proposition} \label{RV}
Let $\alpha_0, \alpha_1, \alpha_2$ be properly embedded arcs cutting out a $6$-gon $P$, oriented counterclockwise, and assume they are right-veering. Then we can find a non-empty collection of regions in $\{ \alpha_0, \alpha_1, \alpha_2 \}$ such that every positive region is completed by a negative region, and every interior $\bullet$-point is a vertex of two regions (one positive and one negative). 
\end{proposition}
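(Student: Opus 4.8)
The plan is to carry out the analysis of Proposition~\ref{InitialLV} while keeping track of all three arcs at once. Put $\varphi(\alpha_0),\varphi(\alpha_1),\varphi(\alpha_2)$ into minimal position with respect to $\{\alpha_0,\alpha_1,\alpha_2\}$, and recall that the monodromy may be taken to be the identity near $\partial\Sigma$; then $\varphi(P)$ is again a hexagon, with the same three boundary edges and the same six boundary vertices as $P$, and (writing $a_i$ for the edge of $\alpha_i$ on $P$) its arc-edges $\varphi(a_0),\varphi(a_1),\varphi(a_2)$ join the same pairs of boundary vertices as $a_0,a_1,a_2$. Hence the whole configuration is confined to the disc $D:=P\cup\varphi(P)$. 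This is the structural point I would lean on: every region appearing in $D$ is a complementary polygon of a disc, so the ``mirror'' of such a region across any chosen subset of its $\circ$-points is again an embedded polygon, hence automatically a genuine region; checking that a positive region is completed therefore reduces to exhibiting the required $\circ$-points, with no remaining embeddedness obstruction.

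First I would pin down the combinatorics. As in Proposition~\ref{InitialLV}, right-veeringness of $\alpha_i$ forces $\varphi(\alpha_i)$ either to be isotopic to $\alpha_i$, or to enter $P$ near $\alpha_i(0)$ and exit by crossing a prescribed one of the other two arc-edges of $P$ (and similarly near $\alpha_i(1)$); since the $\alpha_i$ are pairwise disjoint, $\varphi(\alpha_i)$ never meets $\varphi(\alpha_j)$, which together with the hexagon condition leaves only finitely many patterns, up to the cyclic symmetry of $P$. In each pattern I would read off the complementary polygons of $D$ that qualify as regions: a distinguished positive region $R$ (carrying an edge on a basepoint triangle, as in Proposition~\ref{InitialLV}), its completion, and a bounded number of further elementary regions. (In the degenerate pattern, all three images are isotopic to the $\alpha_i$, $\varphi$ acts trivially on $P$, and there are no interior intersection points.)

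Then I would check the two incidence conditions for that collection. Every positive region on the list is completed, by its mirror polygon in $D$, which is embedded by the remark above; and if some positive region had no completion then, by the converse of Proposition~\ref{InitialLV} applied to $\alpha_0,\alpha_1,\alpha_2$, one of them would be left-veering, contrary to hypothesis. For the second condition, an interior $\bullet$-point $x=\alpha_i\cap\varphi(\alpha_j)$ has four quadrants, and in each pattern exactly two of them are occupied by regions on our list, with opposite orientations -- the other two quadrants lying outside $D$ or belonging to polygons with an edge on $\partial\Sigma$, which are excluded. The collection is non-empty as soon as some $\alpha_i$ is strictly right-veering.

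The hard part will be the bookkeeping: fixing orientation conventions so that ``$\alpha_i$ right-veering'' names the correct exit edge of $P$, enumerating the patterns without redundancy, and in each one listing the regions and verifying that they tile the relevant part of $D$ with the alternating-edge and acute-corner structure. The conceptual content is concentrated in the single appeal to the converse of Proposition~\ref{InitialLV}; the rest is a finite verification.
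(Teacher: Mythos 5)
Your case-analysis skeleton (classifying how the images exit $P$, using the basepoint triangle, invoking Proposition~\ref{InitialLV}) matches the paper's proof in outline, but the two steps that carry the actual content are not established. The structural claim you lean on is false: $D=P\cup\varphi(P)$ need not be a disc. $P$ and $\varphi(P)$ are two discs sharing the three boundary edges of $P$ on $\partial\Sigma$ (plus whatever two-dimensional overlap they have), and their union is in general a planar subsurface with several boundary components, not a disc. Worse, the ``mirror across the $\circ$-points'' principle, which is supposed to make completions automatic, makes no use of right-veeringness at all; if it were valid it would apply verbatim in the setting of Proposition~\ref{InitialLV} and produce a completion of the region $R$ there even when $\alpha_0$ is left-veering, contradicting that proposition. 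The whole point (compare the torus example and Figure~\ref{fig:Two-steps}) is that a completion can fail to exist precisely because the part of the surface where it would have to sit is not a disc; the right-veering hypothesis is what excludes this, and it has to be invoked for \emph{each} positive region, not replaced by a general embeddedness remark.

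The only place right-veeringness enters your argument is the appeal to the converse of Proposition~\ref{InitialLV}, but that proposition concerns a very specific region: a positive region supported in two of the arcs, with $\bullet$-points on $\partial\Sigma$ and an edge on a basepoint triangle. It does not apply to the second positive region that arises when the completion $R'_1$ has an interior $\bullet$-point $x\in\alpha_1\cap\varphi(\alpha_1)$ --- exactly the case where an interior $\bullet$-point must be shown two-sided. The paper handles this by a separate argument: after $x$, the image $\varphi(\alpha_1)$ crosses $\alpha_0$ and must then run parallel to $\varphi(\alpha_0)$ up to the boundary, because $\varphi(P)$ is an embedded hexagon; this simultaneously produces the positive region $R_2$, its completion $R'_2$ inside $\varphi(P)$, and the two-sidedness of $x$. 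Your proposal defers precisely this verification to the mirror principle and to ``finite bookkeeping,'' so the completion of all positive regions and the quadrant count at interior $\bullet$-points are not actually proved. (A smaller point: nonemptiness in the fully degenerate pattern, where all three images are isotopic to the arcs, is not covered by your last sentence, though the paper is also implicitly assuming the images genuinely cross the arcs there.)
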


\begin{proof}
Suppose first that the image of each arc leaves $P$ by forming a basepoint triangle. Then we have an initial positive region $R$ (a $6$-gon) where the $\bullet$-points are endpoints of the arcs and the $\circ$-points are the intersection points where the arc images leave $P$. To see that this is completed, observe that the image of $P$ is again a $6$-gon which is the union of a completing region and the three basepoint triangles, and so we must have a negative region with the same $\circ$-points as $R$ and whose $\bullet$-points are the other endpoints of the arcs, see the top of Figure \ref{fig:TriangleCases}. 

Now suppose that one arc image (we can assume it is $\varphi(\alpha_2)$) leaves $P$ without forming a basepoint triangle (so in this case, by intersecting $\alpha_1$). This forces the image of $\alpha_1$ to leave $P$ by intersecting $\alpha_2$ and creating a basepoint triangle. This immediately gives a positive region $R_1$. This region has a completion $R'_1$, because if it did not, $\alpha_0$ would have to be left-veering by Proposition \ref{InitialLV}. One of the $\bullet$-points is a vertex of the basepoint triangle formed by $\varphi(\alpha_1)$ and the other one is either an endpoint of $\alpha_1$ or an interior point $\alpha_1 \cap \varphi(\alpha_1)$, let us denote it by $x$. In the first case we have that $\alpha_0$ is isotopic to $\varphi(\alpha_0)$, as it cannot be left-veering because it would intersect a restricted edge on $\alpha_1$ but it also cannot be strictly right-veering because $\alpha_0$ with the opposite orientation would also have to be strictly right-veering and it would have to intersect the restricted edge on $\alpha_1$, see the bottom left of Figure \ref{fig:TriangleCases}. 

For the second case we have that, after $x$, $\varphi(\alpha_1)$ intersects $\alpha_0$ and then it must be parallel to $\varphi(\alpha_0)$ until their other endpoint (because the image of $P$ must be a disc). This gives another two regions, a positive one $R_2$ where the $\bullet$-points are $x$ and an endpoint of $\alpha_0$, and its completion $R'_2$, the part of $\varphi(P)$ where $\varphi(\alpha_0)$ and $\varphi(\alpha_1)$ are parallel, see the bottom right of Figure \ref{fig:TriangleCases}. 

\begin{figure}[htp]
    \centering
    \includegraphics[width=10cm]{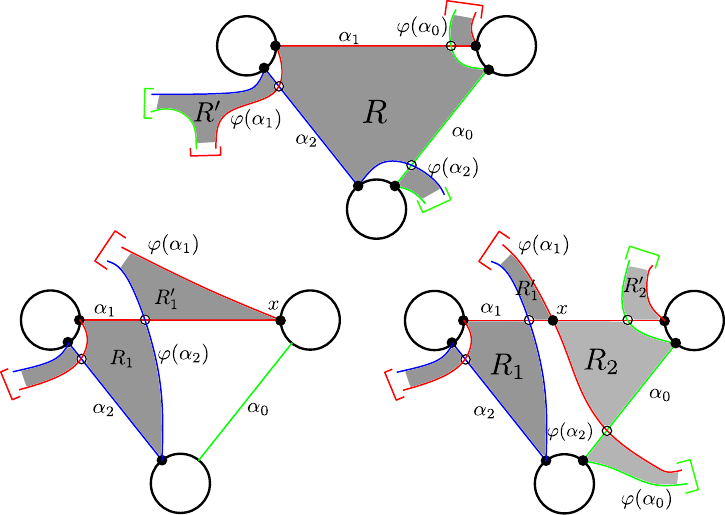}
    \caption[Splitting pair from fixable arc.]{On the top, the positive $6$-gon $R$ and its completion $R'$. On the bottom left, $\alpha_0$ must be isotopic to its image because the image of $P$ must be a disk. On the bottom left, we have two positive regions $R_1$ and $R_2$, and two negative regions $R'_1$ and $R'_2$, with every interior $\bullet$-point belonging to two regions.}
    \label{fig:TriangleCases}
\end{figure}

\end{proof}

We have seen what regions arise in a $6$-gon when all arcs are right-veering, and when one of the arcs is left-veering. Eventually we want to detect a left-veering arc by dividing it into segments that can be fixed by the monodromy and a segment that is left-veering. Thus, now we turn our attention to arc segments that can be fixed by the monodromy.

\begin{definition} \label{ContainedArcDef}
Let $(\Sigma, \varphi)$ be an open book. If two arcs  $\alpha_1, \alpha_2$ that cut out a $6$-gon $P$ with a third arc $\alpha_0$ (oriented counterclockwise) support a pair of regions $\{R_1, R'_1\}$ as in the second case of Proposition \ref{RV} (i.e there is a positive region $R_1$ in $P$ where one of the sides is a side of the basepoint triangle, and it is completed by a region $R_1'$ with a $\bullet$-point in the interior of $\alpha_1$) we say that $\alpha_2$ is \emph{$\varphi$-contained} in $\alpha_1$ and we call $\{ R_1, R'_1 \}$ a \emph{positive splitting pair}. We also call $\{R_2,R'_2\}$ a \emph{negative splitting pair}, and we also say that $\alpha_0$ is \emph{$\varphi$-contained} in $\alpha_1$.
\end{definition}

\begin{definition} \label{Fixable}
Let $\Gamma$ be an arc collection in an open book $(\Sigma, \varphi)$, and $x, y \in \Gamma \cap \varphi(\Gamma)$ be two points (which could be on the boundary or interior points) that are fixed by some representative of $\varphi$ . Let $\gamma$ be an arc segment starting in  $x$ and ending in $y$. We say $\gamma $ is \emph{fixable by $\varphi$} if there is a representative of $\varphi$ that fixes it relative to $\partial \gamma$.
\end{definition}

This means that $\gamma $ and $\varphi(\gamma)$ bound a collection of bigons that intersect only on points $\gamma \cap \varphi(\gamma)$, see Figure \ref{fig:FixableArc} for an example.

\begin{remark}
    If both endpoints of an arc segment are fixed by $\varphi$, properties like being right- or left-veering can be defined as for properly embedded arcs. 
\end{remark}

 \begin{figure}[htp]
    \centering
    \includegraphics[width=7cm]{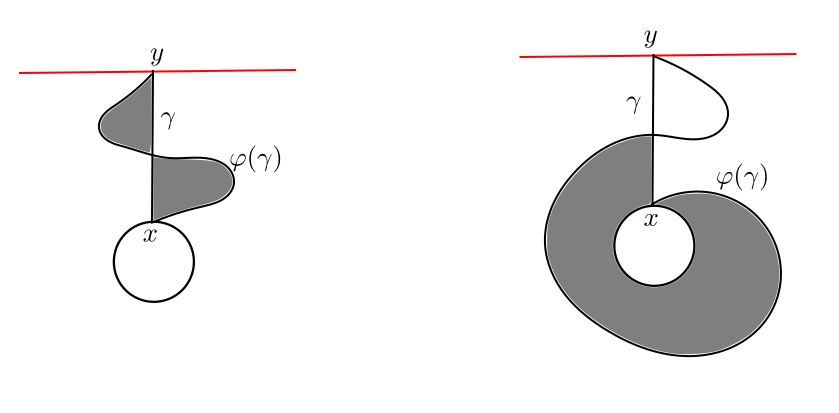}
    \caption[Fixable arcs.]{On the left, $\gamma $ is fixable because we can isotope $\varphi(\gamma)$ relative to its endpoints to coincide with $\gamma$. On the right, $\gamma$ is not fixable.} 
    \label{fig:FixableArc}
\end{figure}

In practice, we want our fixed points to come from intersections $\alpha \cap \varphi(\alpha)$ where $\alpha$ is an arc of a chosen basis.

Similarly to Proposition \ref{InitialLV}, if we have $3$ arcs cutting out a $6$-gon, we can use regions to detect a fixable arc inside this $6$-gon.

\begin{proposition} \label{InitialFixed}
Let $(\Sigma, \varphi)$ be an open book, and let $\alpha_0, \alpha_1, \alpha_2$ be properly embedded arcs cutting out a $6$-gon $P$ from $\Sigma$, oriented counterclockwise, and assume they are strictly right-veering. Let $\gamma$ be an arc segment contained in $P$ starting on $\partial \Sigma$ between $\alpha_2$ and $\alpha_0$ and ending in an intersection point $ x= \alpha_1 \cap \varphi(\alpha_1)$ in the interior of $\alpha_1$. Then $\gamma$ is fixable by $\varphi$ if and only if $\{\alpha_1, \alpha_2\}$ support a positive splitting pair $\{R,R'\}$ such that the $\bullet$-point of $R'$ in the interior of $\Sigma $ is $x$.
\end{proposition}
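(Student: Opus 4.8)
The plan is to read both implications off the case analysis of Proposition~\ref{RV}, matching the statement ``$\gamma$ is fixable'' with exactly the sub-case that produces a positive splitting pair, namely the one in which an arc image re-enters $P$ and crosses $\alpha_1$ in the interior of a side of $P$. I would start with the ``if'' direction, since it is the more constructive one. Assume $\{\alpha_1,\alpha_2\}$ support a positive splitting pair $\{R,R'\}$ whose interior $\bullet$-point is $x$. By Definition~\ref{ContainedArcDef} this is precisely the configuration of the second case of Proposition~\ref{RV} in its ``interior'' sub-case: $\varphi(\alpha_1)$ forms a basepoint triangle with $\alpha_2$, then crosses $\alpha_1$ at $x$, and afterwards runs parallel to $\varphi(\alpha_0)$, while $\varphi(\alpha_2)$ leaves $P$ through $\alpha_1$. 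In this explicit picture the part of $P$ lying between $\gamma$ and $\varphi(\gamma)$ is (a union of translates of) the region $R$, subdivided along the relevant arc images into embedded bigons; the completion $R'$, whose interior $\bullet$-point is \emph{precisely} $x$ rather than a crossing further along $\varphi(\alpha_1)$, is exactly what forces this chain of bigons to terminate at $x$. Collapsing the bigons produces a representative of $\varphi$ that fixes $\partial\Sigma$ and the point $x$ and carries $\varphi(\gamma)$ to $\gamma$ relative to $\partial\gamma$; that is, $\gamma$ is fixable.

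For the ``only if'' direction, suppose $\gamma$ is fixable and fix a representative of $\varphi$ fixing $x$ and fixing $\gamma$ relative to $\partial\gamma$, so that $\gamma$ and $\varphi(\gamma)$ cobound a chain of bigons. Since $x=\alpha_1\cap\varphi(\alpha_1)$ lies in the interior of $\alpha_1$ and, because $\gamma\subset P$, on the side of $P$ contained in $\alpha_1$, the image $\varphi(\alpha_1)$ crosses that side. First I would exclude the first case of Proposition~\ref{RV}: there each arc image leaves $P$ only through a basepoint triangle, and any additional crossing of $\varphi(\alpha_1)$ with a side of $P$ (needed to reach $x$) combines with those triangles into an incomplete positive region, which by Proposition~\ref{InitialLV} would make $\alpha_0$ left-veering, contradicting the hypothesis. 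Hence we are in the second case, producing a positive region $R_1$ with an edge on a basepoint triangle, and by Proposition~\ref{InitialLV} again (using that $\alpha_0$ is right-veering) $R_1$ has a completion $R_1'$. It remains to identify the interior $\bullet$-point of $R_1'$ with $x$: it cannot be an endpoint of $\alpha_1$, since in that sub-case $\varphi(\alpha_1)$ runs boundary-parallel to $\alpha_1$ after the basepoint triangle and the chain of bigons bounded by $\gamma$ could not close up at an interior point, contradicting fixability; so it is an interior intersection $\alpha_1\cap\varphi(\alpha_1)$, and it must be the specific point $x$ at which $\gamma$ ends, because $\gamma$ and $\varphi(\gamma)$ cobounding bigons forces $\varphi(\alpha_1)$ to reach $x$ immediately after the basepoint triangle. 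Thus $\{R_1,R_1'\}$ is the required positive splitting pair.

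The step I expect to be the main obstacle is making rigorous, in both directions, the translation between the region/splitting-pair combinatorics and an honest isotopy of $\varphi$ fixing $\gamma$ relative to its endpoints: in the ``if'' direction one must check that the regions genuinely assemble into \emph{embedded} bigons between $\gamma$ and $\varphi(\gamma)$ and that collapsing them keeps $\partial\Sigma$ and the point $x$ fixed --- here the completedness of $R$, not its mere existence, is doing the work, exactly as completion distinguishes fixability from left-veeringness in Propositions~\ref{InitialLV} and~\ref{RV} --- while in the ``only if'' direction one must carefully rule out the endpoint sub-case and the first case of Proposition~\ref{RV}, using minimal position of the arcs with their images together with strict right-veeringness via Proposition~\ref{InitialLV}.
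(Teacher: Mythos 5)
Your proposal has the right skeleton (splitting pair $\Leftrightarrow$ fixability, via the case analysis of Proposition~\ref{RV} and Proposition~\ref{InitialLV}), but both implications have genuine gaps at exactly the points where the paper's proof does its work. In the ``if'' direction you assert that ``the part of $P$ lying between $\gamma$ and $\varphi(\gamma)$'' is a chain of embedded bigons which can be collapsed; but you have no prior control over $\varphi(\gamma)$ at all, so this presupposes the conclusion. The paper's mechanism is the notion of \emph{restricted} edges: the edge of $R'$ on $\alpha_1$ is restricted (an arc image disjoint from the associated disc crossing it would create a bigon, contradicting minimal position), so $\gamma$ cannot be left-veering; and by Proposition~\ref{RV}, after the interior $\bullet$-point of $R'$ the images $\varphi(\alpha_1)$ and $\varphi(\alpha_0)$ run parallel to the boundary, producing a rectangle whose edge on $\alpha_0$ is also restricted, so $\gamma$ cannot be strictly right-veering at its starting point either; hence it is fixable (with the additional care that $\varphi(x)$ need not equal $x$ on the nose, fixed by sliding along $\varphi(\alpha_1)$). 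You flag this as ``the main obstacle'' but do not supply the argument, and nothing in your sketch substitutes for it.

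In the ``only if'' direction your exclusion of the first case of Proposition~\ref{RV} does not work: the existence of the interior intersection $x\in\alpha_1\cap\varphi(\alpha_1)$ is a standing hypothesis of the statement, independent of fixability, and it does not by itself produce a positive region with an edge on a basepoint triangle and no completion --- with all three images forming basepoint triangles, $\varphi(\alpha_1)$ can perfectly well wander back and cross $\alpha_1$ far from $P$ without contradicting Proposition~\ref{InitialLV}. The place where fixability must enter (and where the paper uses it) is earlier: since $\gamma$ is fixed rel endpoints and $\varphi(\alpha_2)$ is disjoint from $\varphi(\gamma)$, the arc $\gamma$ blocks every exit of $\varphi(\alpha_2)$ from $P$ except through $\alpha_1$; this forces $\varphi(\alpha_1)$ to leave $P$ through $\alpha_2$ forming the basepoint triangle and hence the positive region $R$, whose completion $R'$ then exists because $\alpha_0$ is strictly right-veering (Proposition~\ref{InitialLV}), and the interior (rather than endpoint) position of the $\bullet$-point of $R'$, and its identification with $x$, again follow from strict right-veeringness of $\alpha_0$ together with the fixed $\gamma$ --- not from bigon-chain language left unexplained. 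So the decomposition into cases is the same as the paper's, but the causal structure of your argument (existence of $x$ forcing the second case of Proposition~\ref{RV}) is wrong, and the constraining role of restricted edges is missing.
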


\begin{proof}
First assume that $\gamma$ is fixable. Then the image of $\alpha_2$ must leave $P$ by intersecting $\alpha_1$, which means that the image of $\alpha_1$ leaves $P$ by intersecting $\alpha_2$ (and forming a basepoint triangle), giving the positive region $R$. Since we are assuming that $\alpha_0$ is strictly right-veering, this region must be completed by a region $R'$, giving the positive splitting pair. 

\begin{figure}[htp]
    \centering
    \includegraphics[width=5cm]{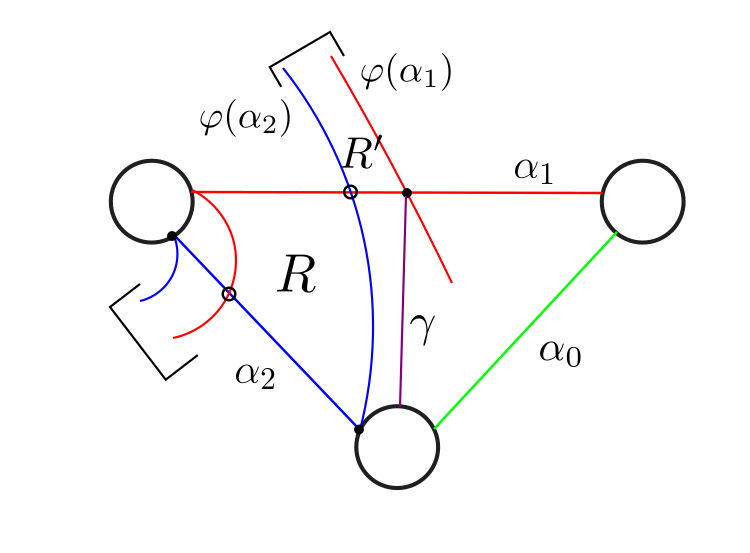}
    \caption[Splitting pair from fixable arc.]{The regions $R$ and $R'$ when $\gamma$ is fixable.}
    \label{fig:TriangleFixed}
\end{figure}

Conversely, suppose that we have a positive splitting pair $\{R, R'\}$. Then $\gamma$ cannot be left-veering, because it would have to intersect the edge of $R'$ on $\alpha_1$, which is restricted. However, by Proposition \ref{RV}, after the splitting pair (that is, after the interior $\bullet$-point of $R'$) $\varphi(\alpha_1)$ and $\varphi(\alpha_0)$ must be parallel up to the boundary, which means that they form a rectangle with $\alpha_0$ and an edge of a basepoint triangle on $\alpha_1$. Thus the edge of this rectangle on $\alpha_0$ is also restricted. If $\gamma$ were strictly right-veering in its starting point, it would have to intersect this restricted edge. Thus $\gamma$ must be fixable by $\varphi$ (strictly speaking, we might not have that $\varphi(x) = x$, however, in this case, by the same argument, $\gamma, \varphi(\gamma)$, and $\varphi(\alpha_1)$ must bound a disc, and thus we may isotope $\varphi(x)$ by sliding it along $\varphi(\alpha_1)$ to coincide with $x$ and then $\gamma $ is fixable). 

 \begin{figure}[htp]
    \centering
    \includegraphics[width=5cm]{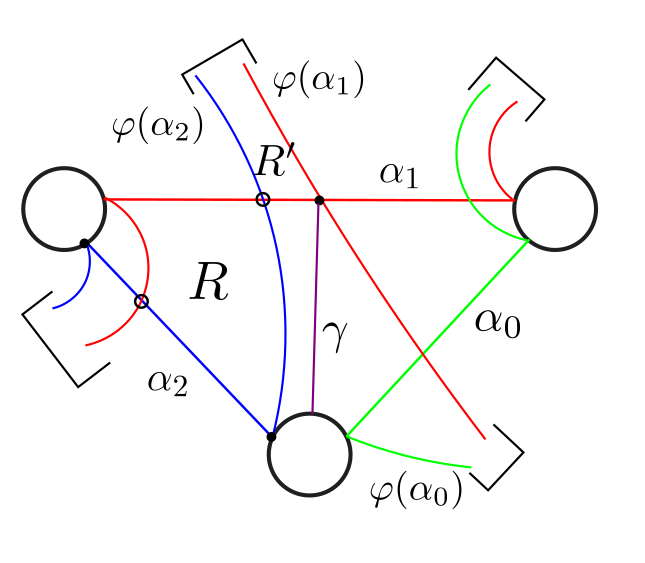}
    \caption[Fixable arc from splitting pair.]{Once we have $R$ and $R'$, the image of $\gamma$ cannot go to either side because the edges on $\alpha_0$ and $\alpha_1$ are restricted.}
    \label{fig:Split6-gon}
\end{figure}
\end{proof}

\section{Extended towers} \label{Towers}
We now introduce extended towers, which will be our main tool for detecting left-veering arcs. They are inspired by the notion of towers introduced in \cite{DetectingTightness}. However, there are some differences. Towers were defined to detect tightness, and we aim to detect left-veering arcs. 
Both concepts are related but not equivalent, and the new features of extended towers reflect this. Indeed, incomplete towers in the sense of \cite{DetectingTightness} are meant to show that the supported contact structure is overtwisted, while incomplete extended towers in our sense are meant to show that there exists a left-veering arc. Of course, by \cite[Theorem 1.1]{Right-veering} this implies overtwistedness. However, because the converse is not true \cite[Proposition 6.1]{Right-veering}, and there are right-veering open books supporting overtwisted contact structures, this means we can find open books where we can find incomplete towers but not incomplete extended towers, as we will show in Example~\ref{ex:GlobalExample}. In particular, our notions of being \emph{replete} and \emph{completed} will differ from those in \cite{DetectingTightness}, and we introduce the notion of \emph{niceness}.

From now on, let $\Gamma$ be an arc collection in a surface with boundary $\Sigma$, and $\alpha_0$ a properly embedded arc, disjoint from $\Gamma$, such that $\Gamma \cup \{ \alpha_0 \}$ cuts out a disc $P$. Moreover, orient the arcs in the standard counterclockwise orientation of $\partial (P)$.

\begin{definition}
For a given arc collection $\Gamma$ in an open book $(\Sigma, \varphi)$, we denote the set of regions supported in $\Gamma$ by $\mathcal{R}(\Sigma, \varphi, \Gamma)$. Moreover, for any collection of regions $\mathcal{A}$, the set of positive regions in $\mathcal{A}$ is denoted by $\mathcal{A}^+$, and, similarly, the set of negative regions of $\mathcal{A}$ is denoted by $\mathcal{A}^-$.
\end{definition}

\begin{definition}
An \emph{extended tower} in $(\Sigma, \varphi, \Gamma)$ is a (nonempty) collection $\mathcal{T} \subset \mathcal{R}(\Sigma, \varphi, \Gamma)$ where $\textrm{Dot}(\mathcal{T}) \subset (\textrm{Dot}(\mathcal{T}^-) \cup \partial \Sigma)$, $\textrm{Circ}(\mathcal{T}) \subset (\textrm{Circ}(\mathcal{T}^+)  \cup \textrm{Circ}_{\partial}(\Gamma))$, and for all pairs $A,B \in \mathcal{T}$, no corner of $A$ is contained in the interior of $B$. We say that $\Gamma$ \emph{supports} $\mathcal{T}$.
\end{definition}

 We can see an example of two extended towers in Figure \ref{fig:RunningExample}. These will be our primordial examples, and while we will provide other, simpler examples for some of the properties later on, we will often come back to these ones to illustrate why all the properties are needed. We will see later that the existence of the extended tower on the left implies that the arc segment $\gamma$ is fixable, while the existence of the extended tower on the right implies that the arc $\alpha_5$ is left-veering.

    \begin{figure} [htp]
        \centering
        \includegraphics[width=12cm]{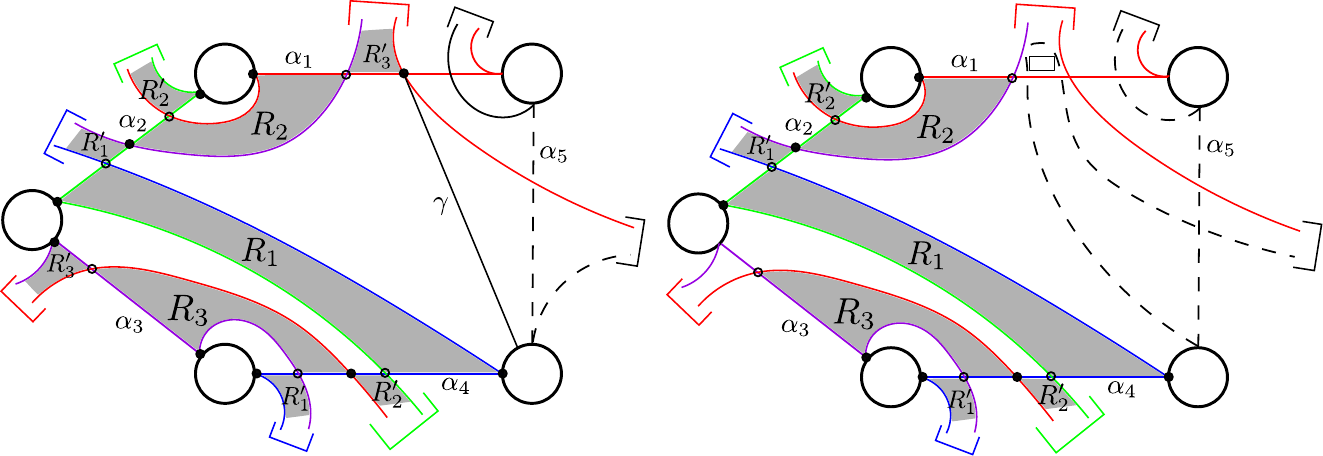}
        \caption[Primordial example.]{Two extended towers which will be our primordial examples to showcase the different properties that we will require. The black box on the right hand side represents that there is no bigon that can be used to isotope $\varphi(\alpha_5)$ so that it is right-veering (it also means that there is no region using the $\circ$-points of $R_2$ and $R_3$).}
        \label{fig:RunningExample}
    \end{figure}

\begin{definition} \label{Replete}
An extended tower $\mathcal{T}$ in $(\Sigma, \varphi, \Gamma)$ is \emph{replete} if whenever there is a region $A \in \mathcal{R}^-(\Sigma, \varphi, \Gamma)$ which satisfies $\textrm{Circ}(A) \subset \textrm{Circ}(\mathcal{T}) \cup \textrm{Circ}_{\partial}(\Gamma)$, and $\mathcal{T} \cup A$ is again an extended tower, then $A \in \mathcal{T}$. All extended towers will be assumed replete unless otherwise stated.
\end{definition}

The main difference with towers from \cite{DetectingTightness} is that here we allow negative regions with $\circ$-points in $\textrm{Circ}_{\partial}(\Gamma)$ as well as $\textrm{Circ} (\mathcal{T}^+)$. This is because we want to detect left-veering arcs rather than tightness, and we can see this difference in the following example. 

\begin{example} \label{ex:GlobalExample}

Let $(\Sigma, \varphi)$ be the open book specified by Figure \ref{fig:GlobalExample}, with $\Gamma$ the two straight arcs. This open supports an overtwisted contact structure \cite{Lisca}, and indeed this is detected by the fact that $\mathcal{T} = \{ R \}$ forms an incomplete tower in the sense of \cite{DetectingTightness}. However, as an extended tower, $\mathcal{T}$ is not replete, since $\mathcal{T} \cup \{ R'\}$ is again an extended tower, and $\textrm{Circ}(R') \subset \textrm{Circ}(\mathcal{T}) \cup \textrm{Circ}_{\partial}(\Gamma)$. Indeed, there can be no incomplete extended tower since this open book is right-veering \cite{Lisca}.
 \begin{figure}[htp]
    \centering
    \includegraphics[width=5cm]{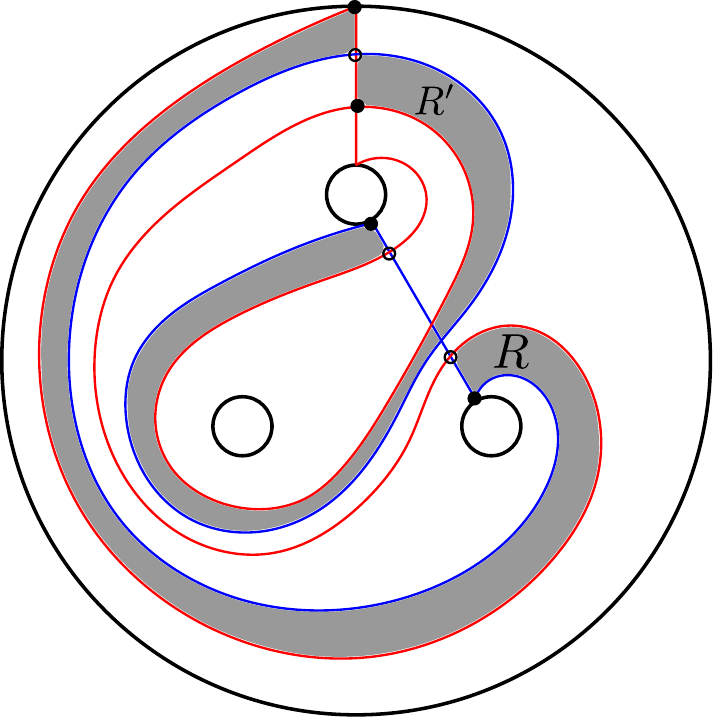}
    \caption[Difference between towers and extended towers.]{The region $R$ forms an incomplete tower but is not a replete extended tower because we can add the region $R'$.}
    \label{fig:GlobalExample}
\end{figure}

\end{example}

The importance of this definition can now be seen in the extended towers from Figure \ref{fig:RunningExample}. We wanted the extended tower $\{ R_1, R'_1, R_2, R'_2, R_3 \}$ on the right side of the figure to correspond to the arc $\alpha_5$ being left-veering. However, if we do not impose the repleteness condition, we can also consider the extended tower $\{ R_1, R'_1, R_2, R'_2, R_3 \}$ on the left side of the figure, and there the arc $\alpha_5$ is not left-veering (if it were, it would have to intersect the edges on $R'_3$ or $R'_1$ which are both restricted). Therefore we need to impose the condition that we have to add the region $R'_3$ to this extended tower if possible.

We now impose some further restrictions on our extended towers so that they completely characterise fixable and left-veering arcs.

\begin{definition} \label{Nice}
An extended tower $\mathcal{T}$ in $(\Sigma, \varphi, \Gamma)$, with $\Gamma$ an arc collection as above, is \emph{nice} if for every region $A \in \mathcal{T}^+$ we have $A \subset P$ and for every region $B \in \mathcal{T}^-$ we have that $int(B)$ is disjoint from $\varphi(\Gamma)$. We will assume all extended towers are nice.
\end{definition}

\begin{remark}
    This is not a notion that appears in the original tower technology. We want to consider only nice extended towers because extended towers that are not nice come from collections where there are no left-veering arcs and no fixable arcs. See for example the extended tower in Figure~\ref{fig:NotNice}, it is not nice but conforms to the definition of completed extended tower that we will see next (which is the one we want to identify with fixable arcs, and indeed there is no fixable arc in Figure~\ref{fig:NotNice}). The extended towers in Figure \ref{fig:RunningExample} on the other hand, are indeed nice, and they do correspond to a fixable arc segment and a left-veering arc.
\end{remark}

 \begin{figure}[htp]
    \centering
    \includegraphics[width=5cm]{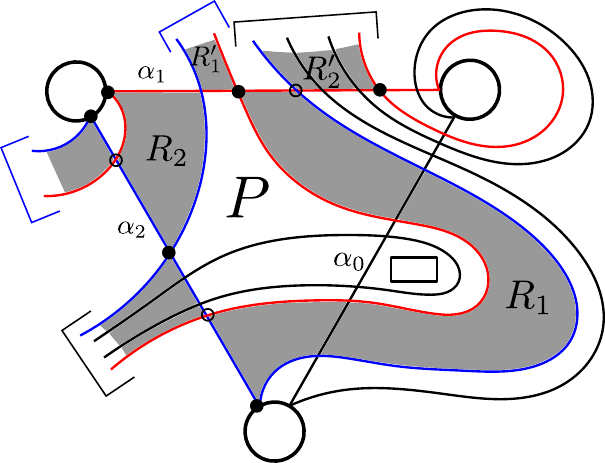}
    \caption[A non-nice extended tower.]{The extended tower $\mathcal{T} = \{ R_1, R_2, R'_1, R'_2 \}$ supported in $\alpha_1, \alpha_2$ is not nice because $R_1$ is a positive region that is not contained in $P$.}
    \label{fig:NotNice}
\end{figure}

\begin{definition} \label{Nested}
    Let $\Gamma$ be an arc collection in an open book $(\Sigma, \varphi)$ and $\mathcal{T}$ an extended tower in $\Gamma$. We say $\mathcal{T}$ is \emph{nested} if there exist nested subcollections of regions as follows.

    \begin{itemize}
        \item $\mathcal{T}^+_0 = \{ R \in \mathcal{T}^+  \mid \textrm{Dot}(R) \subset \partial \Sigma \}$.

        \item $\mathcal{T}^-_0 = \{ R' \in \mathcal{T}^-  \mid \textrm{Circ}(R') \subset \mathcal{T}^+_0 \cup \textrm{Circ}_{\partial}(\Gamma) \}$.
        
        \item $\mathcal{T}^+_i = \{ R \in \mathcal{T}^+  \mid \textrm{Dot}(R) \subset \mathcal{T}^-_{i-1} \cup \partial \Sigma \}$.

        \item $\mathcal{T}^-_i = \{ R' \in \mathcal{T}^-  \mid \textrm{Circ}(R) \subset \mathcal{T}^+_{i} \cup \textrm{Circ}_{\partial}(\Gamma)  \}$.

        \item $\mathcal{T}^+ = \bigcup_{i} \mathcal{T}^+_i $ and $\mathcal{T}^- = \bigcup_{i} \mathcal{T}^-_i $
    \end{itemize}

    We will assume all extended towers are nested, and we will refer to regions in $\mathcal{T}^{\pm}_i$ as \emph{being in level i}.
\end{definition}

\begin{remark}
    For the arc collections $\Gamma$ as above, a necessary condition for an extended tower to be nested is the existence of a level $0$ positive region. Indeed, without a level $0$ positive region the only possibility for a level $0$ negative region would be one where all $\circ$-points are on basepoint triangles. However, this implies that the arcs supporting this negative region cut out a disc, contradicting the conditions we required for $\Gamma$. 
    
    Therefore, we can see that the extended tower from Figure \ref{fig:NotNice} is not nested as there is no level $0$ positive region. Observe that in terms of Heegaard Floer homology, a level $0$ positive region corresponds to a differential to the contact class. We can also see an example of a nice extended tower that is not nested in Figure \ref{fig:WeirdCompletedTower}.

    However, the extended towers in Figure \ref{fig:RunningExample} are indeed nested. There is a unique level $0$ positive region which is $R_1$, and a unique level $0$ negative region $R'_1$, because its $\circ$-points are on $R_1$ and on a basepoint triangle. Then there is a unique level $1$ positive region $R_2$ because the $\bullet$-point that is not on the boundary belongs to $R'_1$, and a unique level $1$ negative region $R'_2$ because its $\circ$-points belong to $R_1$ and $R_2$. Finally, there is a unique level $2$ positive region $R_3$ because the $\bullet$-point that is not on the boundary belongs to $R'_2$. On the right hand side there are no more regions, but on the left hand side there is a level $2$ negative region $R'_3$ because its $\circ$-points belong to $R_2$ and $R_3$.
\end{remark}

  \begin{figure} [htp]
     \centering
     \includegraphics[width=5cm]{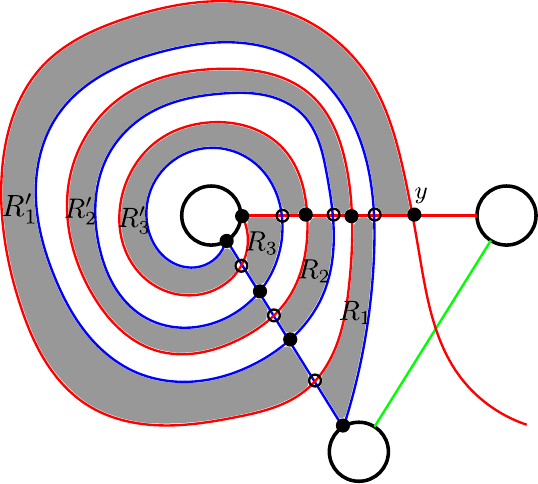}
     \caption[Non-nested extended tower.]{An extended tower that is not nested, because there is no level $0$ positive region.}
     \label{fig:WeirdCompletedTower}
 \end{figure}

\begin{definition}

Let $\mathcal{T}$ be an extended tower in $(\Sigma,\varphi,\Gamma)$, and let $x$ be an interior point of some $\alpha \in \Gamma$ that is a vertex of a region in $\mathcal{T}$. We say that $x$ is \emph{two-sided} if it is a vertex of exactly two regions of $\mathcal{T}$ (one positive and one negative).

\end{definition}

\begin{definition} \label{Completed}
Let $\mathcal{T}$ be an extended tower in $(\Sigma,\varphi, \Gamma = \{ \alpha_i \}_{i = 1} ^ n)$, where $\Gamma \cup \{\alpha_0\}$ cuts out a disc $P$ for some properly embedded arc $\alpha_0$ disjoint from $\Gamma$, and the arcs are oriented labelled counterclockwise. We say that $\mathcal{T}$ is \emph{completed} if every interior vertex of $\mathcal{T}$ is two-sided, with the exception of a single $\bullet$-point $y_0 \in \alpha_1 \cap \varphi(\alpha_1)$, which we call a \emph{connecting vertex}.
\end{definition}

\begin{figure}[htp]
    \centering
    \includegraphics[height=6cm]{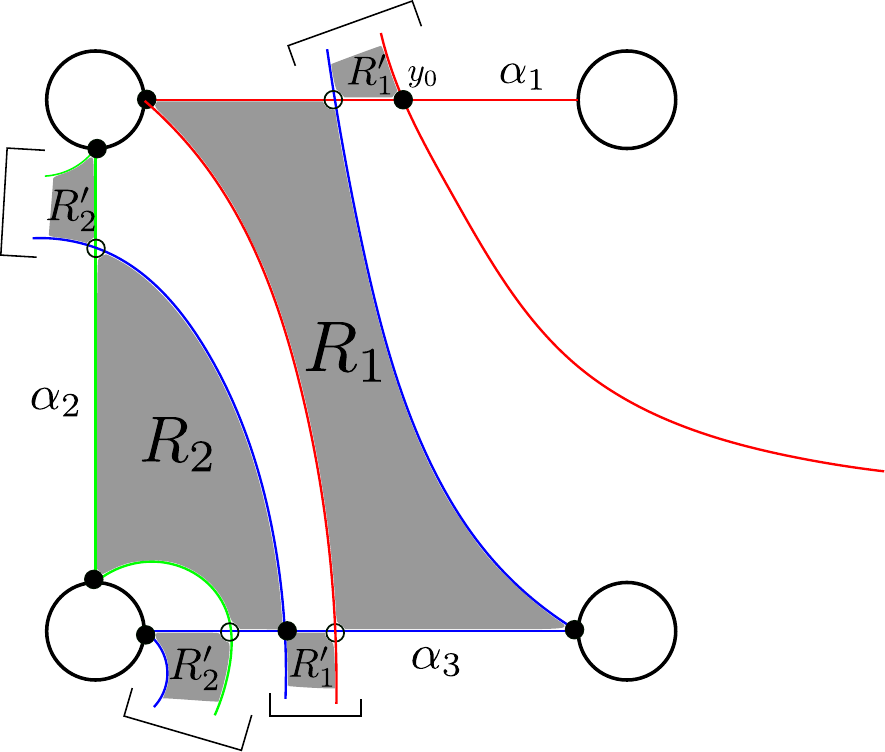}
    \caption[Completed extended tower.]{A simple example of a completed extended tower, which is also nice and replete, where the only interior vertex that is not two-sided is the $\bullet$-point $y_0 \in \alpha_1 \cap \varphi(\alpha_1)$.}
    \label{fig:CompletedTower}
\end{figure}

% \begin{remark}
%     We do not include in this definition the case where every interior vertex is two-sided. In this setup, this would mean that the arc $\alpha_0$ is fixable, and we do not want to detect fixable arcs, but fixable arc segments, so that we can join them to a left-veering arc segment to obtain a left-veering arc.
% \end{remark}

 This is a much more restricted notion than that of completed tower in \cite{DetectingTightness}. The reason for this is we want completed extended towers to correspond exactly to fixable arc segments, and to determine them uniquely. It is also clear that completed extended towers are those where every point of every arc $\alpha_2, \dots, \alpha_n$ (as long as they are all strictly right-veering) belongs to a region, and every point on $\alpha_1$ from  $y_0$ to $\alpha_1(1)$ also belongs to a region.

 Once again, if we return to our primordial example in Figure \ref{fig:RunningExample}, we can see that the extended tower on the left is completed. Indeed, every interior vertex is two-sided with the exception of a vertex $\alpha_1 \cap \varphi(\alpha_1)$, and the arc segment $\gamma$ is fixable. However, the extended tower on the right is not completed, as there are interior $\circ$-points on $R_2$ and $R_3$ that are not two-sided. In this case there is no fixable arc segment (and indeed the arc $\alpha_5$ is left-veering).

\begin{definition} \label{Incomplete}
Let $\mathcal{T}$ be an extended tower in $\Gamma$. We say that $\mathcal{T}$ is \emph{incomplete} if

for every negative region $A \in \mathcal{T}^-$, there exists a vertex $x \in \textrm{Dot}(A)$ that is two-sided, i.e. there exists a positive region $B \in \mathcal{T}^+$ such that $x \in \textrm{Dot}(B)$. 
\end{definition}

\begin{figure}[htp]
    \centering
    \includegraphics[height=5cm]{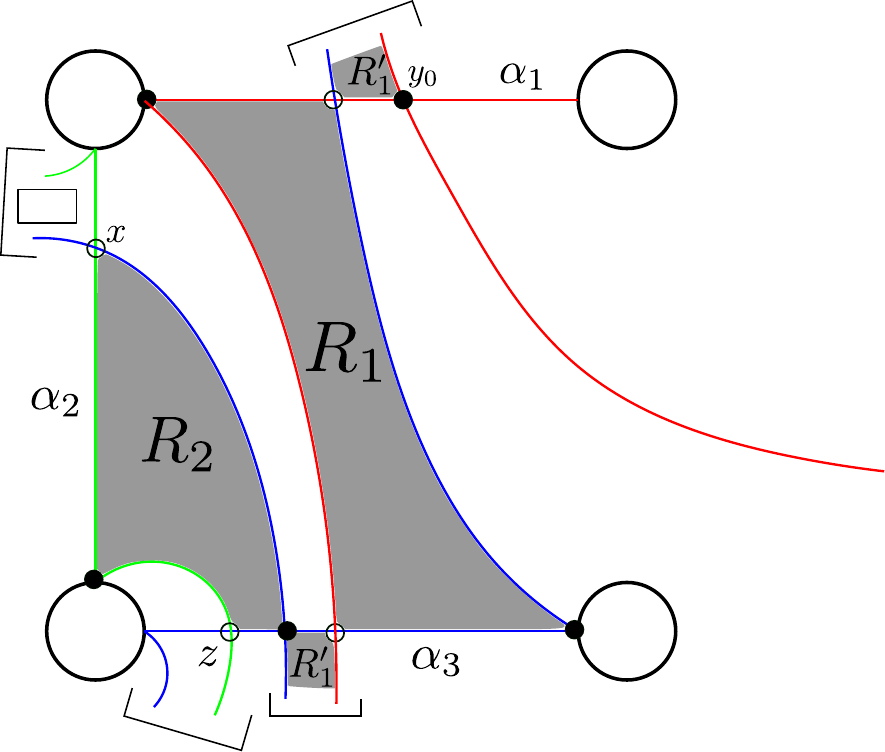}
    \caption[Incomplete extended tower.]{A simple example of an incomplete extended tower, because there is only one negative region $R'_1$, which shares a $\bullet$-point with $R_2$, and the $\circ$-points $x$ and $z$ are not two-sided.}
    \label{fig:IncompleteTower}
\end{figure}

Once again returning to our primordial example in Figure \ref{fig:RunningExample}, the extended tower on the right is incomplete, because every negative region has a $\bullet$-point which is two-sided. The one on the left, however, is not incomplete since the region $R'_3$ is a negative region without any two-sided $\bullet$-points.

This mirrors the definition of incomplete tower from \cite{DetectingTightness}, because the property it aims to detect, a left-veering arc, implies overtwistedness. The additional conditions imposed to extended towers are what distinguishes this from the definition of incomplete tower. We could see this in Example \ref{ex:GlobalExample}, where the change in the definition of repletedness implied that $\mathcal{T} = \{ R \}$ forms a replete incomplete tower, but not a replete extended tower.

\begin{remark}
    As evidenced by Figure \ref{fig:GlobalExample} in Example \ref{ex:GlobalExample}, we can have extended towers that are neither completed nor incomplete, for instance, if not every interior vertex is two-sided but there exists a negative region with no $\bullet$-points in common with any positive region. An extended tower where every interior $\bullet$-point is two-sided is also neither completed nor incomplete. It will follow from our discussion later that in this case the arc $\alpha_0$ is fixable, but we want to detect fixable arc segments rather than arcs and so we exclude this case from our definition of completed extended tower.
\end{remark}

In Figure \ref{fig:TowerExceptions} we can see two extended towers that are neither completed nor incomplete. On the left, $\mathcal{T}_1 = \{ R, R' \}$ is not incomplete because the negative region $R'$ does not have any $\bullet$-point in common with the unique positive region $R$, but is also not completed because there is a $\bullet$-point that is not two-sided and is not of the form $\alpha \cap \varphi(\alpha)$. On the right, $\mathcal{T}_2 = \{ R_1, R'_1, R_2, R'_2 \}$ is not completed nor incomplete because every interior vertex is two-sided, and $R'_2$ does not share a $\bullet$-point with a positive region.

\begin{figure}[htp]
    \centering
    \includegraphics[height=4cm]{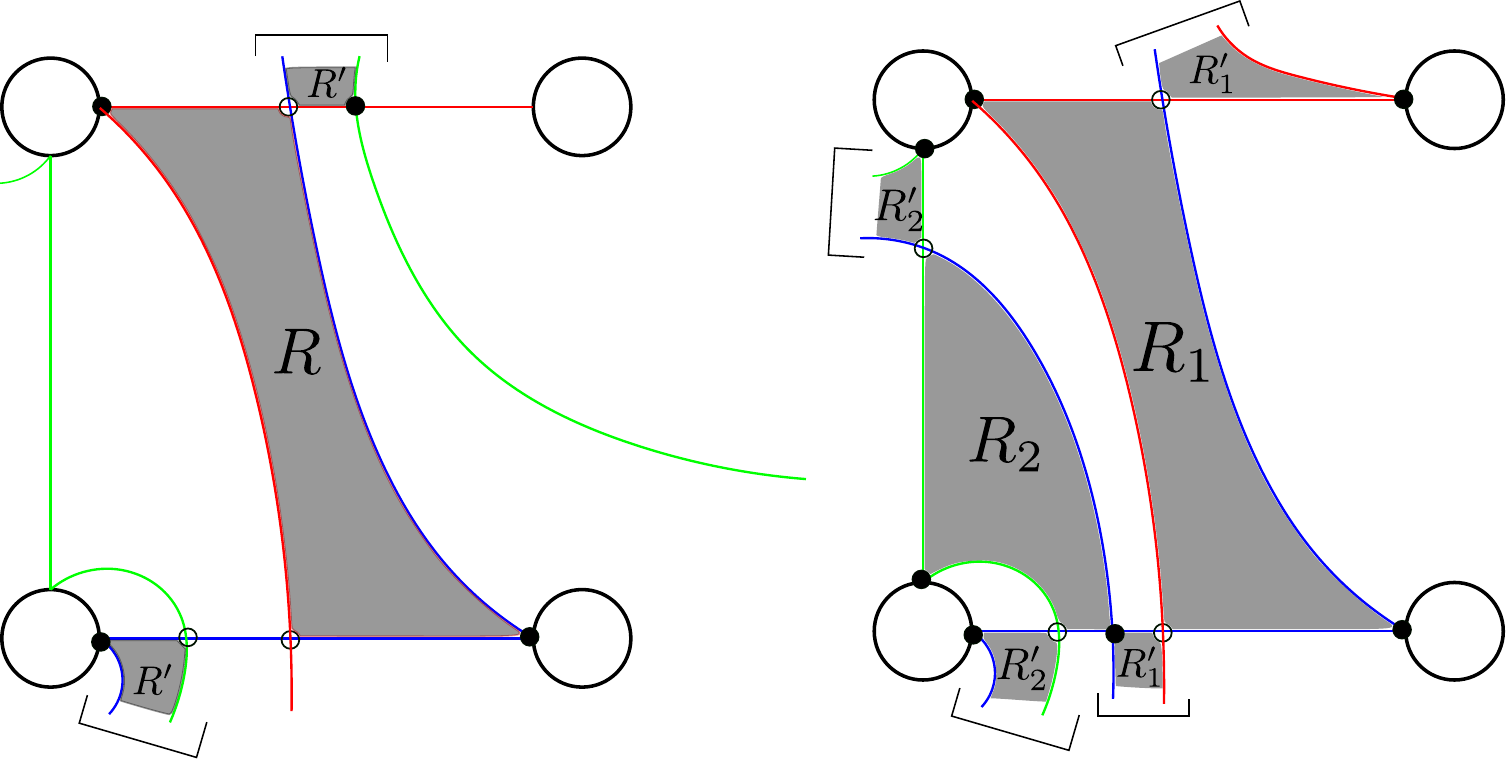}
    \caption[Non-completed, non-incomplete extended towers]{Two examples of extended towers which are neither completed nor incomplete.}
    \label{fig:TowerExceptions}
\end{figure}

\section{Results} \label{Results}

In this section we prove that we can detect a left-veering arc using extended towers. First we show existence of a special type of a left-veering arc that will be easier to detect.

\subsection{Minimal left-veering arcs} \label{Minimal Arcs}

\begin{definition} \label{ShortenedArc}

Let $(\Sigma, \varphi)$ be an open book, and let $\mathcal{B}$ be a basis for $\Sigma$. Let $\gamma$ be a properly embedded arc, which we may assume intersects the basis. This divides $\gamma$ into a collection of arc segments $\gamma_1, \dots , \gamma_n$, labelled and oriented following the orientation of $\gamma$, which intersect $\mathcal{B}$ only on their endpoints. We say $\gamma$ is \emph{shortened with respect to $\mathcal{B}$} if $\gamma_1, \dots, \gamma_{n-1}$ are fixable. If $\gamma$ is left-veering, we call it a \emph{shortened left-veering arc with respect to $\mathcal{B}$}.

\end{definition}

Note that a shortened arc $\gamma$ as in Definition \ref{ShortenedArc} is left-veering if and only if $\gamma_n$ is left-veering. Indeed, since $\gamma_1, \dots, \gamma_{n-1}$ are fixable, we can take a representative of the isotopy class of $\gamma$ so that it is pointwise fixed in $\gamma_1, \dots, \gamma_{n-1}$, and then the veeringness of $\gamma$ is completely determined by the veeringness of $\gamma_n$.

\begin{lemma} \label{ShortenedLV}
Let $(\Sigma, \varphi)$ be an open book, and let $\mathcal{B}$ be a basis for $\Sigma$. Suppose there exists a left-veering arc $\gamma$ in $(\Sigma, \varphi)$. Then there exists a shortened left-veering arc $\gamma'$ with respect to $\mathcal{B}$.

\end{lemma}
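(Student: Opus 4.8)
The plan is to prove this by a minimality argument. Among all left-veering arcs in $(\Sigma,\varphi)$ I would choose one, $\gamma'$, meeting $\mathcal B$ in the fewest points, and put $\gamma'$ and $\varphi(\gamma')$ in minimal position; write $\gamma'=\gamma_1\cup\dots\cup\gamma_n$ for the decomposition into the segments cut out by $\mathcal B$, with $p_i=\gamma_i\cap\gamma_{i+1}$ lying on some $\alpha^{(i)}\in\mathcal B$. The claim is that such a $\gamma'$ is automatically shortened, which is exactly what is wanted. So suppose it is not, and let $j<n$ be the least index with $\gamma_j$ not fixable. Then $\gamma_1,\dots,\gamma_{j-1}$ are fixable, and by concatenating the isotopies that fix them we may assume $\varphi$ fixes $\gamma_1\cup\dots\cup\gamma_{j-1}$ pointwise; in particular $p_{j-1}$ (or $\gamma'(0)\in\partial\Sigma$ when $j=1$) becomes a fixed point, and by the same reasoning as in the discussion following Definition~\ref{ShortenedArc} the arc segment $\gamma_j\cup\dots\cup\gamma_n$ is left-veering with starting point $p_{j-1}$.

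The next step is to pin down \emph{why} $\gamma_j$ fails to be fixable. If no representative of $\varphi$ fixes $p_{j-1}$ together with a point of $\alpha:=\alpha^{(j)}$ near $p_j$, then after an innermost-disc isotopy $\alpha$ is disjoint from $\varphi(\alpha)$, hence strictly right- or strictly left-veering; in the left-veering case $\alpha$ is itself a left-veering arc meeting $\mathcal B$ in no interior points, contradicting minimality (recall $\gamma'$ meets $\mathcal B$ since $n\ge 2$), and in the strictly right-veering case one argues that $\gamma'$ can be isotoped to meet $\alpha$ in fewer points, again contradicting minimality. Hence $p_j$ can be arranged to be a fixed point, and, $\gamma_j$ being non-fixable, it is then strictly right-veering or left-veering relative to $\{p_{j-1},p_j\}$. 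In the strictly right-veering case $\gamma_j$ and $\varphi(\gamma_j)$ share both endpoints without being isotopic rel endpoints, and combining this with the left-veeringness of $\gamma_j\cup\dots\cup\gamma_n$ at $p_{j-1}$ produces, after an isotopy, a left-veering arc meeting $\mathcal B$ in fewer points -- a contradiction. So the only remaining possibility is that $\gamma_j$ is \emph{left-veering} relative to its (now fixed) endpoints.

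With this established I would perform surgery on $\gamma'$: discard the tail $\gamma_{j+1}\cup\dots\cup\gamma_n$ and replace it by a sub-arc $s$ of $\alpha$ running from $p_j$ to an endpoint of $\alpha$ on $\partial\Sigma$, chosen on the side of $\alpha$ from which $\gamma_j$ approaches $p_j$ so the corner can be smoothed without a transverse intersection, and then push $s$ slightly off $\alpha$. Let $\gamma^{\ast}=\gamma_1\cup\dots\cup\gamma_j\cup s$. Since $s$ lies inside an arc of the pairwise disjoint collection $\mathcal B$ it contributes no new intersection points with $\mathcal B$, so $\gamma^{\ast}$ meets $\mathcal B$ in at most $j-1\le n-2$ interior points -- strictly fewer than $\gamma'$ (if a crossing of $\gamma'$ with $\alpha$ forces $s$ through a point of $\gamma'$, pass to the isotopy class of $\gamma^\ast$, which has no more intersections). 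Moreover $\gamma^{\ast}$ is by construction a shortened arc: its first $j-1$ segments $\gamma_1,\dots,\gamma_{j-1}$ are fixable and its final segment is $\gamma_j\cup s$. Hence, again by the observation after Definition~\ref{ShortenedArc}, $\gamma^{\ast}$ is left-veering exactly when $\gamma_j\cup s$ is left-veering at $p_{j-1}$, and this follows from the left-veeringness of $\gamma_j$ relative to $\{p_{j-1},p_j\}$: attaching the embedded arc $s$ at the fixed endpoint $p_j$ creates no half-bigon at $p_{j-1}$ and so cannot reverse the veering there. Thus $\gamma^{\ast}$ is left-veering and meets $\mathcal B$ in fewer points than $\gamma'$, a contradiction; therefore $\gamma'$ was shortened to begin with.

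I expect the main obstacle to be the case analysis of the middle paragraph -- in particular ruling out the ``endpoint of $\gamma_j$ not fixable with $\alpha$ strictly right-veering'' subcase and the ``$\gamma_j$ strictly right-veering rel its endpoints'' subcase -- together with the claim of the last paragraph that the surgery preserves left-veeringness. Both ultimately reduce to a careful analysis of how the minimal position of an arc and its image behaves near the base point $p_{j-1}$ under the relevant modifications, and to showing that the only genuinely obstructing behaviour is a single left-veering segment; everything else should follow from routine innermost-disc and bigon arguments, which I would isolate as short sublemmas.
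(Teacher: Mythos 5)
Your overall route is genuinely different from the paper's: the paper does not take a minimal arc and do surgery, it takes the given left-veering arc $\gamma$, locates the \emph{first} intersection point $x$ with $\mathcal{B}$ after which $\gamma$ and $\varphi(\gamma)$ leave the disc cut out by $\mathcal{B}$ through \emph{different} basis arcs, truncates $\gamma$ there (running out to the boundary with no further intersections), and proves that the truncated arc $\gamma'$ is left-veering by one disjointness argument: if $\gamma'$ were right-veering, then $\varphi(\gamma')$, $\varphi(\gamma)$ and $\partial\Sigma$ would bound a disc, which would have to be the image of a disc bounded by $\gamma'$, $\gamma$ and $\partial\Sigma$, impossible since $\gamma$ and $\gamma'$ are disjoint up to that point. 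Your extremal-plus-surgery scheme is a reasonable alternative in outline, but as written it has genuine gaps at exactly the two places you flag, and they are the mathematical content of the lemma rather than routine bookkeeping.

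Concretely: (i) the case analysis in your second paragraph is not a proof. The dichotomy ``either a point of $\alpha$ near $p_j$ can be made a fixed point or $\alpha$ is disjoint from $\varphi(\alpha)$'' is unjustified (non-fixability of $\gamma_j$ in the sense of Definition \ref{Fixable} is not equivalent to $\alpha\cap\varphi(\alpha)=\emptyset$), and in the strictly right-veering subcases you assert, with no construction, that one can isotope $\gamma'$ to meet $\mathcal{B}$ in fewer points --- but $\gamma'$ and $\alpha$ are already in minimal position, so essential intersections cannot be removed, and it is never explained how a strictly right-veering (rel endpoints) $\gamma_j$ yields a left-veering arc of smaller intersection number. (ii) The pivotal claim that $\gamma^{\ast}=\gamma_1\cup\dots\cup\gamma_j\cup s$ is still left-veering is only asserted. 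After the surgery, $p_j$ is no longer a constrained endpoint (it becomes an interior point, and $s$ has a free endpoint on $\partial\Sigma$), so $\varphi(\gamma^{\ast})$ admits isotopies that were not available to $\varphi(\gamma_j)$ rel $\{p_{j-1},p_j\}$; ``attaching $s$ creates no half-bigon at $p_{j-1}$'' is precisely what must be proved, and proving it amounts to an argument of the same type the paper actually gives (a putative half-bigon at $p_{j-1}$ would be the image of a disc that cannot exist for disjointness/embeddedness reasons). Since both (i) and (ii) are deferred to ``short sublemmas'' that are not supplied, the proposal does not yet constitute a proof of Lemma \ref{ShortenedLV}.
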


\begin{proof}

We may assume $\varphi(\gamma)$ is bigon free with respect to $\mathcal{B} \cup {\gamma}$. We define the arc $\gamma'$ as follows. Let $x \in \gamma \cap \mathcal{B}$ be the first intersection point with the basis such that, after $x$, $\gamma $ and $\varphi(\gamma)$ exit the disc cut out by the basis by intersecting different arcs $\alpha_1$ and $\alpha_2$ respectively, and let the intersection of $\gamma$ with $\alpha_1$ be $y$. Then take the arc $\gamma '$ that is the same as $\gamma$ up to $x$ and ends in the starting point of $\alpha_2$ without having any more intersections with $\mathcal{B}$ or the arc segment of $\gamma$ up to $y$. Clearly $\gamma'$ is shortened with respect to $\mathcal{B}$. To show that it is left-veering, take the arc $\gamma'$ and isotope it slightly so that it lies to the left of $\gamma$. Then its image is fixable and to the left of the image of $\gamma$ up to $x$. Suppose for a contradiction that $\gamma'$ is right-veering. Then $\varphi(\gamma')$ must intersect $\varphi(\gamma)$ so that $\varphi(\gamma)$, $\varphi(\gamma')$, and $\partial \Sigma$ bound a disc. Moreover, this would have to be the image of a disc bounded by $\gamma$, $\gamma'$, and $\partial \Sigma$. However, this gives a contradiction because $\gamma$ and $\gamma' $ are (by construction) disjoint before $\gamma$ intersects $\alpha_1$ so they cannot bound a disc, as we can see in Figure \ref{fig:MinimalArc}.

 \begin{figure} [htp]
    \centering
    \includegraphics[height=5cm]{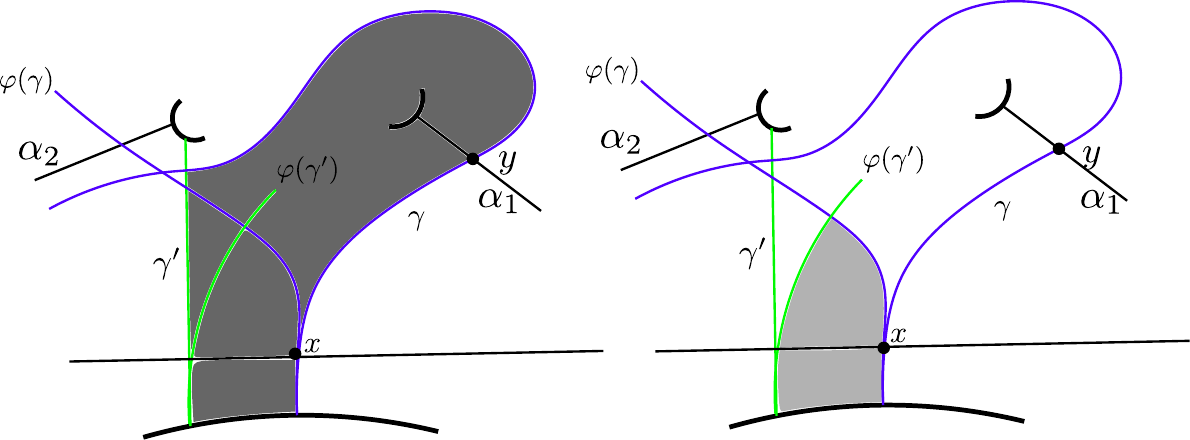}
    \caption[Shortened left-veering arc.]{If $\gamma'$ were right-veering, the image of the darkly shaded subsurface would have to be the lightly shaded one, a contradiction. Note that in this case $\gamma $ is an arc and not an arc image even though it is not represented with a straight line.}
    \label{fig:MinimalArc}
\end{figure}

\end{proof}

\begin{definition} \label{Minimal}
Define a length of an arc $\gamma$ with respect to a basis $\mathcal{B}$ by the (unsigned) number of intersections of $\gamma$ with all the arcs of $\mathcal{B}$. Out of all shortened left-veering arcs, we call one which minimises this length a \emph{minimal left-veering arc} with respect to $\mathcal{B}$.
\end{definition}

Note that a minimal left-veering arc minimises this distance for all left-veering arcs and not just shortened ones.

We will be interested primarily in minimal left-veering arcs. Moreover, when we have a $2n$-gon $P$ cut out by $\{\alpha_i\}_{i=0}^n$ with $\alpha_0$ left-veering, we can assume there are no left-veering arcs contained in $P$, because otherwise we can find a subset of the arcs cutting out a $2m$-gon (for $m<n$) with a left-veering arc, and we can work with this subset instead. In particular, we can assume that the image of $\alpha_0$ leaves the $2n$-gon by intersecting $\alpha_1$. This is because if it leaves $P$ by intersecting some other arc, say $\alpha_k$, then the arc $\beta_0$ that is obtained by consecutive arcslides of $\alpha_0$ over $\alpha_1, \dots, \alpha_{k-1}$ must be left-veering, by the same argument we made in Lemma \ref{ShortenedLV}, and then we can focus on the $2(n-k)$-gon cut out by $\alpha_{k}, \alpha_{k+1}, \dots, \alpha_n$, and $\beta_0$, which will have the property we want.

\subsection{Base Case} \label{Base Case}

In this subsection we show using methods from Section \ref{Preliminaries} that extended towers detect left-veering arcs and fixable arc segments in $6$-gons. This will be the base case of our induction.

\begin{proposition} \label{TriangleLV}
Let $(\Sigma, \varphi)$ be an open book. Let $\alpha_0, \alpha_1, \alpha_2$ be properly embedded arcs cutting out a $6$-gon $P$, oriented counterclockwise, and assume $\alpha_1$ and $\alpha_2$ are right-veering. Then $\alpha_0$ is left-veering if and only if $\{\alpha_1, \alpha_2 \}$ support an incomplete extended tower that is nested, nice, and replete.

\end{proposition}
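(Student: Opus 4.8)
The plan is to leverage Proposition~\ref{InitialLV} and Proposition~\ref{RV}, which already analyze exactly this $6$-gon situation, and then check that the regions they produce assemble into an extended tower with the claimed properties (nested, nice, replete), and conversely that any such extended tower forces the incomplete region of Proposition~\ref{InitialLV} to exist. So the statement is really a translation between the "region" language of Section~\ref{Preliminaries} and the "extended tower" language of Section~\ref{Towers}, specialized to arc collections $\Gamma = \{\alpha_1, \alpha_2\}$.

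First I would prove the forward direction. Assume $\alpha_0$ is left-veering. By Proposition~\ref{InitialLV} there is a positive region $R$ in $\{\alpha_1,\alpha_2\}$ contained in $P$, with $\bullet$-points on $\partial\Sigma$ and one edge on a basepoint triangle, with no completion. I claim $\mathcal{T} := \{R\}$ (or, if repleteness forces more negative regions to be adjoined, the replete closure of $\{R\}$) is the desired extended tower. Checking it is an extended tower: $\mathrm{Dot}(R) \subset \partial\Sigma$ so the $\bullet$-condition holds trivially; the $\circ$-points of $R$ are the intersection points where the arc images leave $P$, and the one on the basepoint triangle lies in $\mathrm{Circ}_\partial(\Gamma)$ — here I need to argue the other $\circ$-point is also of the allowed type, which follows from tracing through the proof of Proposition~\ref{InitialLV} (the $\circ$-point $z = \alpha_0 \cap \alpha_1$ is where $\alpha_0$ leaves, and $y = \alpha_2 \cap \varphi(\alpha_2)$... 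I would carefully identify which $\circ$-points $R$ actually has and confirm each is in $\mathrm{Circ}(\mathcal{T}^+) \cup \mathrm{Circ}_\partial(\Gamma)$, possibly after adding a forced region). Niceness: $R \subset P$ by construction, and there are no negative regions to worry about (or, if repleteness adds one, it comes from $\varphi(P)$ which is a disc, so its interior misses $\varphi(\Gamma)$ by the bigon-free hypothesis). Nested: $R$ has all $\bullet$-points on $\partial\Sigma$ so $R \in \mathcal{T}^+_0$, giving the required level-$0$ positive region. Incomplete: the only negative regions in $\mathcal{T}$ (if any, forced by repleteness) must, by the structure of $\varphi(P)$ and the argument in Proposition~\ref{RV}, share a $\bullet$-point with $R$ — and crucially, the fact that $R$ has \emph{no completion} is exactly what prevents the "missing" negative region (the completion) from being addable, which is what makes $\mathcal{T}$ incomplete rather than completed. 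This last point is where I'd be most careful: I must show that incompleteness of the extended tower is equivalent to non-existence of the completion of $R$, not merely implied by it.

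For the converse, assume $\{\alpha_1,\alpha_2\}$ support an incomplete, nested, nice, replete extended tower $\mathcal{T}$. By nestedness there is a level-$0$ positive region $R_0 \in \mathcal{T}^+_0$, i.e. a positive region in $\{\alpha_1,\alpha_2\}$ with all $\bullet$-points on $\partial\Sigma$; by niceness $R_0 \subset P$. I would argue that $R_0$ must have an edge on a basepoint triangle (its $\circ$-points are in $\mathrm{Circ}(\mathcal{T}^+)\cup\mathrm{Circ}_\partial(\Gamma)$, and with only two arcs and the disc constraint on $\Gamma\cup\{\alpha_0\}$, at least one must be a basepoint-triangle vertex — otherwise the two arcs would cut out a disc, contradicting the standing hypothesis on $\Gamma$, cf. the remark after Definition~\ref{Nested}). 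Then, supposing for contradiction that $\alpha_0$ is right-veering (either isotopic to its image or strictly right-veering): in that case Proposition~\ref{RV} produces a collection of regions where every positive region is completed, in particular $R_0$ has a completion $R_0'$ supported in $\{\alpha_1,\alpha_2\}$; I check $\mathcal{T}\cup\{R_0'\}$ is still a nice extended tower (using that $\alpha_0$ right-veering forces the relevant edge on $\alpha_1$ to be restricted, as in the proof of Proposition~\ref{InitialLV}), so by repleteness $R_0' \in \mathcal{T}$ — but then $R_0'$ is a negative region all of whose $\bullet$-points lie on $\partial\Sigma$ (being the "other" endpoints of the arcs), hence $R_0'$ has no two-sided $\bullet$-point, contradicting incompleteness. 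Therefore $\alpha_0$ is left-veering.

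The main obstacle is bookkeeping in the case analysis for which $\circ$-points and $\bullet$-points each region actually has, and in particular verifying that repleteness does not secretly force in extra regions that would spoil incompleteness — i.e. pinning down the precise equivalence "$R$ (from Prop.~\ref{InitialLV}) has no completion" $\iff$ "$\mathcal{T}$ is incomplete." This requires ruling out, via the bigon-free and restricted-edge arguments already used in Propositions~\ref{InitialLV} and~\ref{RV}, any negative region in $\{\alpha_1,\alpha_2\}$ other than the ones those propositions list; once that finiteness/classification of regions in a $6$-gon is nailed down, the rest is routine matching of definitions.
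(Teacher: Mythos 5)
Your overall strategy is the same as the paper's: the forward direction takes $\mathcal{T}=\{R\}$ with $R$ the region from Proposition~\ref{InitialLV} and checks the properties (this matches the paper, which notes that since $R$ has no completion there are no negative regions to add, so $\{R\}$ is vacuously incomplete, replete, nice, and nested at level $0$), and the converse pins down the level-$0$ positive region via nestedness and niceness and then plays repleteness against incompleteness. However, your converse contains a genuine gap at the decisive step. You claim that the completion $R_0'$, once forced into $\mathcal{T}$ by repleteness, has all of its $\bullet$-points on $\partial\Sigma$ ``being the other endpoints of the arcs,'' and you derive the contradiction with incompleteness from that. This description comes from the first case of Proposition~\ref{RV}, but that case does not occur here: there the positive region is a $6$-gon with an edge on $\alpha_0$, so it is not supported in $\{\alpha_1,\alpha_2\}$ at all. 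In the relevant configuration (the one of Proposition~\ref{InitialFixed}, where $\alpha_2$ is $\varphi$-contained in $\alpha_1$, i.e.\ $\alpha_0$ strictly right-veering), the completion is the negative region of the splitting pair, and it has a $\bullet$-point in the \emph{interior} of $\alpha_1$, at a point of $\alpha_1\cap\varphi(\alpha_1)$. So adding $R_0'$ does not immediately violate incompleteness: you must additionally show that this interior $\bullet$-point is not a vertex of any positive region of $\mathcal{T}$ supported in $\{\alpha_1,\alpha_2\}$.

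This is exactly the extra step the paper supplies: once $R$ and its completion are present, every point of $\alpha_2$ already belongs to a region, so no further positive region can be added to the extended tower (any such region would put a corner in the interior of an existing region), and hence the negative region shares no $\bullet$-point with any positive region of $\mathcal{T}$, contradicting incompleteness. You do flag the ``bookkeeping'' of classifying regions in the $6$-gon as the main obstacle, but the specific assertion you rely on is false in the main case, so the contradiction as written does not go through; closing the gap requires the argument above (or equivalently, the paper's route: show $\mathcal{T}=\{R\}$, deduce from repleteness that $R$ has no completion, and then apply the converse of Proposition~\ref{InitialLV} directly).
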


\begin{proof}
    First suppose that $\alpha_0$ is left-veering. Then Proposition \ref{InitialLV} gives a (positive) region $R$ whose interior is disjoint from $\{ \alpha_1, \alpha_2 \}$. Since again by Proposition \ref{InitialLV} there are no negative regions, $\mathcal{T} = \{ R \}$ forms an incomplete extended tower that moreover is replete and nice, and $R$ is on level zero since the $\bullet$-points of $R$ are on the boundary, so $\mathcal{T}$ is nested. \medskip

    Now suppose that there exists an incomplete extended tower $\mathcal{T}$ supported in $\{\alpha_1, \alpha_2\}$. We want to show that $\mathcal{T} = \{ R \}$, with $R$ the region from Proposition \ref{InitialLV}, which shows that $\alpha_0$ is left-veering. There must be a region in $\mathcal{T}^+_0$, and for $\mathcal{T}$ to be nice it must be the region $R$ from Proposition \ref{InitialLV}. Now, if there exists a negative region $R'$ in $\mathcal{T}^-_0$, then again by Proposition \ref{InitialLV} every point on $\alpha_2$ belongs to a region, so there can be no more positive regions. But now $R'$ does not have any $\bullet$-points in common with a positive region, so $\mathcal{T}$ is not incomplete --a contradiction. So there does not exist such a negative region, and thus $\mathcal{T} = \{ R \}$, and then by Proposition \ref{InitialLV} $\alpha_0 $ is left-veering.

 \end{proof}

\begin{proposition} \label{TriangleFixed}
Let $(\Sigma, \varphi)$ be an open book. Let $\alpha_1, \alpha_2, \alpha_0$ be properly embedded strictly right-veering arcs cutting out a $6$-gon $P$, oriented counterclockwise. Let $\gamma$ be an arc segment contained in $P$ starting between $\alpha_2$ and $\alpha_0$ and ending in the interior of $\alpha_1$. Then $\gamma$ is fixable by $\varphi$ if and only if $\{\alpha_1, \alpha_2\}$ support a completed extended tower that is nested, nice and replete, and whose connecting vertex coincides with $\gamma \cap \alpha_1$.
\end{proposition}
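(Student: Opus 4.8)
The plan is to run the same dichotomy as in Proposition~\ref{TriangleLV}, using the $6$-gon results from Section~\ref{Preliminaries} as the concrete input and then checking that the collection of regions produced is exactly a completed extended tower (nested, nice, replete) with the prescribed connecting vertex. Concretely, one direction is essentially a repackaging of Proposition~\ref{InitialFixed}, and the other is an extraction argument analogous to the second half of Proposition~\ref{TriangleLV}.

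For the forward direction, assume $\gamma$ is fixable by $\varphi$ with $x = \gamma \cap \alpha_1 \in \alpha_1 \cap \varphi(\alpha_1)$ its terminal endpoint. By Proposition~\ref{InitialFixed}, $\{\alpha_1, \alpha_2\}$ supports a positive splitting pair $\{R, R'\}$ with the interior $\bullet$-point of $R'$ equal to $x$. I would then verify that $\mathcal{T} = \{R, R'\}$ is an extended tower: $R$ is a $6$-gon whose $\bullet$-points are all on $\partial\Sigma$ (so it is in level $0$ and witnesses nestedness), one of its edges is the edge of the basepoint triangle formed by $\varphi(\alpha_1)$ and $\alpha_2$, so $\mathrm{Circ}(R)$ meets $\mathrm{Circ}_\partial(\Gamma)$; and $R'$ is the negative rectangle completing $R$, so $\mathrm{Circ}(R') \subset \mathrm{Circ}(R^+) \cup \mathrm{Circ}_\partial(\Gamma)$ and $\mathrm{Dot}(R') = \{x, \text{endpoint of }\alpha_1\}$ with $x$ shared with nothing else. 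Then $\mathcal{T}$ is nested (one level-$0$ positive region $R$, one level-$0$ negative region $R'$), nice (since $R \subset P$ and $\mathrm{int}(R')$ is in $\varphi(P)$ away from the triangle, disjoint from $\varphi(\Gamma)$ because our collections are bigon-free), and the only interior vertex failing two-sidedness is exactly the $\bullet$-point $x \in \alpha_1 \cap \varphi(\alpha_1)$, which is the connecting vertex. Repleteness holds because by Proposition~\ref{RV} (second case) every $\circ$-point available to a further negative region is already a vertex of $R$ or $R'$, and adding anything else would violate niceness or nestedness, or produce a non-acute corner.

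For the converse, suppose $\{\alpha_1, \alpha_2\}$ supports a completed extended tower $\mathcal{T}$, nested, nice, replete, with connecting vertex $y_0 = \gamma \cap \alpha_1$. As in Proposition~\ref{TriangleLV}, nestedness forces a level-$0$ positive region, which by niceness (contained in $P$, with all $\bullet$-points on $\partial\Sigma$) must be the region $R$ of Proposition~\ref{InitialLV}/\ref{InitialFixed}; in particular its edge on $\alpha_2$ is the edge of a basepoint triangle. Completedness says every interior vertex of $\mathcal{T}$ except one $\bullet$-point of $\alpha_1 \cap \varphi(\alpha_1)$ is two-sided, so $R$ must have a completion $R' \in \mathcal{T}^-$ (its interior $\circ$-points cannot be left dangling), and the sole non-two-sided interior vertex, which is the $\bullet$-point of $R'$ in the interior of $\alpha_1$, is forced to equal $y_0$. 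Thus $\{R, R'\}$ is a positive splitting pair with interior $\bullet$-point $y_0$, so Proposition~\ref{InitialFixed} gives that $\gamma$ is fixable by $\varphi$. One still has to argue that $\mathcal{T}$ contains no further regions, which again follows as in Proposition~\ref{TriangleLV}: any additional positive region would have to use a point of $\alpha_2$ already covered, and any additional negative region would either violate completedness (introducing a non-two-sided $\circ$-point) or niceness.

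The main obstacle I expect is the bookkeeping in the converse showing that the connecting vertex is forced to be precisely $\gamma \cap \alpha_1$ rather than some other intersection point of $\alpha_1 \cap \varphi(\alpha_1)$, and correspondingly that there is a genuine bijection between completed extended towers here and fixable arc segments $\gamma$ through the given region of $P$. This amounts to checking that, in a $6$-gon, the splitting pair of Proposition~\ref{RV} is uniquely determined by the choice of interior $\bullet$-point on $\alpha_1$, and that $\gamma$ (which starts between $\alpha_2$ and $\alpha_0$ on $\partial\Sigma$) is recovered up to isotopy from the rectangle $R'$ as the arc running parallel to the restricted edges; the isotopy of $\varphi(x)$ along $\varphi(\alpha_1)$ to make $\varphi(x) = x$ (already handled in Proposition~\ref{InitialFixed}) needs to be carried along so that the statement "fixable" is literally satisfied. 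The niceness and repleteness verifications are routine once the region $R$ is pinned down, so the real content is this uniqueness/identification step.
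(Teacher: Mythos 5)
Your proposal is correct and follows essentially the same route as the paper: the forward direction packages the splitting pair from Proposition \ref{InitialFixed} as the completed extended tower, and the converse uses nestedness and niceness to pin down the level-$0$ positive region as in Proposition \ref{TriangleLV}, then completedness to force its completion $R'$ whose interior $\bullet$-point is the connecting vertex, and finally Proposition \ref{InitialFixed} again. The only quibbles are cosmetic: the positive region of the splitting pair is a rectangle rather than a $6$-gon, and the ``identification'' issue you flag at the end is already settled because the coincidence of the connecting vertex with $\gamma\cap\alpha_1$ is part of the hypothesis, exactly as the paper uses it.
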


\begin{proof}

First suppose that $\gamma$ is fixable. Then we have the regions $R$ and $R'$ from Proposition \ref{InitialFixed} forming the splitting pair, and we can see that they form a completed extended tower which is nested, replete and nice, and the unique connecting vertex is $\gamma \cap \alpha_1$.

Conversely, suppose that there exists a completed extended tower $\mathcal{T}$ which is nice and replete. By the same reasoning as in Proposition \ref{TriangleLV}, the positive region $R$ from Proposition \ref{InitialFixed} must be in $\mathcal{T}$. Since $\mathcal{T}$ is completed, there must be a negative region $R'$ with the same $\circ$-points as $R$. As one of the $\circ$-points in this negative region is on the basepoint triangle on $\alpha_2$, one of the $\bullet$-points of $R'$ is on the boundary (because there can be no other intersection points between the $\circ$-point on the basepoint triangle and the boundary as our arc collections are bigon free). Now every point of $\alpha_2$ belongs to a region, so there can be no more regions in $\mathcal{T}$. This means that for $\mathcal{T}$ to be completed the other $\bullet$-point of $R'$ must be an interior point $\alpha_1 \cap \varphi(\alpha_1)$, which means that the regions in $\mathcal{T}$ are the regions from Proposition \ref{InitialFixed} (i.e the splitting pair), so $\gamma$ is fixable.

\end{proof}

\subsection{Inductive Step} \label{Inductive Step}

We now have that a left-veering arc is detected by an incomplete extended tower if it cuts out a $6$-gon (with the correct orientation) with two arcs from the basis. We want to extend this by induction to the case where the left-veering arc cuts out an $n$-gon with arcs from the basis. Similarly, we have that completed towers detect fixable arc segments when the arc segment is contained in a $6$-gon cut out by three arcs, two of which are from our basis, and we want to extend to the case where the arc segment is contained in a $n$-gon, with $n-1$ arcs in our basis. To show this, let us first introduce some notation to be used throughout this subsection. \medskip

Let $\alpha_0$, $\alpha_1$, and $\alpha_2$ be properly embedded arcs that cut out a $6$-gon $P$, where the arcs are labelled counterclockwise. Also let $\Gamma$ be an arc collection such that there exists an arc $\beta$ disjoint from $\Gamma$ with $ \mathcal{C} = \Gamma \cup \{ \beta \}$ cutting out a disc $P'$ with disjoint interior with $P$, and $\alpha_0 \in \Gamma$. Again, we will orient this arc collection with the counterclockwise orientation. Moreover, assume that $\alpha_0$ is not the first arc in $\Gamma$ (i.e the next one to $\beta$ as we go counterclockwise through the boundary of the disc cut out by $\mathcal{C}$). This is because we want to detect fixable arcs with an endpoint on the first arc of the collection $\Gamma$, so we do not want this point to change. Let $\Gamma' = (\Gamma \setminus \{ \alpha_0 \} ) \cup \{ \alpha_1, \alpha_2 \}$. Then $\mathcal{C}' = \Gamma' \cup \{ \beta \}$ cuts out a disc $ P \cup P'$. Orient the arcs in this collection again counterclockwise (this agrees with the previous orientation). \medskip

The idea is that, given an extended tower $\mathcal{T}$ in $\Gamma$, we can slide its regions over $\alpha_0$ to $\alpha_1$ and $\alpha_2$ to obtain an extended tower $\mathcal{T}'$ supported in $\Gamma'$, that will have the same properties as $\mathcal{T}$. We can see a simple example with completed extended towers in Figure \ref{fig:SlideExample}.

 \begin{figure}[htp]
    \centering
    \includegraphics[height = 4cm]{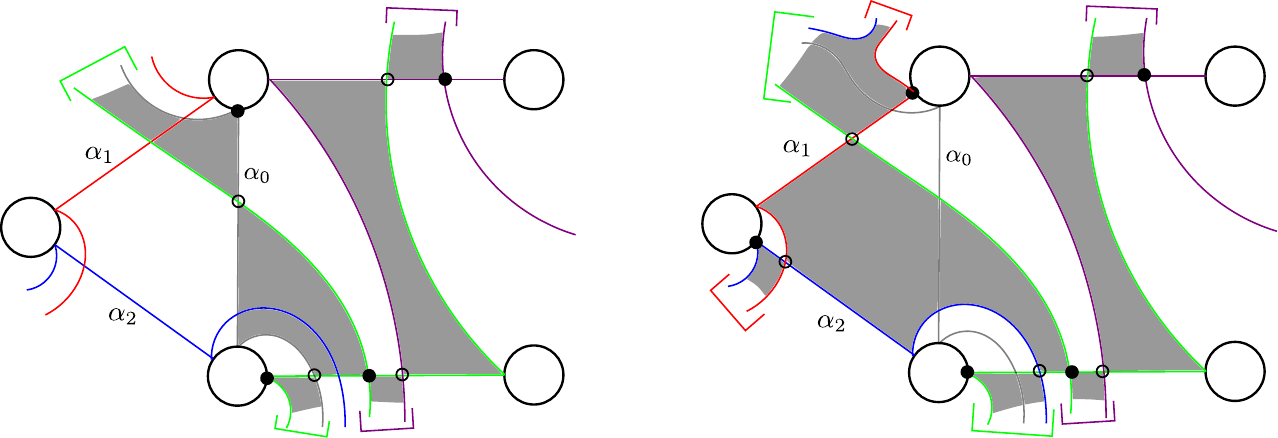}
    \caption[Sliding regions.]{The regions for the new extended tower are obtained by sliding the regions from the old extended tower}
    \label{fig:SlideExample}
\end{figure}

To make this operation more precise we now define two maps, which we will call \emph{slide maps} and denote by $s^{\pm}$. Given an extended tower $\mathcal{T}$ supported in $\Gamma$, where $\Gamma$ is an arc collection as above, these maps will send the set of vertices of the positive (respectively negative) regions of $\mathcal{T}$ (denoted by $V(\mathcal{T}^{\pm})$) to intersection points $ \Gamma' \cap \varphi(\Gamma')$. These points will determine regions that form the extended tower $\mathcal{T}'$ supported in $\Gamma'$ that will have the same properties as $\mathcal{T}$. Note that most points will be two-sided and thus will be both in $\mathcal{T}^+$ and $\mathcal{T}^-$, which means that they will have an image under $s^+$ and an image under $s^-$. Most of the time these will agree. In fact, the only time they will not agree will be when a component of $P \cap \varphi(P)$ is a $6$-gon.

\begin{definition} \label{SlidePositive}
    Let $x \in \textrm{V}(\mathcal{T}^+)$. Then $s^+(x)$ is defined as follows.
    \begin{enumerate}
    
        \item If $x $ does not lie on $\alpha_0$ or $\varphi(\alpha_0)$, $s^+(x) = x$.
        
        \item If $x$ lies on the intersection of $\varphi(\alpha_0)$ with some other arc $\beta$ from $\Gamma$, then $s^+(x)$ is the intersection point $y$ of $\beta$ with $\varphi(\alpha_1)$ or $\varphi(\alpha_2)$ such that the segment between $x$ and $y$ is contained in $\beta \cap \varphi(P)$.
        
        \item If $x$ lies on the intersection of $\alpha_0$ with the image of some other arc $\beta$ from $\Gamma$, then $s^+(x)$ is the intersection point $y$ of $\varphi(\beta)$ with $\alpha_1$ or $\alpha_2$ such that the segment between $x$ and $y$ is contained in $\varphi(\beta) \cap P$.
        
        \item If $x$ lies on the intersection of $\alpha_0$ with its image, then $s^+(x)$ is the intersection point $y$ of $\alpha_m$ with $\varphi(\alpha_l)$ (where $m$ and $l$ can be $1$ or $2$ and not necessarily equal), obtained by first going along $\alpha_0$ to $\varphi(\alpha_l)$, and then along $\varphi(\alpha_l)$ to $y$, such that this path is contained in $P \cap \varphi(P)$.
        
    \end{enumerate}
    We illustrate the different cases in Figure \ref{fig:SlideCases}, where we can see that, while the definition may seem arbitrary, for an intersection point $x \in \Gamma \cap \varphi(\Gamma)$ we are essentially choosing ``the closest point'' to $x$ that belongs to $\Gamma' \cap \varphi(\Gamma')$.
\end{definition}

\begin{proposition} \label{WellDefined}
    The map $s^+$ is well defined.
\end{proposition}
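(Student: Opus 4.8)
The plan is to verify the three things a well\hyp{}definedness claim needs: that the four cases of Definition~\ref{SlidePositive} cover all of $\mathrm{V}(\mathcal{T}^+)$, that they are mutually exclusive, and that in each case the prescription yields exactly one point of $\Gamma'\cap\varphi(\Gamma')$. The bookkeeping is quick: a vertex of a region supported in $\Gamma$ is a point $\alpha_i\cap\varphi(\alpha_j)$ (a vertex on $\partial\Sigma$ is not moved and falls under case~(1), since $\varphi$ fixes $\partial\Sigma$ pointwise), and the four cases are precisely the possibilities $i,j\ne 0$; $j=0\ne i$; $i=0\ne j$; and $i=j=0$. So the cases partition the domain and case~(1) is immediate.

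For cases~(2) and~(3), observe that these are interchanged by applying the homeomorphism $\varphi$, i.e.\ by swapping the roles of $\Gamma$ and $\varphi(\Gamma)$, so it suffices to treat~(2). Here $x\in\varphi(\alpha_0)\cap\beta$ for some $\beta\in\Gamma\setminus\{\alpha_0\}$ (which is also an arc of $\Gamma'$). Using that $\mathcal{T}$ is nice --- so the positive region with a corner at $x$ lies in $P'$ --- one first checks that $x$ lies on the edge of the image hexagon $\varphi(P)$ that is contained in $\varphi(\alpha_0)$, and that exactly one of the two sub\hyp{}arcs of $\beta$ emanating from $x$ enters $\varphi(P)$. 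Following that sub\hyp{}arc inside the disc $\varphi(P)$, it must eventually leave: it cannot exit through the $\varphi(\alpha_0)$\hyp{}edge, since that would produce an embedded bigon between $\beta$ and $\varphi(\alpha_0)$, excluded by our standing minimal\hyp{}position assumption; and it cannot exit through a $\partial\Sigma$\hyp{}edge of $\varphi(P)$, since an interior point of the properly embedded arc $\beta$ never meets $\partial\Sigma$ and the location of $x$ precludes $\beta$ from ending inside $\varphi(P)$. Hence it exits through the edge on $\varphi(\alpha_1)$ or on $\varphi(\alpha_2)$ at a first point $y$, which is the point of the definition; uniqueness is then automatic, because the sub\hyp{}arc is embedded and transverse to those edges, so once it reaches $\varphi(\alpha_1)\cup\varphi(\alpha_2)$ it genuinely leaves $\varphi(P)$ and no later intersection point can be joined to $x$ within $\varphi(P)$.

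Case~(4) is where I expect the real work. Now $x\in\alpha_0\cap\varphi(\alpha_0)$ and $s^+(x)$ is produced by a two\hyp{}leg path: along $\alpha_0$, staying in $\varphi(P)$, to a first point $w\in\varphi(\alpha_l)$, and then along $\varphi(\alpha_l)$, staying in $P$, to a first point $y\in\alpha_m$. The first leg is well\hyp{}defined by the argument of case~(2) applied with $\alpha_0$ in place of $\beta$: $\alpha_0$ is an edge of the hexagon $P$ and is in minimal position with $\varphi(\alpha_0)$, so the sub\hyp{}arc of $\alpha_0$ leaving $x$ into $\varphi(P)$ cannot escape through the $\varphi(\alpha_0)$\hyp{}edge or a $\partial\Sigma$\hyp{}edge of $\varphi(P)$, and so reaches $\varphi(\alpha_1)\cup\varphi(\alpha_2)$ at a unique first point $w$. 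The second leg is handled symmetrically inside $P$: $\varphi(\alpha_l)$ is in minimal position with $\alpha_1,\alpha_2$ and cannot leave $P$ through its own edge or a $\partial\Sigma$\hyp{}edge, so it reaches $\alpha_1\cup\alpha_2$ at a unique first point $y$. Since both legs are forced, $y$ depends only on $x$, and the path lies in $\overline{P\cap\varphi(P)}$ as required.

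The subtle point in all of this --- and the reason the hypothesis that $x$ is a corner of a positive region of the nice extended tower $\mathcal{T}$ genuinely gets used --- is every one of the ``non-escape'' claims: ruling out that a sliding arc terminates at a $\partial\Sigma$\hyp{}edge, or at an endpoint of an arc, before it meets the relevant image of a new basis arc. I would prove these by combining properness of the arcs in $\Gamma'$, minimal position (bigon\hyp{}freeness) of $\Gamma'$ with $\varphi(\Gamma')$, and the fact that $x$ sits on the boundary of a region contained in the disc $P'$, which constrains the local picture of $P$, $\varphi(P)$ near $x$.
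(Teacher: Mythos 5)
Your proposal is correct and follows essentially the same route as the paper's proof: a case-by-case check in which the disc structure of $P$ and $\varphi(P)$, together with bigon-freeness (minimal position), forces the sliding arc to exit through $\varphi(\alpha_1)\cup\varphi(\alpha_2)$ (respectively $\alpha_1\cup\alpha_2$) at a unique first point, with Case 4 handled as two consecutive legs of the Case 2 argument; you simply spell out details the paper compresses into ``because $\varphi(P)$ is a disc.'' One small remark: the niceness of $\mathcal{T}$ is not actually what rules out the escape scenarios --- the paper's argument (and yours, in substance) needs only that the arcs of $\Gamma'$ are pairwise disjoint and properly embedded, that $\varphi$ fixes $\partial\Sigma$, and the standing bigon-free assumption, so that appeal is superfluous rather than load-bearing.
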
  

\begin{proof}
   Case 1 is immediate. Case 2 is well defined because if $\varphi(\alpha_0)$ intersects an arc then either $\varphi(\alpha_1)$ or $\varphi(\alpha_2)$ must also intersect that arc because $\varphi(P)$ is a disc. Moreover, there is a unique segment from $x$ to $y$ contained in $\beta \cap \varphi(P)$. Case 3 is the same as Case 2 but with the roles of the arcs and arc images reversed. Finally, in Case 4, the same argument as for Case 2 shows that either $\varphi(\alpha_1)$ or $\alpha_2$ intersect $\alpha_0$, and there is a unique segment between $x$ and $\varphi(\alpha_m)$ contained in $P \cap \varphi(P)$. Then, $\varphi(\alpha_m)$ must exit $P$ by intersecting either $\alpha_1$ or $\alpha_2$, and going along $\varphi(\alpha_m)$ gives $y$.
\end{proof}

 \begin{figure}[htp]
    \centering
    \includegraphics[width=6cm]{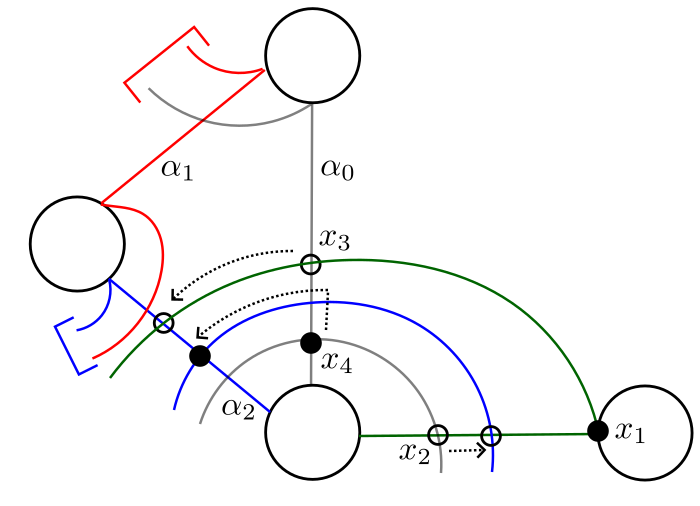}
    \caption[Cases for $s^+$.]{The different cases for $s^+$, where the point $x_i$ is of case $i$ in Definition \ref{SlidePositive}. The dashed arrows indicate the action of $s^+$.}
    \label{fig:SlideCases}
\end{figure}

Observe that for the first two cases, both $x$ and $s^+(x)$ belong to an arc that is not in $P$, and for the last two cases $x$ belongs to $\alpha_0$ and $s^+(x)$ belongs to either $\alpha_1$ or $\alpha_2$. We now define $s^-$ similarly.

\begin{definition}

Let $x \in \textrm{V}(\mathcal{T}^-)$. Then $s^-(x)$ is defined as:

\begin{enumerate}
        \item  If $x $ does not lie on $\alpha_0$ or $\varphi(\alpha_0)$, $s^-(x) = x$.
        
        \item If $x$ lies on the intersection of $\alpha_0$ with the image of some other arc $\beta$ from $\Gamma$, then $s^-(x)$ is the intersection point $z$ of $\varphi(\beta)$ with $\alpha_1$ or $\alpha_2$ such that the segment between $x$ and $z$ is contained in $P$.

        \item If $x$ lies on the intersection of $\varphi(\alpha_0)$ with some other arc $\beta$ from $\Gamma$, then $s^-(x)$ is the intersection point $z$ of $\beta$ with $\varphi(\alpha_1)$ or $\varphi(\alpha_2)$ such that the segment between $x$ and $z$ is contained in $\beta \cap \varphi(P)$.

        \item If $x$ lies on the intersection of $\alpha_0$ with its image, then $s^-(x)$ is the intersection point $z$ of $\alpha_m$ with $\varphi(\alpha_l)$ (where $m$ and $l$ can be $1$ or $2$ and not necessarily equal), obtained by first going along $\varphi(\alpha_0)$ to $\alpha_m$, and then along $\alpha_m$ to $z$, such that this path is contained in $P \cap \varphi(P)$.
        
    \end{enumerate}

\end{definition}

Observe that if we reverse the roles of the arcs and arc images, that is, we take our arc collections to be $\varphi(\Gamma)$ and $\varphi(\Gamma')$, and their images to be $\varphi^{-1}(\varphi(\Gamma))$ and $\varphi^{-1}(\varphi(\Gamma'))$, and we also reverse their orientation (so that the negative regions become positive regions), then the definition of $s^-$ is the same as the definition of $s^+$ using the original arc collections. This also means, by Proposition \ref{WellDefined}, that $s^-$ is well defined. \medskip

Also note that, away from $P \cup \varphi(P)$, the slide map does not change the intersection point. Moreover, if $x$ is a positive (respectively negative) intersection point then $s^+(x)$ and $s^-(x)$ will also be positive (respectively negative). \medskip

Finally, the slide maps are injective, so they give a bijection onto their image, and then we can refer to the inverse of these maps. We will use this to show that extended towers in $\Gamma'$ also induce extended towers in $\Gamma$.

Now we want to show that using the maps $s^+$ and $s^-$ we can construct an extended tower $\mathcal{T}'$ supported in $\Gamma'$ that has the same properties as $\mathcal{T}$. The properties that will be preserved will be being  nested, being completed/incomplete (or neither), being replete, and being nice. We will focus on the local effect of the slide maps on a neighbourhood of $P$, and a neighbourhood of $\varphi(P)$ (because outside these neighbourhoods the slide maps do not change anything). In particular, this means that if a positive (respectively negative) region $R$ is supported in $\Gamma$ and the image of each of its vertices under $s^+$ (resp. $s^-$) is itself, then $R$ is also supported in $\Gamma'$. If $R$ is not supported in $\Gamma'$, the local effect of the slide map on the vertices will induce one (or more) regions supported in $\Gamma'$. \medskip

We will need to check several things. First, that the induced regions form an extended tower, and then, that the properties of being nested, replete, nice, and completed or incomplete (or neither) are preserved. \medskip

First we will separate two cases, when $\alpha_2$ is $\varphi$-contained in $\alpha_1$ and when $\alpha_0$ is $\varphi$-contained in $\alpha_2$. The reason for this is that in these cases the image under the slide maps of a boundary point is an interior point (a boundary point is never two sided so we need to consider this separately). We will not consider the case where $\alpha_1$ is $\varphi$-contained in $\alpha_0$, because then the slide maps send an interior point to two boundary points (which are never two-sided). However, we do not need this case.

\begin{lemma} \label{Contained1}
    Suppose that $\alpha_2$ is $\varphi$-contained in $\alpha_1$. Then if $\mathcal{T}$ is an extended tower in $\Gamma$, there is an extended tower $\mathcal{T}'$ supported in $\Gamma'$ with the same properties as $\mathcal{T}$. Conversely, if $\mathcal{T}'$ is an extended tower supported in $\Gamma'$, there is an extended tower $\mathcal{T}$ supported in $\Gamma$ with the same properties as $\mathcal{T}$.

\end{lemma}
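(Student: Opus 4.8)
The plan is to prove Lemma~\ref{Contained1} by carefully analysing the local picture near $P$ and $\varphi(P)$ under the hypothesis that $\alpha_2$ is $\varphi$-contained in $\alpha_1$, and to show that the slide maps $s^{\pm}$ transport the regions of $\mathcal{T}$ to a collection of regions in $\Gamma'$ that still satisfies all the defining properties of an extended tower (Definition, repleteness, niceness, nestedness, completed/incomplete). First I would set up the local model: since $\alpha_2$ is $\varphi$-contained in $\alpha_1$, the $6$-gon $\varphi(P)$ decomposes as the basepoint triangle of $\varphi(\alpha_1)$ on $\alpha_2$, a positive splitting region $R_1$, its completion $R_1'$ (with an interior $\bullet$-point $x\in\alpha_1\cap\varphi(\alpha_1)$), and the region where $\varphi(\alpha_1)$ runs parallel to $\varphi(\alpha_0)$; this is exactly the picture from Proposition~\ref{RV}, second case. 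I would then record, case by case following Definitions~\ref{SlidePositive} and the one for $s^-$, what each vertex of a region of $\mathcal{T}$ maps to, observing (as the excerpt already notes) that $s^+$ and $s^-$ agree except when a component of $P\cap\varphi(P)$ is a $6$-gon.

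Next I would verify that the image regions form a bona fide extended tower. The key points: (i) each vertex in $\textrm{Dot}$ of the new collection still lies either on $\partial\Sigma$ or in $\textrm{Dot}$ of a negative region of $\mathcal{T}'$, and each $\circ$-point lies in $\textrm{Circ}(\mathcal{T}'^+)\cup\textrm{Circ}_\partial(\Gamma')$ --- here the $\varphi$-containment hypothesis is used precisely to handle the boundary $\bullet$-point of a region of $\mathcal{T}$ on $\alpha_0$ that gets sent by the slide map to the interior point $x\in\alpha_1\cap\varphi(\alpha_1)$, which becomes two-sided via the splitting pair $\{R_1,R_1'\}$ that we must adjoin to $\mathcal{T}'$; (ii) no corner of one new region lies in the interior of another, which follows because the slide maps are essentially "push to the nearest intersection point of $\Gamma'\cap\varphi(\Gamma')$" and preserve the acute-corner structure locally. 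Then I would check the four properties in turn: nestedness (the level filtration is preserved because the slide maps respect the ``$\textrm{Dot}(R)\subset\mathcal{T}^-_{i-1}\cup\partial\Sigma$'' relations, and the newly added splitting pair sits at the appropriate level, namely level $0$); niceness (positive regions stay inside $P\cup P'$ and interiors of negative regions stay off $\varphi(\Gamma')$, using that the slide maps only move things within $P\cup\varphi(P)$); repleteness (any negative region one could add to $\mathcal{T}'$ would, via $s^-$ inverted, give a negative region addable to $\mathcal{T}$, contradicting repleteness of $\mathcal{T}$); and completed/incomplete, which is the bookkeeping that two-sidedness of interior vertices is preserved, with the connecting vertex $y_0$ carried along or, in the $\varphi$-contained situation, created at $x$.

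For the converse direction I would use the injectivity of $s^{\pm}$ (stated in the excerpt) to define $\mathcal{T}$ from $\mathcal{T}'$ by applying $(s^{\pm})^{-1}$ to the vertices, and then run essentially the same verifications in reverse; the symmetry observation in the excerpt (that $s^-$ is $s^+$ for the reversed collections $\varphi(\Gamma),\varphi(\Gamma')$) lets me avoid redoing half the cases. The main obstacle I expect is the careful case analysis for $\circ$-points and $\bullet$-points that straddle $\alpha_0$, $\varphi(\alpha_0)$, and the interior intersection $\alpha_0\cap\varphi(\alpha_0)$: one must show that when $s^+(x)\neq s^-(x)$ (the $6$-gon component of $P\cap\varphi(P)$ case) the two images, together with the pieces of $\varphi(P)$ between them, assemble into the extra splitting pair rather than into an inconsistent configuration, and that this happens \emph{exactly} when $\alpha_2$ is $\varphi$-contained in $\alpha_1$ --- this is where the hypothesis of the lemma is genuinely needed and where the argument is most delicate. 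I would isolate this into a sub-claim and prove it by directly examining the local $6$-gon picture, appealing to Proposition~\ref{RV} and Definition~\ref{ContainedArcDef}, before feeding it into the general verification above.
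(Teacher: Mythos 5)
Your overall route is the same as the paper's: transport the regions of $\mathcal{T}$ with the slide maps, adjoin the splitting pair of $\{\alpha_1,\alpha_2\}$, observe that the only problematic vertex is the boundary $\bullet$-point $\alpha_0(1)$ whose image is the interior point of $\alpha_1\cap\varphi(\alpha_1)$ made two-sided by that splitting pair, check niceness, repleteness, nestedness and completed/incomplete, and invert the (injective) slide maps for the converse. That is exactly how the paper argues, so the core of the proposal is fine.

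However, your closing ``sub-claim'' is wrong, and it is worth flagging because you describe it as the delicate heart of the argument. You assert that the situation $s^+(x)\neq s^-(x)$ (a $6$-gon component of $P\cap\varphi(P)$) occurs \emph{exactly} when $\alpha_2$ is $\varphi$-contained in $\alpha_1$, and that the proof must show the two images assemble into the splitting pair in that case. In fact the opposite is true: under the hypothesis of this lemma every arc image meeting $\alpha_0$ must exit $P$ through $\alpha_1$ (and, reversing the roles of arcs and arc images, every arc meeting $\varphi(\alpha_0)$ meets $\varphi(\alpha_1)$), so on all vertices of $\mathcal{T}$ the two slide maps coincide and each region of $\mathcal{T}$ corresponds to a single region of $\Gamma'$ obtained by adding or removing rectangles; no region is broken up and no connecting region is needed. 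The $6$-gon components of $P\cap\varphi(P)$, where $s^+$ and $s^-$ genuinely differ, arise only in the later case where none of $\alpha_0,\alpha_1,\alpha_2$ is $\varphi$-contained in another, and they are handled in a separate lemma. So if you organised the completed/incomplete verification around that sub-claim you would be proving a false statement; the correct use of the $\varphi$-containment hypothesis is the simpler one you already identify in point (i), namely that $s^{\pm}(\alpha_0(1))$ is the interior $\bullet$-point of the completion $R_1'$, which is two-sided precisely because the splitting pair is added to $\mathcal{T}'$ (and, in the converse direction, that the splitting pair is the only part of $\mathcal{T}'$ whose vertices have no preimage, which is harmless because the preimage of that vertex is a boundary point).
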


\begin{proof}
    First let $\mathcal{T}$ be an extended tower in $\Gamma$. We will now see how each region in $\mathcal{T}$ induces a region supported in $\Gamma'$

    First, away from $P$ and $ \varphi(P)$ any region $R \in \mathcal{T}$ is unchanged since it is already supported in $\Gamma'$, and so the region induced by the slide maps is $R$ itself. For any region with an edge on the interior of $\alpha_0$, observe that an arc image intersecting $\alpha_0$ must leave $P$ by intersecting $\alpha_1$, and so the images of any vertex on $\alpha_0$ under the slide maps coincide, and is on $\alpha_1$. Then, we obtain the region $R'$ by simply adding or removing rectangles. Similarly, for a region with an edge on $\varphi(\alpha_0)$, we can see reversing the role of arcs and arc images that any arc intersecting $\varphi(\alpha_0)$ must also intersect $\varphi(\alpha_1)$, and so the image under the slide maps of a vertex on $\varphi(\alpha_0)$ lies on $\varphi(\alpha_1)$, and again the induced region is obtained by simply adding or removing rectangles. We can see this in Figure \ref{fig:ArcSlideEquivalent}. Observe that each region $R_i \in \mathcal{T}$ corresponds to a unique region $R_i'$ in $\Gamma'$ (sometimes $R_i = R_i'$), so we define $\mathcal{T}' = \{ R_i' \mid R_i \in \mathcal{T} \} \cup \{ R_1, R_2 \} $, where $\{ R_1, R_2 \}$ is the splitting pair of $\{ \alpha_1, \alpha_2 \} $.

    \begin{figure}[htp]
    \centering
    \includegraphics[height=5cm]{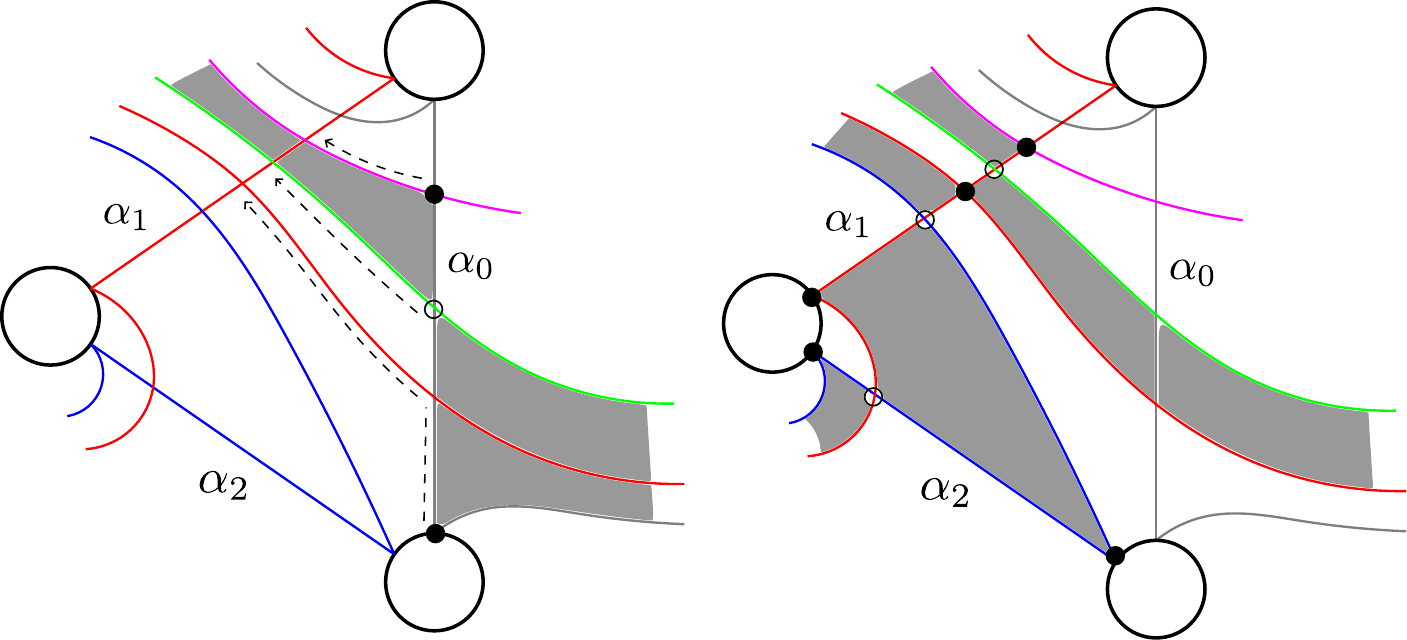}
    \caption[Arc-slide equivalent regions]{The regions obtained by adding or removing rectangles, when $\alpha_2 $ is $\varphi$-contained in $\alpha_1$, and the splitting pair. The dashed arrows indicate the action of the slide maps.}
    \label{fig:ArcSlideEquivalent}
\end{figure}

    Let us now check the properties of $\mathcal{T}'$. The positive regions are by construction contained in $P$ and thus disjoint from $\Gamma'$ in their interior, and we can see by switching the role of arcs and arc images that the interior of the negative regions is disjoint from $\varphi(\Gamma')$, so the extended tower is nice. It is also replete, because we cannot add any negative regions with $\circ$-points in $\textrm{Circ}(\mathcal{T}') \cup \textrm{Circ}_{\partial}(\Gamma)$, since $\mathcal{T}$ is replete and we have already used the $\circ$-point in the basepoint triangle of $\alpha_1$ and $\alpha_2$ for the splitting pair of regions.

    The positive region from the splitting pair is a level $0$ region, and so is the negative region from the splitting pair. This will imply that some of the regions may go up one level (for example, if there was a level $0$ region with a $\bullet$-point on $\alpha_0(1)$, the induced region is now a level $1$ region because it as an interior $\bullet$-point that is also a vertex of a level $0$ negative region), but if $\mathcal{T}$ was nested then so is $\mathcal{T}'$, as we can construct it level by level from the levels of $\mathcal{T}$.

    Finally, being completed, incomplete, or neither comes from which vertices are two-sided. For interior points that are the image of an interior point $x$, they will be two-sided if and only if $x$ is two-sided. So we only need to check the boundary point $y = \alpha_0(1)$ (recall $\alpha_0$ is given its orientation from being an arc of $\Gamma$) because its image $z = s^+(y)$ is an interior point (the image of the other endpoint $\alpha_0(0)$ is not an interior point so we do not need to check if it is two-sided). But $z$ is two-sided because we have included the positive splitting pair of regions in $\{ \alpha_1, \alpha_2 \}$ in $\mathcal{T}'$. Thus if $\mathcal{T}$ is completed then $\mathcal{T}'$ is completed, and if $\mathcal{T}$ is incomplete then so is $\mathcal{T}'$.

    For the converse, notice that the only points in an extended tower that do not have an inverse image are in the splitting pair of $\{\alpha_1, \alpha_2 \}$. Now, if we assume that $\mathcal{T}'$ is not supported in just $\{ \alpha_1, \alpha_2 \}$, then $ z= s^+(\alpha_0(1))$ is an interior $\bullet$-point that must be two-sided, so there must be a region $R'$ that is not supported in $\{ \alpha_1, \alpha_2 \}$. But then we get a region $R$ supported in $\Gamma$. Moreover, note that every point in $\alpha_2$ belongs to a region, and so every other region in $\mathcal{T}'$ has vertices that have an inverse image on $\Gamma$. Moreover, every vertex is two-sided if and only if the preimage is. Notice that we do get an extended tower by taking the regions induced by these points because the preimage of $z$ is on the boundary, so even though the negative region which has $z$ as a vertex does not induce a negative region in $\mathcal{T}$, it does not have to, precisely because $z$ is on the boundary.
\end{proof}

\begin{lemma} \label{Contained2}
    Suppose that $\alpha_0$ is $\varphi$-contained in $\alpha_2$. Then if $\mathcal{T}$ is an extended tower in $\Gamma$, there is an extended tower $\mathcal{T}'$ supported in $\Gamma'$ with the same properties as $\mathcal{T}$. Conversely, if $\mathcal{T}'$ is an extended tower supported in $\Gamma'$, there is an extended tower $\mathcal{T}$ supported in $\Gamma$ with the same properties as $\mathcal{T}$.

\end{lemma}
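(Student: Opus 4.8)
The plan is to run the proof of Lemma~\ref{Contained1} with the roles of the arcs of $P$ cyclically shifted, so that $\alpha_2$ takes over the part played there by $\alpha_1$. Since $\alpha_0$ is $\varphi$-contained in $\alpha_2$, the analysis in the second case of Proposition~\ref{RV} applies with $\alpha_2$ as the ``container'': $\varphi(\alpha_2)$ leaves $P$ by forming a basepoint triangle with $\alpha_0$, $\varphi(\alpha_0)$ leaves $P$ by intersecting $\alpha_2$, and $P$ carries the corresponding splitting pair $\{R,R'\}$ with interior $\bullet$-point $z$ on $\alpha_2$; after applying the slide maps, $\{R,R'\}$ is a pair of regions supported in $\{\alpha_1,\alpha_2\}$. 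The key point, exactly as in Lemma~\ref{Contained1}, is that there is precisely one endpoint of $\alpha_0$ whose image under the slide maps is an interior point, and that image is $z$; the image of the other endpoint remains on $\partial\Sigma$, so it never needs to be tested for two-sidedness.

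Given this, I would construct $\mathcal{T}'$ exactly as before. Away from $P\cup\varphi(P)$ every region of $\mathcal{T}$ is already supported in $\Gamma'$ and is unchanged; a region with an edge on the interior of $\alpha_0$ (resp.\ $\varphi(\alpha_0)$) is transported by $s^{+}/s^{-}$ by adding or deleting rectangles, using that any arc image crossing $\alpha_0$ must also cross $\alpha_2$ because $\varphi(P)$ is a disc (resp.\ the arcs-and-images reversed statement), so each $R_i\in\mathcal{T}$ gives a unique region $R_i'$ over $\Gamma'$, as in Figure~\ref{fig:ArcSlideEquivalent}. Set $\mathcal{T}'=\{R_i'\mid R_i\in\mathcal{T}\}\cup\{R,R'\}$. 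The four properties are then inherited by the same verifications as in Lemma~\ref{Contained1}: niceness, because the positive regions stay inside $P$ and, exchanging arcs with images, the negative regions keep interiors disjoint from $\varphi(\Gamma')$; repleteness, because $\mathcal{T}$ was replete and the single newly available $\circ$-point, the one on the $\alpha_0$--$\alpha_2$ basepoint triangle, is already used by $\{R,R'\}$; nestedness, because the splitting-pair regions lie in level $0$ and the levels of $\mathcal{T}'$ are built level-by-level from those of $\mathcal{T}$ (with some induced regions possibly shifted up by one); and being completed, incomplete, or neither, because two-sidedness of an interior vertex that is a slide-image of an interior vertex $w$ of $\mathcal{T}$ is equivalent to two-sidedness of $w$, while the one extra interior vertex $z$ is two-sided precisely because $\{R,R'\}\subset\mathcal{T}'$. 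For the converse, the only vertices of an extended tower over $\Gamma'$ without an $s^{\pm}$-preimage are those of $\{R,R'\}$; if $\mathcal{T}'$ is not supported in $\{\alpha_1,\alpha_2\}$ alone then $z$ is a two-sided interior $\bullet$-point, so some region through $z$ is not supported in $\{\alpha_1,\alpha_2\}$ and, the slide maps being injective, pulls back to a region over $\Gamma$; since every point of $\alpha_2$ lies on a region, the remaining regions of $\mathcal{T}'$ pull back as well and assemble into an extended tower $\mathcal{T}$ over $\Gamma$ with the same properties (the negative region at $z$ pulling back to a boundary rather than interior vertex being harmless, as in Lemma~\ref{Contained1}).

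The main obstacle is book-keeping rather than a new idea: $\alpha_0$ is still the arc being slid away (and still not the first arc of $\Gamma$), so this is genuinely not a relabelling of Lemma~\ref{Contained1}, and one must carefully pin down \emph{which} endpoint of $\alpha_0$ has interior slide-image, \emph{which} splitting pair of $P$ becomes the pair supported in $\{\alpha_1,\alpha_2\}$ after sliding, and with \emph{which} orientations, so that adjoining it to $\mathcal{T}'$ breaks neither niceness nor repleteness. The local picture of $\varphi(P)$ and the rectangle description of the slide maps are the same as in Lemma~\ref{Contained1}; the only real risk is an orientation or labelling slip in carrying that picture through the cyclic shift.
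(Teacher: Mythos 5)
Your skeleton is the paper's: rerun Lemma~\ref{Contained1} in the cyclically shifted configuration, transport regions by adding or removing rectangles (using that images crossing $\alpha_0$ must cross $\alpha_2$, and arcs crossing $\varphi(\alpha_0)$ must cross $\varphi(\alpha_2)$), and adjoin one extra pair of regions supported in $\{\alpha_1,\alpha_2\}$ to handle the single endpoint of $\alpha_0$ whose slide image is an interior point. The gap is that the determinations you explicitly leave open in your last paragraph (which endpoint, which pair, with which consequences) are precisely the content of this lemma, and the commitments you do make are incorrect. The pair one adjoins is the \emph{negative} splitting pair of $\{\alpha_1,\alpha_2\}$ — the positive region with $\bullet$-points $z\in\alpha_2\cap\varphi(\alpha_2)$ and a boundary endpoint of $\alpha_1$, together with the negative region where $\varphi(\alpha_1)$ and $\varphi(\alpha_2)$ are parallel — not the splitting pair witnessing that $\alpha_0$ is $\varphi$-contained in $\alpha_2$: that pair has edges on $\alpha_0$ and $\varphi(\alpha_0)$, its vertices are not vertices of regions of $\mathcal{T}$, so the slide maps do not act on it, and it is not supported in $\Gamma'$ before or after sliding. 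Moreover the pair is adjoined only when the relevant endpoint, namely $\alpha_0(0)$, is actually a vertex of some region of $\mathcal{T}$; adjoined unconditionally, its positive region carries an interior $\bullet$-point lying on no negative region of $\mathcal{T}'$, so $\mathcal{T}'$ would fail the extended-tower and nestedness conditions (and could spoil incompleteness).

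Your verifications inherit these errors. The nestedness claim is false as stated: since the positive region of the adjoined pair contains the interior $\bullet$-point $z$, it cannot lie in level $0$ (in contrast with Lemma~\ref{Contained1}); the correct bookkeeping, which is the genuinely new point here, places the pair in level $i+1$, where $i$ is the level of the region of $\mathcal{T}'$ through $z$ induced by the region of $\mathcal{T}$ containing $\alpha_0(0)$. Your repleteness check invokes ``the $\circ$-point on the $\alpha_0$--$\alpha_2$ basepoint triangle'', but that point is not an intersection of $\Gamma'$ with $\varphi(\Gamma')$ at all, since $\alpha_0\notin\Gamma'$; the relevant new $\circ$-point is the one on the basepoint triangle formed by $\varphi(\alpha_1)$ and $\alpha_2$, which is what the negative splitting pair uses. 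Finally, $z$ is not two-sided ``precisely because $\{R,R'\}\subset\mathcal{T}'$'': the adjoined pair supplies only the positive region through $z$, and the negative one must be the slide image of the region of $\mathcal{T}$ having $\alpha_0(0)$ as a vertex — which is exactly why the conditional addition matters; the converse direction likewise runs on the negative, not the positive, splitting pair. So while your approach is the intended one, the steps specific to this lemma are either missing or wrong as written.
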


\begin{proof}
    The argument is the same as for Lemma \ref{Contained1}, except now the arc images intersecting $\alpha_0$ all intersect $\alpha_2$, and the arcs intersecting $\varphi(\alpha_0)$ all intersect $\varphi(\alpha_2)$. Now we need to check the boundary point $y = \alpha_0(0)$, which is the one that is sent to an interior point. But now we the regions that we include in $\mathcal{T}'$ are the negative splitting pair of $\{\alpha_1, \alpha_2 \}$ (if the $\bullet$-point $\alpha_0(0)$ belongs to a region) so its image is two-sided, and it does not affect the properties of the extended tower.  Moreover, the regions from the splitting pair now belong to higher levels (specifically, if the positive region containing $\alpha_0(0)$ is on level $i$, then the regions from the splitting pair are on level $i+1$. Indeed, the positive region has a $\bullet$-point on the boundary and another one on a level $i$ region so it is on level $i+1$, and the negative region has a $\circ$-point on $\textrm{Circ}_{\partial}(\Gamma')$ and another one on a level $i+1$ positive region). The rest of the regions are on the same levels as before, so $\mathcal{T}'$ is also nested if $\mathcal{T}$ is nested. \medskip

    For the converse again the argument is the same using the negative splitting pair instead of the positive one.

\end{proof}

Now suppose none of the arcs $\{\alpha_0, \alpha_1, \alpha_2 \}$ is $\varphi$-contained in any of the others, that is, the image of $\alpha_0$ leaves $P$ by intersecting $\alpha_1$ and the image of $\alpha_2$ leaves $P$ by intersecting $\alpha_0$. Again the slide maps are the identity away from a neighbourhood of $P$, and a neighbourhood of $\varphi(P)$. Therefore, we will distinguish three cases, depending on how $\varphi(P)$ intersects $P$. 

As before, $\mathcal{T}'$ will be a collection of regions induced by the regions in $\mathcal{T}$. However, in the previous cases every region in $\mathcal{T}$ corresponded to a unique region supported in $\Gamma'$ that was obtained by adding and removing rectangles. Now, the slide maps might ``break up'' regions into several other regions as vertices connected by an edge in $\alpha_0$, or $\varphi(\alpha_0)$, could be mapped to different arcs, or arc images. In this case the following definition will be useful.

\begin{definition}
    Let $\mathcal{T}$ be an extended tower in $(\Sigma, \varphi, \Gamma)$. Two regions $R_1,R_2 \in \mathcal{T}^{\pm}$ are said to be \emph{connected by a region $R \in \mathcal{T}^{\mp}$} if  there exist points $x \in \textrm{Dot}(R) \cap \textrm{Dot}(R_1)$ and $y \in \textrm{Circ}(R) \cap \textrm{Circ}(R_2)$. We will then refer to the region $R$ as a \emph{connecting region}.
\end{definition}

Connecting regions supported in $ \{ \alpha_1, \alpha_2 \}$ might not have all of their vertices be images of a vertex in $\mathcal{T}$ (so they are not induced by regions in $\mathcal{T}$ the way the regions in the previous Lemmas were), but we will include them in some cases to preserve properties of $\mathcal{T}$. Similarly, we might need to add negative regions (whose $\bullet$-points are not images under the slide maps of vertices in $\mathcal{T}$) to ensure the resulting extended tower is replete.

\begin{lemma} \label{NoIntersection}
    Suppose that $P \cap \varphi(P)$ is just the basepoint triangles. Using the maps $s^+$ and $s^-$ we can construct regions giving an extended tower $\mathcal{T}'$ supported in $\Gamma'$ that has the same properties as $\mathcal{T}$. 
\end{lemma}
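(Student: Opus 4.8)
The plan is to follow the template already used in Lemmas~\ref{Contained1} and~\ref{Contained2}: first describe the regions that the slide maps produce out of $\mathcal{T}$, bundle them (together with a few extra regions forced by nestedness and repleteness) into a collection $\mathcal{T}'$ supported in $\Gamma'$, and then verify, one property at a time, that $\mathcal{T}'$ is an extended tower inheriting niceness, repleteness, nestedness, and being completed / incomplete / neither.

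First I would pin down the local picture. Since no arc of $\{\alpha_0,\alpha_1,\alpha_2\}$ is $\varphi$-contained in another and $P\cap\varphi(P)$ consists only of the basepoint triangles, each of $\varphi(\alpha_0),\varphi(\alpha_1),\varphi(\alpha_2)$ leaves $P$ forming a basepoint triangle (with $\alpha_1$, with $\alpha_2$, and with $\alpha_0$, respectively), and outside those triangles $P$ and $\varphi(P)$ are disjoint. In particular no component of $P\cap\varphi(P)$ is a $6$-gon, so $s^+$ and $s^-$ agree on every two-sided vertex by Definition~\ref{SlidePositive}. I would then argue, exactly as in Lemmas~\ref{Contained1} and~\ref{Contained2} but now allowing modifications both near $\alpha_0$ and near $\varphi(\alpha_0)$, that every region $R\in\mathcal{T}$ induces a single well-defined region $R'$ supported in $\Gamma'$, obtained from $R$ by attaching or deleting the rectangles of $P\cap\varphi(P)$ that its edges on $\alpha_0$ or $\varphi(\alpha_0)$ bound (see Figures~\ref{fig:SlideCases} and~\ref{fig:ArcSlideEquivalent}); because the overlap is minimal, this never breaks $R$ into several pieces. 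I set $\mathcal{T}'=\{R'\mid R\in\mathcal{T}\}$, augmented by any connecting regions supported in $\{\alpha_1,\alpha_2\}$ needed to keep the collection nested, and by the negative region(s) carrying a $\circ$-point on the new basepoint triangle $\varphi(\alpha_1)\cap\alpha_2$, included precisely when repleteness (Definition~\ref{Replete}) demands it.

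The verification then proceeds property by property. \emph{Niceness} (Definition~\ref{Nice}): positive regions of $\mathcal{T}$ lie in the disc cut out by $\Gamma$, and sliding only pushes their edges into $P$, so their images lie in the disc cut out by $\Gamma'$; reversing the roles of arcs and images shows the interiors of the negative regions stay disjoint from $\varphi(\Gamma')$. \emph{Repleteness}: since $\mathcal{T}$ is replete, any negative region one could still add to $\mathcal{T}'$ must use a $\circ$-point not already accounted for, which can only be one on the new basepoint triangle, and that region has been included when available. \emph{Nestedness} (Definition~\ref{Nested}): build $\mathcal{T}'$ level by level out of the levels of $\mathcal{T}$, noting that inserting the new $\{\alpha_1,\alpha_2\}$-regions can only raise levels by a bounded amount while preserving the nested structure, just as in Lemmas~\ref{Contained1} and~\ref{Contained2}. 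Finally, for \emph{completed / incomplete / neither}: the interior vertices of $\mathcal{T}'$ are the $s^{\pm}$-images of the interior vertices of $\mathcal{T}$ together with the interior vertices of the added $\{\alpha_1,\alpha_2\}$-regions; an image vertex is two-sided iff its preimage is (using $s^+=s^-$ on two-sided vertices), and the added vertices are two-sided by construction since no $6$-gon occurs in $P\cap\varphi(P)$. Hence ``all interior vertices two-sided except one connecting $\bullet$-point'' (Definition~\ref{Completed}) and ``every negative region shares a $\bullet$-point with a positive region'' (Definition~\ref{Incomplete}) both transfer, and the connecting vertex, when present, is unchanged.

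The main obstacle I expect is not a single computation but the bookkeeping in the middle step: verifying that $\mathcal{T}'$ really satisfies the ``no corner of one region in the interior of another'' condition after the slide, and that exactly the right connecting and repleteness-forcing regions in $\{\alpha_1,\alpha_2\}$ have been adjoined, and then tracking two-sidedness through those additions so that ``completed'' maps to ``completed'' and ``incomplete'' to ``incomplete'' on the nose. This is the tamest of the three intersection patterns (versus the cases where $\varphi(P)$ meets $P$ in larger polygons), so no new geometric phenomenon arises; the work is entirely in making the correspondence $R\leftrightarrow R'$ and the extra-region bookkeeping watertight.
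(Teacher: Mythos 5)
There is a genuine gap: your construction asserts that, because the overlap $P\cap\varphi(P)$ is minimal, every region $R\in\mathcal{T}$ induces a \emph{single} region $R'$ obtained by attaching or deleting rectangles, and that consequently a vertex of $\mathcal{T}'$ is two-sided iff its $s^{\pm}$-preimage is. This misses the central phenomenon of this case. Since neither $\alpha_1$ nor $\alpha_2$ is $\varphi$-contained in the other, arc images crossing $\alpha_0$ may leave $P$ through $\alpha_1$ \emph{or} through $\alpha_2$; hence a region with an edge on $\alpha_0$ can have its two vertices on $\alpha_0$ sent to different arcs. For a positive region this only extends it by a $6$-gon through the new basepoint triangle, but for a negative region it genuinely \emph{splits} it into two negative regions $R'_1,R'_2$ connected by a positive rectangle $R'_3$ (the paper's Figure~\ref{fig:DividingNegative}), and symmetrically, working near $\varphi(P)$, a positive region can split into two positive regions connected by a negative rectangle (Figure~\ref{fig:PositiveSplitRegion}). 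The new vertices created this way ($a$, $b$, and the $\circ$-point of the basepoint triangle) are \emph{not} slide-images of vertices of $\mathcal{T}$, so your one-to-one correspondence $R\leftrightarrow R'$ and the claim ``two-sided iff preimage two-sided'' both fail, and the decision of \emph{which} of $R'_1,R'_2,R'_3$ to include (the paper does this according to where the two-sided points of $R$ land) is exactly the bookkeeping your argument skips.

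This is not merely cosmetic: the preservation of the completed and incomplete properties rests on it. In the completed case, showing the $\circ$-point on the new basepoint triangle is two-sided requires arguing that some region of $\mathcal{T}$ has its $\alpha_0$-edge bounded by arc images exiting through different arcs, and then using the split (or the $6$-gon extension) to exhibit both a positive and a negative region through that point; your appeal to ``added vertices are two-sided by construction'' does not establish this. In the incomplete case, one must check, for a split negative region, the three subcases of where its two-sided $\bullet$-points map ($R'_1$ only, both, or $R'_2$ only — in the last case the new vertex $b$ rescues incompleteness), as well as the situation of Figure~\ref{fig:WeirdTwosided}, where a non-two-sided $\circ$-point on $\alpha_0$ acquires a two-sided image because repleteness forces a new negative region, and an additional positive region $R'_3$ must then be adjoined to keep $\mathcal{T}'$ incomplete; this also contradicts your biconditional on two-sidedness. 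Your overall template (construct $\mathcal{T}'$, then verify niceness, repleteness, nestedness, completed/incomplete) matches the paper, but without the splitting analysis and the attendant case checks the proof does not go through.
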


\begin{proof}
    Unlike before, given a region $R \in \mathcal{T}$, the image of its vertices under the slide maps might not induce a unique region in $\Gamma'$. We construct the extended tower $\mathcal{T}'$ as follows. For any region $R \in \mathcal{T} ^ {\pm}$, if the set of vertices $s^{\pm}(V(R))$ induces a unique region $R'$ supported in $\Gamma'$, set $R' \in \mathcal{T}'$. So suppose that it does not, that is, points from $s^{\pm}(V(R))$ lie on two different regions $R'_1$ and $R'_2$ connected by a third region $R'_3$. First assume that $R$ is a negative region. Then if there are two-sided $\bullet$-points $x,y \in V(R)$ such that $s^-(x) \in R'_1$ and $s^-(y) \in R'_2$, we set $R'_1, R'_2, R'_3 \in \mathcal{T}'$. If every two sided point in $R$ has its image in $R'_1$, then set $R'_1 \in \mathcal{T}'$ (but not $R'_2$ or $R'_3$). Now assume that $R$ is a positive region. We do the same as before but with $\circ$-points. If there are two-sided $\circ$-points $x,y \in V(R)$ such that $s^+(x) \in R'_1$ and $s^+(y) \in R'_2$, we set $R'_1, R'_2, R'_3 \in \mathcal{T}'$. If every two sided $\circ$-point in $R$ has its image in $R'_1$, then set $R'_1 \in \mathcal{T}'$, but not $R'_2$ or $R'_3$. 
    
    Finally, if after having done this there is a negative region $R'_1$ supported in $\Gamma'$ such that all its $\circ$-points are vertices of positive regions in $\mathcal{T}'$ or $\circ$-points on a basepoint triangle, set $R'_1 \in \mathcal{T}'$. This ensures that $\mathcal{T}'$ is replete. Further, if there exists a positive region $R'_3$ with its $\bullet$-points being on $\alpha_1(1)$ or $\bullet$-points of $R'_1$, set $R'_3 \in \mathcal{T}'$. This last case only happens if there is a $\circ$-point $x \in \alpha_0$ in a positive region $R \in \mathcal{T}$ that is not two-sided, but its image $s^+(x)$, which is a vertex of a positive region $R'$, is two sided after applying this rule. We will see later (Figure \ref{fig:WeirdTwosided}) that this ensures that if $\mathcal{T}$ is incomplete then so is $\mathcal{T'}$.  \medskip

    We now show what the induced regions are.

    If $P \cap \varphi(P)$ is just the basepoint triangles, for every interior intersection point $x \in \mathcal{T}$ on $\alpha_0$ (which will also be on an arc image $\varphi(\beta)$ with $\beta \neq \alpha_0)$), we have that $s^+(x) = s^-(x)$ and moreover the image under the slide map is obtained by going along $\varphi(\beta)$ until it leaves $P$. We can then see that, for a positive region $R$ with vertices $x,y$ on $\alpha_0$, there are two options. 
    
    The first option is that $s^+(x)$ and $s^+(y)$ lie on the same arc, which means that the local effect of the slide map on $R$ is just extending it by a rectangle.

    The second option is that $s^+(x)$ lies on $\alpha_1$ and $s^+(y)$ lies on $\alpha_2$. Then the local effect of the slide map on $R$ is extending it by a $6$-gon, where the extra sides are given by $\alpha_1$ from $s^+(x)$ to its endpoint, an edge of the basepoint triangle, and $\alpha_2$ from the $\circ$-point on the basepoint triangle to $s^+(y)$, see Figure \ref{fig:No_Intersection}. Moreover, observe that this will only happen once. \medskip 

    For a negative region $R$ with vertices $x,y$ on $\alpha_0$, there are again two options.

    The first option is that $s^-(x)$ and $s^-(y)$ lie on the same arc, which means that the local effect of the slide map on $R$ is just removing a rectangle. We can see this, together with the positive regions, in Figure \ref{fig:No_Intersection}.  Observe that in all these first cases the levels are preserved.

    \begin{figure}[htp]
    \centering
    \includegraphics[width=7cm]{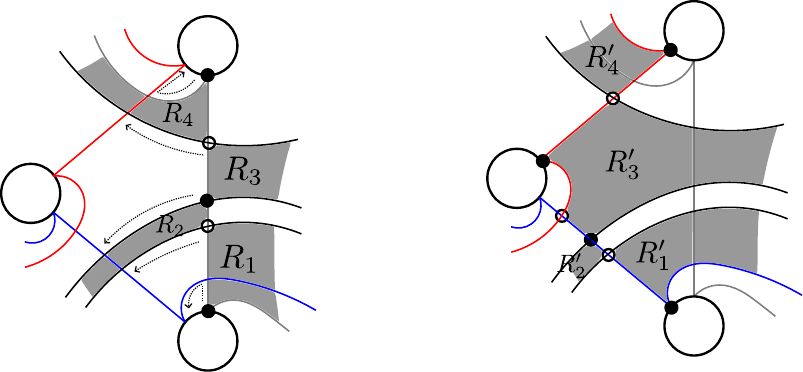}
    \caption[Inductive step for no intersection case.] {Obtaining the new regions by adding or removing rectangles (or a $6$-gon in the case of $R_3$). The dashed arrows indicate the action of $s^{\pm}$. Here we do not a priori know if the $\circ$-point on the basepoint triangle is two-sided, we deal with this case later.}
    \label{fig:No_Intersection}
\end{figure}

     The second option is that $s^-(x)$ lies on $\alpha_1$ and $s^-(y)$ lies on $\alpha_2$. In this case, $R$ must have boundary along another arc image intersecting $\alpha_1$ and $\alpha_2$ so that $R$ is a disc. Moreover, because $s^-(x)$ lies on $\alpha_1$ and $s^-(y)$ lies on $\alpha_2$, we do not have a unique induced region now, but two regions $R_1'$ and $R_2'$, one with an edge on $\alpha_2$ between $s^{-}(y)$ and a $\bullet$-point $b$ and another one with an edge on $\alpha_1$ between a $\circ$-point $a$ and $s^{-}(x)$, connected by a positive rectangle $R_3'$ with vertices $a,b$, the $\circ$-point on the basepoint triangle, and the boundary point on $\alpha_1$. We can see this in Figure \ref{fig:DividingNegative}. Notice that $a$, $b$, and the point on the basepoint triangle are not images of any point in $\mathcal{T}$ under the slide maps but these, together with the analogous case in $\varphi(P)$ instead of $P$, will be the only cases of this. 
     
     Because the rectangle $R'_3$ connects $R_1'$ and $R_2'$, both $a$ and $b$ are two-sided. Again, note that there can only be one region $R \in \mathcal{T}$ of this form, otherwise we would have vertices in the interior an edge of of another region, contradicting the definition of extended tower.  We now consider the levels of the regions. Assume the region $R$ is on level $i$. Then, since not all $\circ$-points of $R'_1$ come from vertices of $R$, $R'_1$ will be on level $j$ with $j \leq i$. Then, $R'_3$ will be on level $j+1$, and $R'_2$ will be on level $\max \{ j+1, i \}$.

     \begin{figure}[htp] 
    \centering
    \includegraphics[width=7cm]{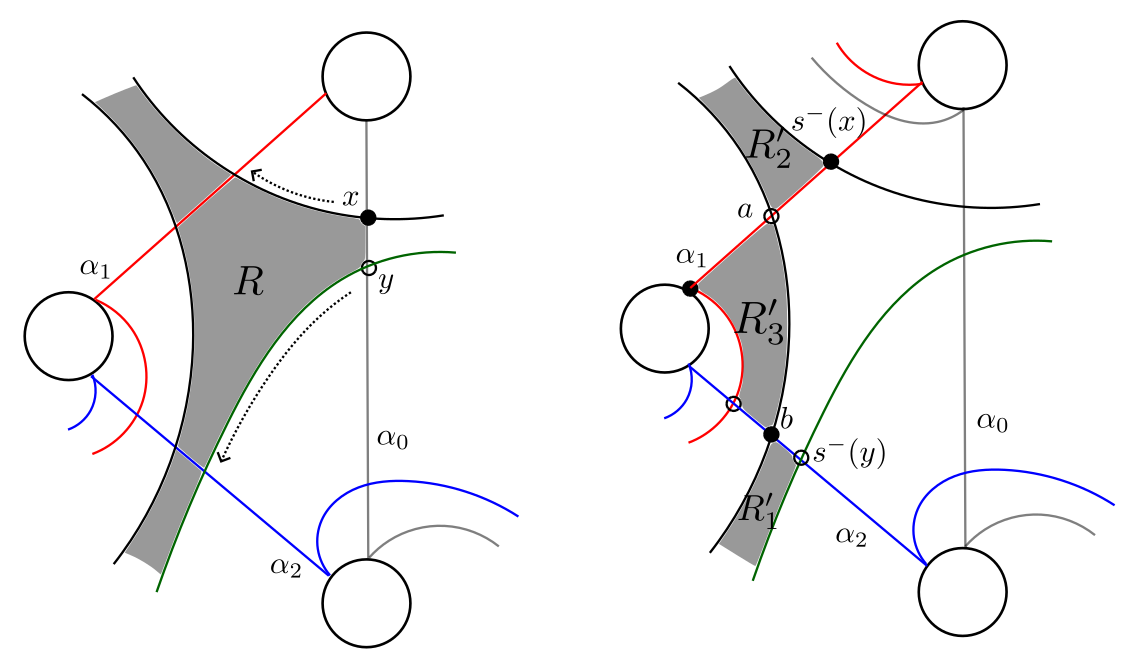}
    \caption[Dividing a negative region.]{Dividing a negative region into two negative regions $R'_1$ and $R'_2$ connected by a positive rectangle $R'_3$.}
    \label{fig:DividingNegative}
\end{figure}

    As for the neighbourhood of $\varphi(P)$, we obtain regions in the same way by reversing the role of the arcs and arc images, and changing the orientation of the arcs. Note that this switches the roles of $\alpha_1$ and $\alpha_2$ (in particular, when we are extending a region by a $6$-gon the point on the boundary that we use is on $\alpha_2$ and not $\alpha_1$, but note that neither of these points comes as the image under the slide maps). \medskip
    
    Finally, if there is a vertex $x \in \alpha_0$ in a positive region $R \in \mathcal{T}$ that is not two sided, but $s^+(x)$ is two sided because otherwise $\mathcal{T}'$ would not be replete, then $s^+(x)$ is a vertex of a negative region $R'_1$ supported in $\Gamma'$. Then we must have that $s^+(x)$ lies on $\alpha_2$, and $P$ divides what would be a region (but is not) making $x$ two-sided, and $R'_1$ is a region that makes $s^+(x)$ two-sided, see Figure \ref{fig:WeirdTwosided}. Then we also have that the region $R'_3$ has as its $\bullet$-points the point $\alpha_1(1)$, which is on the boundary, and the point $b$, which is a $\bullet$-point on a negative region of $\mathcal{T}'$, so $R'_3 \in \mathcal{T}'$. Notice that here the region $R'_1$ is on the same level as $R'$, and the region $R'_3$ is on the level above $R'_1$.

        \begin{figure}[htp]
    \centering
    \includegraphics[width=7cm]{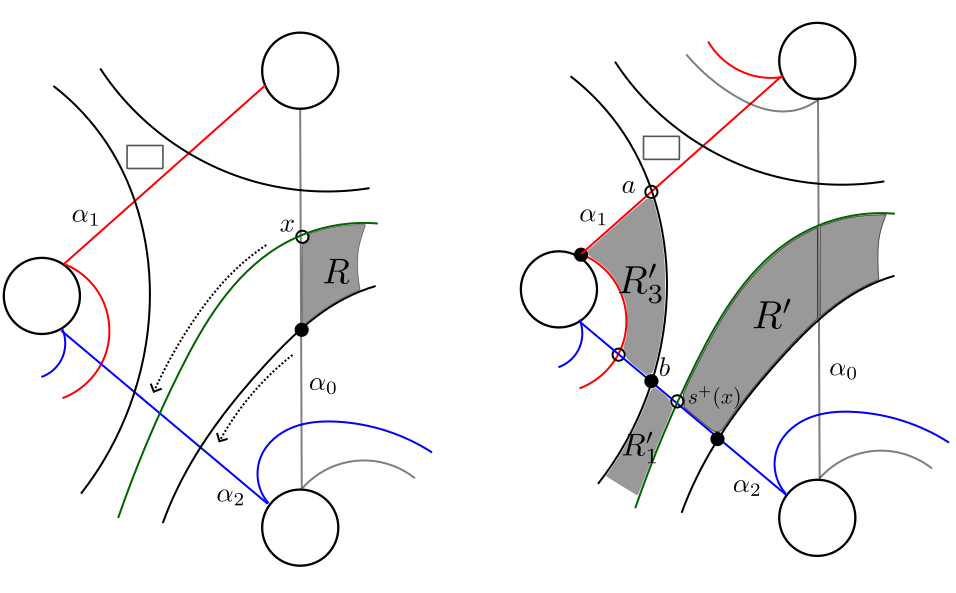}
    \caption[Two-sided points.]{The point $x$ is not two-sided but $s^+(x)$ is, because $R'_1$ is a region. However, this induces a positive region $R'_3$ and now $b$ is a two-sided $\bullet$-point in $R'_1$.}
    \label{fig:WeirdTwosided}
\end{figure} 
    
    We will now see how the properties of $\mathcal{T}$ are preserved. 

    As before, because of the way the induced regions are constructed, the result is an extended tower, which moreover is nice. It is also replete by construction.

    Also, our discussion on the levels of the regions implies that if $\mathcal{T}$ is nested then so is $\mathcal{T}'$. Being completed (or incomplete or neither) will depend on which vertices are two-sided.  

    Let us now see how the vertices being two-sided (or not) determines whether their images are two-sided. Again we will focus on a neighbourhood of $P$ and the analogous result for a neighbourhood of $\varphi(P)$ will follow from reversing the role of arcs and arc images. 

    First suppose that $\mathcal{T}$ is completed. This means that every interior vertex is two-sided, except for the connecting vertex $y_0$. But by construction this means that the images of these interior vertices in $\Gamma'$ are two-sided, except from $y_0$ (we imposed that $\alpha_0$ is not the first arc in $\Gamma$ so $s^-(y_0) = y_0$). Moreover the vertices $a$ and $b$ from the connecting region, if there is one, are also two-sided.

    There only remains to show that the $\circ$-point on the basepoint triangle formed by $\varphi(\alpha_1)$ and $\alpha_2$ is two-sided. Since $\mathcal{T}$ is completed, $\alpha_0(1)$ is the vertex of a positive region in $\mathcal{T}$, which means that $\alpha_2(1) = s^+(\alpha_0(1))$ is the vertex of a positive region in $\mathcal{T}'$. Note that not every arc image intersecting $\alpha_0$ leaves $P$ by intersecting $\alpha_2$, since initially $\varphi(\alpha_0)$ intersects $\alpha_1$ by hypothesis. Since every point in $\alpha_0$ belongs to a region, because $\mathcal{T}$ is completed, there must be a region where the arc images forming the edge on $\alpha_0$ must leave $P$ by intersecting different arcs. If this region is negative, it must split into two negative regions in $\mathcal{T}'$ connected by a (positive) rectangle which has the $\circ$-point on the basepoint triangle as one of its vertices (see Figure \ref{fig:DividingNegative}). If the region is positive, then it is extended by a $6$-gon as in Figure \ref{fig:No_Intersection}, and one of the vertices is immediately the $\circ$-point on the basepoint triangle. Therefore the $\circ$-point on the basepoint triangle is a vertex of a positive region. To see that it is also a vertex of a negative region, reverse the roles of arcs and arc images. Since $\mathcal{T}$ is completed, every point in $\varphi(\alpha_0)$ belongs to a region. But then there must be a region where the arcs forming the edge on $\varphi(\alpha_0)$ leave $\varphi(P)$ by intersecting different arc images. If this region is positive, it must split into two positive regions connected by a (negative) rectangle which has the $\circ$-point on the basepoint triangle as one of its vertices, see Figure \ref{fig:PositiveSplitRegion}. 
    
       \begin{figure}[htp]
    \centering
    \includegraphics[width=8cm]{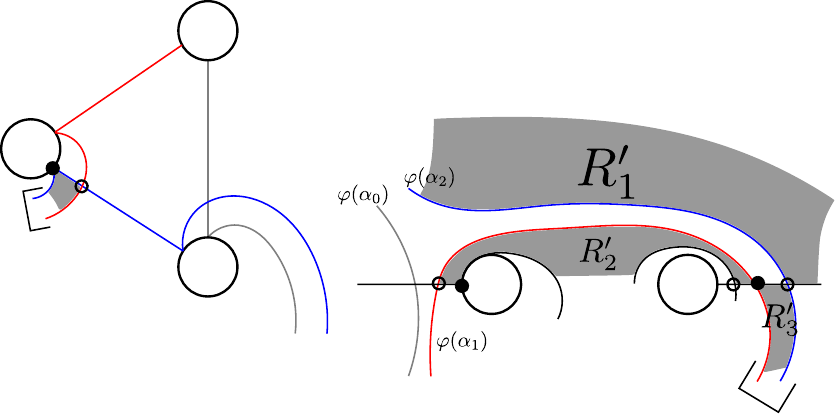}
    \caption[Using basepoint triangle.]{When there is a positive region such that the arcs forming the edge on $\alpha_0$ leave $\varphi(P)$ by intersecting different arc images, there are three induced regions $R'_1, R'_2, R'_3$ and the connecting region $R'_3$ uses the basepoint triangle.}
    \label{fig:PositiveSplitRegion}
\end{figure}

    If the region is negative, then it is extended by a $6$-gon, and one of the vertices is immediately the $\circ$-point on the basepoint triangle. Therefore the $\circ$-point on the basepoint triangle is a vertex of a negative region.

       \begin{figure}[htp]
    \centering
    \includegraphics[width=8cm]{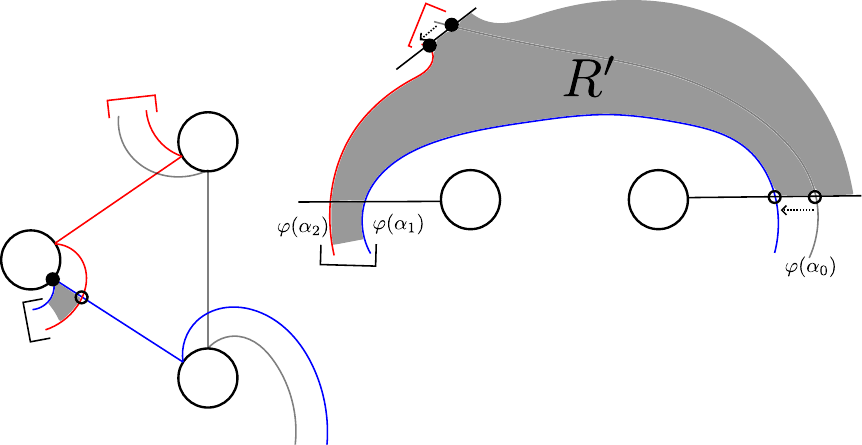}
    \caption[Using basepoint triangle.]{When there is a negative region such that the arcs forming the edge on $\alpha_0$ leave $\varphi(P)$ by intersecting different arc images, the induced region $R'$ uses the basepoint triangle.}
    \label{fig:NegativeExtendedRegion}
\end{figure}

    Now assume that $\mathcal{T}$ is incomplete. Let $R \in \mathcal{T}^-$. Then there exists a $\bullet$-point $x$ in $R$ that is two-sided. If the vertices of $R$ induce a unique region in $\mathcal{T}'$ then $s^-(x)$ is a two-sided $\bullet$-point. If they induce two regions $R'_1, R'_2$ connected by a positive region $R'_2$, as in Figure \ref{fig:DividingNegative}, there are three cases to consider. 

    First, if all the $\bullet$-points in $R$ that are two-sided have their images in $R'_1$, by construction we do not add $R'_2$ or $R'_3$ to $\mathcal{T}'$ and so the property that negative regions have a $\bullet$-point that is two-sided is preserved. The result is still a replete extended tower because the vertex $a$ is not a vertex of a positive region anymore, so we are not forced to add $R'_2$ to make $\mathcal{T}'$ replete, and there are no interior $\bullet$-points in $R'_2$ that are only vertices of positive regions, by hypothesis.

    Second, if some $\bullet$-points in $R$ have their images in $R'_1$ and some in $R'_2$, the property that negative regions have a $\bullet$-point that is two-sided is immediately preserved.

    Third, if all the $\bullet$-points in $R$ that are two-sided have their images in $R'_2$, note that the vertex $b$ which is a $\bullet$-point in $R'_1$ is now two-sided, and so the property that negative regions have a $\bullet$-point that is two-sided is preserved.

    Finally, assume that a $\circ$-point in $\alpha_0$ is not two-sided, but its image is, as in Figure \ref{fig:WeirdTwosided}. Then recall that $R'_1, R'_3 \in \mathcal{T}'$. Therefore, again the point $b$ is a $\bullet$-point in the negative region that is two-sided, so the property that negative regions have a $\bullet$-point that is two-sided is preserved.

    Therefore if $\mathcal{T}$ is incomplete $\mathcal{T}'$ is incomplete, and we are done.

\end{proof}

\begin{lemma} \label{RectangleIntersection}
    Suppose that the intersection of $P \cap \varphi(P)$ is the basepoint triangles and a collection of rectangles. Using the maps $s^+$ and $s^-$ we can construct regions giving an extended tower $\mathcal{T}'$ supported in $\Gamma'$ that has the same properties as $\mathcal{T}$. 
\end{lemma}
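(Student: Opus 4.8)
The plan is to follow the same strategy as in Lemma~\ref{NoIntersection}, constructing $\mathcal{T}'$ by applying the slide maps $s^{\pm}$ region-by-region, while carefully accounting for the new phenomenon: now components of $P \cap \varphi(P)$ include rectangles, not just the basepoint triangles. First I would set up the local picture. Since none of $\alpha_0,\alpha_1,\alpha_2$ is $\varphi$-contained in any of the others, $\varphi(\alpha_0)$ exits $P$ through $\alpha_1$ and $\varphi(\alpha_2)$ exits $P$ through $\alpha_0$; the extra rectangles in $P \cap \varphi(P)$ are bounded on two sides by arc images and on two sides by arcs, and each such rectangle records a place where an arc image that enters $P$ runs parallel to $\alpha_1$ or $\alpha_2$ for a while before leaving. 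I would first describe exactly how a vertex $x$ on $\alpha_0$ (or on $\varphi(\alpha_0)$) is moved by $s^{\pm}$ when its ``nearest'' point in $\Gamma' \cap \varphi(\Gamma')$ is reached by crossing one or more of these rectangles rather than going directly to a basepoint triangle.

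Next I would enumerate the induced regions, reusing the three mechanisms from Lemma~\ref{NoIntersection} — a region is extended or shrunk by a rectangle, a positive region is extended by a $6$-gon using a basepoint triangle, or a negative region is split into $R'_1, R'_2$ connected by a positive rectangle $R'_3$ — and add the new mechanism coming from the rectangles in $P \cap \varphi(P)$: when the edge of a region on $\alpha_0$ (resp. $\varphi(\alpha_0)$) straddles the region where $\varphi(P)$ has a rectangle overlapping $P$, the slide map may carry its two endpoints onto $\alpha_1$ and $\alpha_2$ through that rectangle, and the induced picture again decomposes into two regions joined by a connecting rectangle, but this time the connecting rectangle lies inside the overlap rectangle rather than abutting a basepoint triangle. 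I would use exactly the same repleteness-completion rule as in Lemma~\ref{NoIntersection} (add any negative region all of whose $\circ$-points are already accounted for, and add the positive region forced by a vertex on $\alpha_1(1)$), and I would track levels in the same way: a region that gets split drops the non-vertex-inherited piece to a level $j \le i$, the connecting rectangle to level $j+1$, and the remaining piece to $\max\{j+1,i\}$.

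Then I would verify the four properties are preserved, essentially verbatim from Lemma~\ref{NoIntersection}: niceness and repleteness hold by construction; nestedness follows from the level bookkeeping; and completed/incomplete/neither is controlled by which interior vertices are two-sided. The one point requiring genuine care is the two-sidedness of vertices that lie on the arcs and arc images bounding the overlap rectangles. For these, as in the earlier lemma, I would argue: if $\mathcal{T}$ is completed then every point of $\alpha_0$ and every point of $\varphi(\alpha_0)$ lies in a region, so every segment of an overlap rectangle's boundary that lies on $\alpha_1$ or $\alpha_2$ is covered by both a positive and a negative induced region (using the split-region and $6$-gon mechanisms in $P$ and their mirrors in $\varphi(P)$); if $\mathcal{T}$ is incomplete, the case analysis on where the two-sided $\bullet$-points of a negative region land (all in $R'_1$, split between $R'_1$ and $R'_2$, or all in $R'_2$, plus the ``weird two-sided'' case of Figure~\ref{fig:WeirdTwosided}) goes through unchanged because the connecting rectangle is a positive region exactly as before.

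The main obstacle I expect is purely combinatorial bookkeeping rather than any new idea: when several overlap rectangles stack up between an interior vertex $x$ on $\alpha_0$ and its image $s^{\pm}(x)$, one has to check that the chain of induced regions one obtains is still a legitimate extended tower — in particular that no corner of one induced region lands in the interior of another — and that the slide map's ``nearest point'' prescription is unambiguous through a stack of rectangles. I would handle this by noting that each overlap rectangle is itself a bigon-free component of $\varphi(P) \setminus \Gamma'$, so the arc images crossing it do so without forming bigons, and hence the induced regions inherit the acute-corner and injectivity-on-vertices conditions from $\mathcal{T}$ together with the local model of a rectangle being added or removed. Once this is in place, the conclusion that $\mathcal{T}'$ has the same properties as $\mathcal{T}$ follows as in Lemma~\ref{NoIntersection}, and we are done.
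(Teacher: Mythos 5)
Your proposal is correct and takes essentially the same route as the paper: the paper's proof simply observes that the construction of Lemma \ref{NoIntersection} goes through verbatim, the only new feature being that for vertices on $\varphi(\alpha_0)$ the slide maps travel along two sides of an overlap rectangle, which locally just adds or removes that rectangle from the induced regions (and $s^+=s^-$ wherever both are defined). Your extra case analysis and level bookkeeping are more detailed than the paper's brief argument but do not diverge from it in substance.
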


\begin{proof}
    This case is done in exactly the same way as Lemma \ref{NoIntersection}. Notice that now in the case where $x \in \varphi(\alpha_0)$, $s^{\pm}(x)$ are not obtained by following an arc image to its intersection with $\alpha_1$ or $\alpha_2$ but by following two sides of a rectangle (which is part of the intersection $P \cap \varphi(P)$). However, locally this only amounts to adding or removing this rectangle from the regions. Moreover, note that in this case we still have that $s^+(x) = s^-(x)$ for all points where both maps are defined.
\end{proof}

\begin{lemma}
    Suppose that the intersection $P \cap \varphi(P)$ contains a $6$-gon. Using the maps $s^+$ and $s^-$ we can construct regions giving an extended tower $\mathcal{T}'$ supported in $\Gamma'$ that has the same properties as $\mathcal{T}$. 
\end{lemma}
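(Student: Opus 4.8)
The plan is to run the same argument as in the proofs of Lemma~\ref{NoIntersection} and Lemma~\ref{RectangleIntersection}, working locally near $P$ and near $\varphi(P)$ (the slide maps being the identity elsewhere), and to isolate the one genuinely new feature of this case. By the discussion preceding Definition~\ref{SlidePositive}, the only situation in which $s^+$ and $s^-$ fail to agree is when a component $Q$ of $P\cap\varphi(P)$ is a $6$-gon, so I would first describe $Q$ combinatorially: it is a hexagon whose sides alternate between segments of $\alpha_0,\alpha_1,\alpha_2$ and segments of $\varphi(\alpha_0),\varphi(\alpha_1),\varphi(\alpha_2)$, and its vertex $x=\alpha_0\cap\varphi(\alpha_0)$ is an interior point, hence two-sided in $\mathcal{T}$, with $s^+(x)$ (go along $\alpha_0$, then along an arc image) landing on $\alpha_m$ while $s^-(x)$ (go along $\varphi(\alpha_0)$, then along an arc) lands on $\alpha_{m'}$, in general $m\neq m'$. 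Away from $Q$ — on the three basepoint triangles and on any rectangular components of $P\cap\varphi(P)$ — the constructions of Lemmas~\ref{NoIntersection} and~\ref{RectangleIntersection} apply unchanged: each region of $\mathcal{T}$ induces a region of $\Gamma'$ by adding or removing rectangles, with at most one negative region on $P$ splitting into $R'_1,R'_2,R'_3$ as in Figure~\ref{fig:DividingNegative}, at most one positive region on $\varphi(P)$ splitting as in Figure~\ref{fig:PositiveSplitRegion}, and the repair of Figure~\ref{fig:WeirdTwosided} where needed.

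Second, I would handle the regions of $\mathcal{T}$ meeting $Q$. Let $R^+\in\mathcal{T}^+$ and $R^-\in\mathcal{T}^-$ be the positive and negative regions with $x$ as a vertex; then $R^+$ induces a region of $\Gamma'$ with a vertex at $s^+(x)\in\alpha_m$ and $R^-$ induces one with a vertex at $s^-(x)\in\alpha_{m'}$. To reconcile these I would adjoin to $\mathcal{T}'$ the region of $\{\alpha_1,\alpha_2\}$ that sits inside $Q$ and joins $s^+(x)$ to $s^-(x)$: concretely the rectangle, or the hexagon through the basepoint triangle of $\varphi(\alpha_1)$ and $\alpha_2$, whose two ``new'' vertices are $s^+(x)$ and $s^-(x)$ and whose remaining vertices are an endpoint of $\alpha_1$ and, when the hexagon occurs, the $\circ$-point on that basepoint triangle. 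That $Q$ is a $6$-gon is exactly what guarantees this is a genuine region (acute corners, boundary correctly oriented by the arcs of $\Gamma'$, and no corner of it in the interior of another region since the slide maps act locally), and, declared a connecting region, it makes $s^+(x)$, $s^-(x)$ and the auxiliary vertices two-sided. Finally I would close up under the repleteness step as in Lemma~\ref{NoIntersection}: add any negative region all of whose $\circ$-points are vertices of positive regions of $\mathcal{T}'$ or lie on a basepoint triangle, together with the positive region forced above it.

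Third, I would check the four properties are inherited, following the bookkeeping of Lemma~\ref{NoIntersection}. \emph{Niceness} and \emph{repleteness} hold by construction (positive induced regions lie in $P\cup P'$, negative ones have interior disjoint from $\varphi(\Gamma')$ by the symmetric picture, and the $\circ$-point of the basepoint triangle of $\alpha_1,\alpha_2$ has been consumed so no further negative region can be added). For \emph{nestedness}, the splitting pair and the connecting region inside $Q$ raise the level of a region by at most one, so $\mathcal{T}'$ can be built level by level from $\mathcal{T}$. For \emph{completed/incomplete}: every interior vertex of $\mathcal{T}'$ other than $s^\pm(x)$ and the auxiliary vertices is two-sided iff its $s^\pm$-preimage is, and the connecting region inside $Q$ was added precisely to make those remaining vertices two-sided; hence if $\mathcal{T}$ is completed the unique non-two-sided interior $\bullet$-point $y_0$ stays the unique such point of $\mathcal{T}'$, and if $\mathcal{T}$ is incomplete every negative region of $\mathcal{T}'$ still has a two-sided $\bullet$-point, the three sub-cases for where the two-sided $\bullet$-point of a split negative region goes being handled exactly as in Lemma~\ref{NoIntersection}. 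The converse — an extended tower supported in $\Gamma'$ induces one in $\Gamma$ with the same properties — follows from the injectivity of $s^{\pm}$ as in the previous lemmas, the only vertices without a preimage being those of the splitting pair of $\{\alpha_1,\alpha_2\}$ and of the connecting region inside $Q$.

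The step I expect to be the main obstacle is the one forced by $s^+(x)\neq s^-(x)$: verifying that the region inserted inside the $6$-gon $Q$ is well defined and is a legitimate region, that adjoining it keeps $\mathcal{T}'$ a nice, replete, nested extended tower, and — most delicately — that it restores two-sidedness so that neither an incomplete tower becomes completed nor a completed one becomes incomplete. Everything else is a transcription of Lemmas~\ref{NoIntersection} and~\ref{RectangleIntersection}.
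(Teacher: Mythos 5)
Your overall skeleton (work locally near $P$ and $\varphi(P)$, reuse the constructions of Lemmas~\ref{NoIntersection} and~\ref{RectangleIntersection}, insert extra regions inside the hexagonal component to restore two-sidedness, then do the level/property bookkeeping) is the same as the paper's, and you correctly isolate $s^+(x)\neq s^-(x)$ as the new phenomenon. But there are three concrete gaps. First, your description of the hexagon is too restrictive: a $6$-gon component $A$ of $P\cap\varphi(P)$ need not have a vertex at $\alpha_0\cap\varphi(\alpha_0)$. The paper treats separately the case where the edge of $A$ on $\varphi(\alpha_0)$ runs from $\alpha_1$ to $\alpha_2$; there no slide map acts at all, the (necessarily negative) region crossing $A$ is extended by a $6$-gon through the basepoint triangle (Figure~\ref{fig:LastCase}), and when $\mathcal{T}$ is completed one still has to produce a \emph{positive} region through the basepoint-triangle $\circ$-point, which requires the argument that every point of $\alpha_0$ lies on a region and that the region crossing $\alpha_0$ there must be positive. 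Your proposal never addresses this case. Second, the claim that $x=\alpha_0\cap\varphi(\alpha_0)$ is ``an interior point, hence two-sided in $\mathcal{T}$'' is false: the definition of an extended tower only forces interior $\bullet$-points of positive regions onto negative regions (and dually), so a $\bullet$-point lying only on a negative region need not be two-sided. The paper must split on this, and in the not-two-sided subcase the correct move is the opposite of yours: one must \emph{not} add the extra regions $R'_3,R'_4$, since the negative rectangle $R'_4$ would have no two-sided $\bullet$-point and incompleteness would be destroyed.

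Third, your single ``connecting region'' joining $s^+(x)$ to $s^-(x)$ cannot restore two-sidedness. After sliding, $s^+(x)$ is a vertex of the induced \emph{positive} region and $s^-(x)$ of the induced \emph{negative} region, so the former needs a negative partner and the latter a positive one; a single new region, whatever its sign, can supply at most one of these (if it contained both points it would give one of them two regions of the same sign, which is not two-sidedness). The paper accordingly adds a \emph{pair}: a positive rectangle $R'_3$ (through the basepoint triangle, with vertex $s^-(x)$) and a negative rectangle $R'_4$ (along the strip where $\varphi(\alpha_1)$ and $\varphi(\alpha_2)$ are parallel, with vertex $s^+(x)$), as in Figures~\ref{fig:6-gonIntersection} and~\ref{fig:6-gon2}; moreover $s^+(x)$ and $s^-(x)$ are separated by that strip, so a single region with both as vertices does not exist in general. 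Since your verification of ``completed stays completed / incomplete stays incomplete'' rests on this one region making $s^\pm(x)$ and the auxiliary vertices two-sided, that part of the argument does not go through as written; it needs the paper's two-region insertion together with the case analysis on whether $x$ is two-sided and on which of $\varphi(\alpha_1),\varphi(\alpha_2)$ meets $\alpha_0$ (which determines the sign of $x$).
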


\begin{proof}
    The construction of $\mathcal{T}'$ is done in the same way as Lemma \ref{NoIntersection}. Let $A$ be the $6$-gon contained in $P \cap \varphi(P)$. We only need to focus on the regions given by the intersection points in $A$, because all the others have been covered by Lemmas \ref{NoIntersection} and \ref{RectangleIntersection}. There are two cases; either $\varphi(\alpha_0)$ intersects $\alpha_0$ as an edge of $A$, or it does not. \medskip

    First assume it does, and call this intersection point $x$. Then $s^+(x) \neq s^-(x)$ (if they are both defined), but for a region $R$ (either positive or negative) we will see that the induced region $R'$ is obtained by adding and removing rectangles, see Figure \ref{fig:SlideMap} for an example.

     \begin{figure}[htp]
    \centering
    \includegraphics[width=8cm]{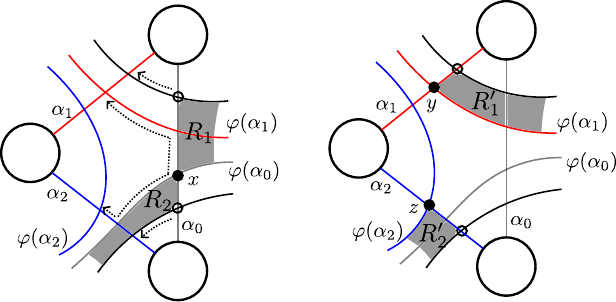}
    \caption[The case where $s^+(x) \neq s^-(x)$]{The case where $s^+(x) \neq s^-(x)$. Here $y = s^+(x) $ and $z = s^-(x)$. The dashed arrows indicate the action of $s^{\pm}$. The regions $R'_1, R'_2$ are obtained from $R_1, R_2$ by adding and removing rectangles.}
    \label{fig:SlideMap}
    \end{figure}

    Now, this case is further divided into two cases. First, if $\varphi(\alpha_1)$ also intersects $\alpha_0$, then the orientation of the arc images forces $x$ to be a $\bullet$-point, as we can see in Figure \ref{fig:6-gonIntersection}. This means that $x$ is a vertex of a negative region $R_2$, and we get an induced negative region $R'_2$ supported in $\Gamma'$.

    Now if $x$ is two-sided, it is also a vertex of a positive region $R'_1$. We also have a positive rectangle $R'_3$ using $\alpha_1$, $\alpha_2$, a side of the basepoint triangle, and either $\varphi(\alpha_1)$ or $\varphi(\alpha_2)$ (depending on which one does not intersect $\alpha_0$). Moreover, one of the vertices of this region is $s^-(x)$. This region can be completed with a rectangle $R'_4$ using the part where $\varphi(\alpha_1)$ and $\varphi(\alpha_2)$ are parallel, and one of the vertices of this region is $s^+(x)$. This means that both $s^+(x)$ and $s^-(x)$, together with the new points we have introduced (including the $\circ$-point on the basepoint triangle) are two-sided. In particular if $\mathcal{T}$ was completed, so is $\mathcal{T}'$, because all new points introduced are two-sided. Similarly, if $\mathcal{T}$ was incomplete, so is $\mathcal{T}'$, because both the negative regions that we have introduced have $\bullet$-points in common with a positive region. We can see these regions in Figure \ref{fig:6-gonIntersection}.\medskip

    \begin{figure}[htp]
    \centering
    \includegraphics[width=7cm]{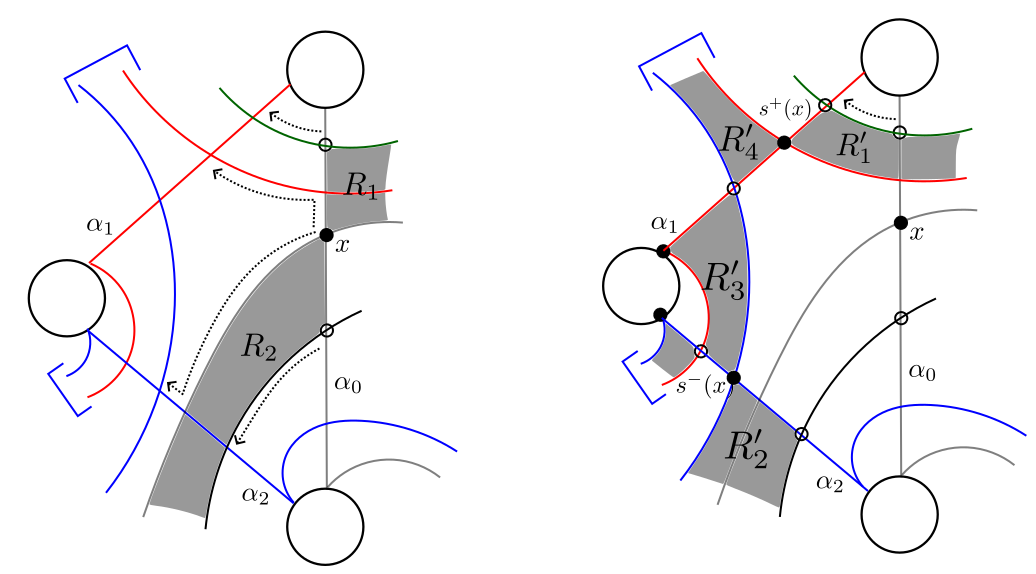}
    \caption[Inductive step for the first $6$-gon intersection case.]{The regions induced by a positive region $R_1$ and a negative region $R_2$ with a vertex on $x$. The dashed arrows indicate the action of $s^{\pm}$.}
    \label{fig:6-gonIntersection}
    \end{figure}

    If $x$ is not two-sided, then $\mathcal{T}$ cannot have been completed. So assume it was incomplete. Then $R_2$ has some $\bullet$-point $y$ in common with a positive region, which means that $R'_2$ has some $\bullet$-point in common with a positive region, and now by construction we do not include $R'_3$ and $R'_4$ in $\mathcal{T}'$, because $R'_4$ does not have any $\bullet$-points in common with a positive region in $\mathcal{T}'$. However the result is still an extended tower, which is nice and replete, using the same argument as in Lemma \ref{NoIntersection}. Moreover, the property that negative regions have a $\bullet$-point that is two-sided is preserved.

    To see that $\mathcal{T}'$ is nested, suppose that $R_2$ is on level $i$. Then $R_1$ is on level $j$, with $j > i$ (because one of its $\bullet$-points is a vertex of a level $i$ region, but it might have other vertices belonging to regions on higher levels). Then, $R'_2$ is also on level $i$, $R'_3$, is on level $i+1$, and $R'_4$ is also on level $i+1$. Therefore, $R'_1$ is on level $\max \{j, i+2 \}$. \medskip

    Second, if $\varphi(\alpha_2)$ intersects $\alpha_0$, then the orientation of the arc images forces $x$ to be a $\circ$-point. This means that it is a vertex of a positive region $R_1$, which induces a positive region $R'_1$ supported in $\Gamma'$. Now, $s^+(x)$ is two-sided because we can use the negative region $R'_4$ where $\varphi(\alpha_1)$ and $\varphi(\alpha_2)$ are parallel, and this in turn gives a positive rectangle $R'_3$ as before, where one of the vertices is $s^-(x)$. This means that the interior $\bullet$-point that we introduced in $R'_4$ is two-sided, so the property that negative regions have a $\bullet$-point that is two-sided is preserved, so if $\mathcal{T}$ is incomplete then so is $\mathcal{T}'$. Moreover, if $\mathcal{T}$ is completed, every interior vertex, in particular $x$, is two-sided, which means that it is a vertex of a negative region $R_2$, which induces a negative region $R'_2$, which means that $s^-(x)$ is two-sided, and $\mathcal{T}'$ is completed. We can see this case in Figure \ref{fig:6-gon2}. Similarly as before we can find the levels of these new regions, and thus $\mathcal{T}' $ is nested.

    \begin{figure}[htp]
    \centering
    \includegraphics[width=9cm]{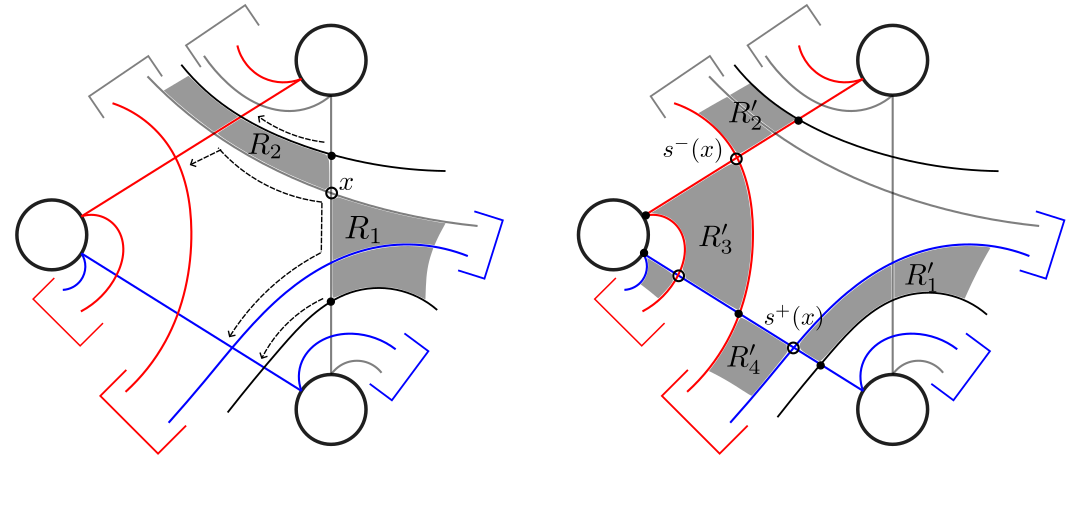}
    \caption[The second $6$-gon intersection case.]{The regions induced by a positive region $R_1$ and a negative region $R_2$ with a vertex on $x$. The dashed arrows indicate the action of $s^{\pm}$.}
    \label{fig:6-gon2}
    \end{figure}

    So now assume that $\varphi(\alpha_0)$ does not intersect $\alpha_0$ as an edge of $A$. This means that it intersects $\alpha_1$ and $\alpha_2$. Moreover, the segment of $\varphi(\alpha_0)$ that is an edge of $A$ can only be a part of a negative region $R$. But since this region cannot intersect $\varphi(P)$, the intersection with $P$ must be a rectangle. But since there is no intersection points with $\alpha_0$ there is no action of the slide maps, and the induced region is simply given by extending using a $6$-gon. We can see this region in Figure \ref{fig:LastCase}. There only remains to show that, if $\mathcal{T}$ is completed, the $\circ$-point on the basepoint triangle is two-sided, that is, it is the vertex of a positive region. But if $\mathcal{T}$ is completed, every point on $\alpha_0$ is part of a region, and thus there must be a region in $\mathcal{T}$ with a vertex between $\alpha_0(0)$ and the intersection between $\varphi(\alpha_2)$ and $\alpha_0$, and a vertex between the intersection point of $\varphi(\alpha_1)$ and $\alpha_0$ and $\alpha_0(1)$. Now notice that, because the region $R$ intersects $P$, such a region must necessarily be positive, otherwise we would have a corner of a region in the interior of a region, contradicting the definition of extended tower. But then the induced region supported in $\Gamma'$ uses the $\circ$-point on the basepoint triangle, and we are done. 
    
    Again as before we can assign a level to these regions, so $\mathcal{T}'$ is nested.

\begin{figure}[htp]
    \centering
    \includegraphics[width=7cm]{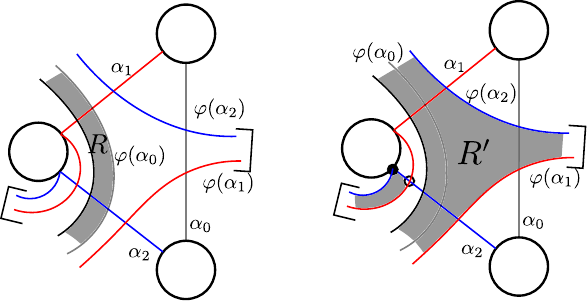}
    \caption[Extending a region with a $6$-gon]{If $\varphi(\alpha_0)$ does not intersect $\alpha_0$ we can extend this region using a $6$-gon (like we did for positive regions in $P$, see Figure \ref{fig:No_Intersection}).}
    \label{fig:LastCase}
    \end{figure}
    
\end{proof}

We are only left to show that if we have an extended tower $\mathcal{T}'$ supported in $\Gamma'$, there is an extended tower $\mathcal{T}$ supported in $\Gamma$ with the same properties as $\mathcal{T}'$.

Recall that the slide maps do not provide a bijection between vertices in $\mathcal{T} $ and vertices in $\mathcal{T}'$, because there are new vertices in $\mathcal{T}'$ that we have added. However, both $s^+$ and $s^-$ are injective, and so form a bijection with their image. Moreover, the vertices without an inverse are those that lie on endpoints of a segment disjoint from $\alpha_0$ contained in an arc image, or a segment disjoint from $\varphi(\alpha_0)$ contained in an arc. Also, in the definition of the slide maps the extended tower $\mathcal{T}$ is only used to specify the domain (we only consider vertices of regions in $\mathcal{T}$). 

Thus, given an extended tower $\mathcal{T}'$ supported in $\Gamma'$, we can define a set of intersection points of $\Gamma \cap \varphi(\Gamma)$ as follows.

Let $R' \in \mathcal{T}'^+$ and $x$ a vertex of $R'$. If the inverse of the slide map makes sense, that is, there exists a vertex $y \in \Gamma \cap \varphi(\Gamma)$ such that $s^+(y) = x$, then define $y = (s^+)^{-1}(x)$ the \emph{preimage} of $x$. We proceed analogously with negative regions and $s^-$. Now consider the set of all such preimages, i.e $\mathcal{V}(\mathcal{T}') = \{ y \in \Gamma \cap \varphi(\Gamma) \mid s^{+}(y) \textrm{ or } s^-(y) \in V(\mathcal{T} \}$.

\begin{lemma}
    Let $\mathcal{T}'$ be a completed (respectively incomplete) extended tower in $\Gamma'$, which is not just supported in $\{ \alpha_1, \alpha_2 \}$. Then the set $\mathcal{V}(\mathcal{T}')$ induces an extended tower $\mathcal{T}$ supported in $\Gamma$ that is completed (respectively incomplete).

\end{lemma}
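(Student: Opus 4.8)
The plan is to run the constructions of Lemmas~\ref{Contained1}--\ref{RectangleIntersection} (and of the lemma treating the case where $P\cap\varphi(P)$ contains a $6$-gon) in reverse, exploiting that $s^+$ and $s^-$ are injective and hence invertible on their images. Since $\mathcal{T}'$ is not supported only in $\{\alpha_1,\alpha_2\}$, some region of $\mathcal{T}'$ meets an arc of $\Gamma'\setminus\{\alpha_1,\alpha_2\}$ or its image, so $\mathcal{V}(\mathcal{T}')$ is nonempty. The first step is to pin down which vertices of $\mathcal{T}'$ have no preimage: inspecting the forward construction, these are exactly the vertices introduced ``by hand'' — the vertices $a,b$ of a connecting rectangle (as in Figure~\ref{fig:DividingNegative}), the $\circ$-points on the basepoint triangles of $\{\alpha_1,\alpha_2\}$ and of $\{\varphi(\alpha_1),\varphi(\alpha_2)\}$, and the vertices of the completing rectangles in the $6$-gon-intersection case. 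Equivalently, a vertex of $\mathcal{T}'$ lacks a preimage precisely when it lies on a segment of an arc image disjoint from $\alpha_0$, or a segment of an arc disjoint from $\varphi(\alpha_0)$, that is cut off by $P$ or $\varphi(P)$.

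Next I would build $\mathcal{T}$ region by region. A region of $\mathcal{T}'$ whose interior is disjoint from $P\cup\varphi(P)$ is already supported in $\Gamma$ and is kept unchanged. A region of $\mathcal{T}'$ with an edge on $\alpha_1$ or $\alpha_2$ (respectively on $\varphi(\alpha_1)$ or $\varphi(\alpha_2)$) is pushed back across $\alpha_0$ (respectively $\varphi(\alpha_0)$) by applying $(s^{\pm})^{-1}$ to its vertices; concretely this reverses the ``add or remove a rectangle'' and ``extend by a $6$-gon'' moves. When two regions $R'_1,R'_2$ of $\mathcal{T}'$ are connected by a rectangle $R'_3$ all of whose extra vertices lack a preimage, the rigidity of the $6$-gon $P$ (together with the control on how arc images cross it used in Proposition~\ref{WellDefined}) guarantees that the $(s^{\pm})^{-1}$-images of the edges of $R'_1$ on $\alpha_1$ and of $R'_2$ on $\alpha_2$ line up to a single edge on $\alpha_0$, so $R'_1$ and $R'_2$ merge into one region of $\Gamma$; the symmetric statement holds in $\varphi(P)$. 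One then checks, exactly as in the forward lemmas, that the corners of the resulting regions are acute, that no corner of one lies in the interior of another, and that the Dot/Circ containments hold, so $\mathcal{T}$ is an extended tower; that it is nice and nested follows by reversing the level bookkeeping of those lemmas (the levels of pushed-back regions drop by exactly the amount they were raised, and a level-$0$ positive region persists); and repleteness follows because a negative region one could add to $\mathcal{T}$ would, after sliding, be addable to $\mathcal{T}'$, contradicting its repleteness. By construction the forward operation applied to $\mathcal{T}$ returns $\mathcal{T}'$ (including any repleteness-forced regions), so by the biconditional ``same properties'' in Lemmas~\ref{Contained1}--\ref{RectangleIntersection} and the $6$-gon lemma it suffices to know $\mathcal{T}$ is an extended tower to transfer the completed/incomplete status.

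Finally I would verify directly that $\mathcal{T}$ is completed (respectively incomplete) when $\mathcal{T}'$ is, as a sanity check on the previous step. The key point is that the preimage under $s^{\pm}$ of a two-sided interior vertex of $\mathcal{T}'$ is again two-sided, since the positive and negative regions of $\mathcal{T}'$ meeting such a vertex push back to regions of $\mathcal{T}$ meeting its preimage; and the connecting vertex $y_0$ survives because $\alpha_0$ is not the first arc of $\Gamma$, whence $s^-(y_0)=y_0$. In the completed case this shows every interior vertex of $\mathcal{T}$ other than $y_0$ is two-sided. In the incomplete case, every negative region of $\mathcal{T}'$ has a two-sided $\bullet$-point; tracing this through the merge/split dictionary (and through the WeirdTwosided configuration of Figure~\ref{fig:WeirdTwosided}) shows the corresponding negative region of $\mathcal{T}$ has one too. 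The main obstacle is precisely this bookkeeping: verifying that in \emph{every} case of the forward construction — the splitting of a negative region into $R'_1,R'_2,R'_3$, the $6$-gon-intersection cases with their extra rectangles, and the WeirdTwosided configuration — the reverse assignment is consistent, loses no vertex, and produces a collection whose forward image is $\mathcal{T}'$, so that the already-established biconditionals may be applied.
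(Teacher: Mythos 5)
There is a genuine gap, and it is the part of the argument the paper spends the most effort on: showing that the induced collection is a \emph{nonempty} extended tower. Your only justification is that $\mathcal{V}(\mathcal{T}')$ is nonempty because $\mathcal{T}'$ is not supported in $\{\alpha_1,\alpha_2\}$ alone, but a nonempty set of preimage vertices does not give a nonempty tower: the reverse construction must \emph{delete} regions in some configurations (when a positive connecting region in $P$ is attached to only one negative region --- the situation of Figure \ref{fig:WeirdTwosided} read backwards --- the corresponding negative region cannot be kept in $\mathcal{T}$, which can force dropping a positive region whose $\bullet$-points lose their negative partners, and so on down the levels), and a priori this cascade could empty the collection. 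The paper closes this hole with a dedicated argument: assuming the induced tower were empty, it produces a positive region of $\mathcal{T}'$ with a vertex on $\varphi(\alpha_1)$ or $\varphi(\alpha_2)$ lacking a preimage, and then uses the nestedness/level bookkeeping (Figures \ref{fig:Levels}, \ref{fig:EmptyTower}, \ref{fig:EmptyTower2}) to show that $\mathcal{T}'$ would then either contain a negative region with no two-sided $\bullet$-point (so not incomplete) or a non-two-sided vertex on $\varphi(\alpha_1)$ (so completed only if supported in $\{\alpha_1,\alpha_2\}$), a contradiction either way. Nothing in your outline plays this role.

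Your route for transferring the completed/incomplete property is also not secured. You propose to prove that the forward construction applied to $\mathcal{T}$ returns exactly $\mathcal{T}'$ and then invoke the forward lemmas, but as you admit in your last sentence this round-tripping is precisely the hard bookkeeping: both directions insert or remove auxiliary regions (splitting pairs, connecting rectangles, repleteness-forced regions, the deletions above), so equality of the forward image with $\mathcal{T}'$ is not automatic, and the forward lemmas only state one implication, so you would further need completed/incomplete/neither to be mutually exclusive and all three preserved. The paper instead argues backwards directly, and the one delicate point --- a $\bullet$-point $x\in\alpha_0\cap\varphi(\alpha_0)$ that is not two-sided while $s^-(x)$ is --- is resolved by a repleteness argument (Figure \ref{fig:IncompleteGoingBack}): if the rectangle where $\varphi(\alpha_1)$ and $\varphi(\alpha_2)$ run parallel were not in $\mathcal{T}'$, repleteness would also exclude the connecting rectangle, contradicting that $s^-(x)$ is two-sided; and if it is in $\mathcal{T}'$, it has no two-sided $\bullet$-point, so $\mathcal{T}'$ was not incomplete. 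Your appeal to ``tracing through the merge/split dictionary'' does not supply this argument, so the incomplete case is not actually proved.
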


\begin{proof}
    For regions whose entire set of vertices has a preimage we are done in the same way as the previous lemmas. So we only need to focus on regions that have one or more vertices without a preimage under $s^{\pm}$. We distinguish several different cases.

    The first case is when the region is supported in $\{ \alpha_1, \alpha_2 \}$. There are two options for this. The first one is when the region connects two regions which are not just supported in $\{ \alpha_1, \alpha_2 \}$, we will deal with this more generally in the second case. If it does not,  then the same reasoning as in Lemma \ref{TriangleFixed}  implies that it must be part of a splitting pair (otherwise $\mathcal{T}$ would not be nested), but Lemma \ref{TriangleFixed} already shows how to get the extended tower $\mathcal{T}$ in this case.

    The second case is a vertex belonging to an arc image (different from $\varphi(\alpha_1)$ and $\varphi(\alpha_2)$) that intersects $\alpha_1$ and $\alpha_2$. In this case we see that the arc image provides an edge for a positive region as in the previous case. Moreover, this means that the positive region must be connecting two negative regions, and the points of these regions that do have preimages will give a negative region in $\Gamma$. Essentially we are merging the two negative regions, which is the opposite operation to dividing a region into two connected by a third region as we had done in the previous Lemmas. The level of the new region will be the highest of the levels of the two original regions (this might also increase the level of positive regions with $\bullet$-points in the merged region) so $\mathcal{T}'$ is still nested.
    
    If there are not two regions but just one, then we have a vertex that is not two-sided, but then the preimage of a vertex from the positive region is not two-sided as in Figure \ref{fig:WeirdTwosided} (in particular neither of the extended towers can be completed, so assume that $\mathcal{T}'$ is incomplete). Moreover this means that we do not add a negative region to $\mathcal{T}$, so this would not affect whether every negative region in $\mathcal{T}$ has a two-sided $\bullet$-point. This could now result in a positive region with $\bullet$-points that do not belong to a negative region. In this case, we also do not add this region to $\mathcal{T}$ to make sure $\mathcal{T}$ is an extended tower. Now this could result in a negative region with $\circ$-points that do not belong to a positive region. We can carry on this procedure, but it must (at the latest) terminate when we reach level $0$, where we would have an extended tower $\mathcal{T} = \{R \}$, with $R$ a positive region, so $\mathcal{T}$ is nested, replete, nice, and incomplete.

    The third case is analogous to the second one, and happens when both $\varphi(\alpha_1)$ and $\varphi(\alpha_2)$ intersect an arc $\beta_j$. Observe that, as before, we can relate this case to the second one by interchanging the roles of $\Gamma'$ and $\varphi(\Gamma')$.

    Because all the vertices are inverse images under the slide maps, the resulting collection of regions $\mathcal{T}$ is an extended tower which moreover must be nice. To see whether it is replete, let $A \in \mathcal{R}(\Sigma, \varphi, \Gamma)$ such that $\textrm{Circ}(A) \subset \textrm{Circ}(\mathcal{T}) \cup \textrm{Circ}_{\partial}(\Gamma)$, and $\mathcal{T} \cup A$ is again an extended tower. Take $s^-(V(A))$. These vertices must also give negative region(s) in $\Gamma'$, so $V(A)$ is the inverse image of vertices of $\mathcal{T}'$ and thus is in $\mathcal{T}$, so $\mathcal{T}$ is replete.

    Now suppose that $\mathcal{T}'$ is completed. Then all its interior vertices are two-sided, but this implies that all interior vertices of $\mathcal{T}$ are two-sided, so $\mathcal{T}$ is completed. 
    
    So suppose that $\mathcal{T}'$ is incomplete, and let $R$ be a negative region in $\mathcal{T}$. We want to show that it has a two-sided $\bullet$-point. Take $s^-(V(R))$. These points are by construction vertices of regions in $\mathcal{T}'$. 
    The only case where a $\bullet$-point of a region in $\Gamma'$ being two-sided does not imply that its preimage is two-sided is when $P \cap \varphi(P)$ contains a $6$-gon. So suppose we have a negative region $R$ supported in $\Gamma$ with a $\bullet$-point $x \in \alpha_0 \cap \varphi(\alpha_0)$ that is not two-sided, and an induced region $R'$ supported in $\Gamma'$ such that $ y = s^-(x)$ that is two-sided. This means that $y$ is the vertex of a connecting region $R'_1$ that connects $R'$ to a negative region $R'_2$, which is a rectangle where $\varphi(\alpha_1)$ and $\varphi(\alpha_2)$ are parallel. But now, because $x$ is not two-sided, $R'_2$ has no two-sided $\bullet$-points (it only has two, a boundary point, and the point $z$ that would be $s^+(x)$ if $x$ were two-sided, and none of them can be two-sided). This means that either $\mathcal{T}$ was not incomplete, or $R'_2$ is not in $\mathcal{T}'$. But if $R'_2$ is not in $\mathcal{T}'$, since $\mathcal{T}'$ is replete $R'_1$ also cannot be in $\mathcal{T}'$. But this means that $s^-(x)$ is not two-sided --a contradiction. We can see this in Figure \ref{fig:IncompleteGoingBack}. 
    
    \begin{figure}[htp]
        \centering
        \includegraphics[height=4cm]{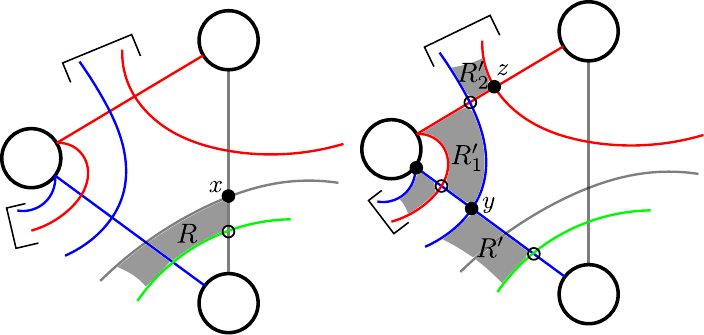}
        \caption[Going back.]{If $x$ is not two-sided but $y = s^-(x)$ is, then $\mathcal{T}$ cannot have been incomplete as $R'_2$ does not have any $\bullet$-points in common with any positive region.}
        \label{fig:IncompleteGoingBack}
    \end{figure}

    There only remains to show that this procedure does not yield an empty extended tower. For a contradiction, assume that it does. Then, there is a positive region $R$ in $\mathcal{T}'$ that does not induce a positive region in $\Gamma$. At least one vertex of $R$ must not have an inverse image under $s^+$. Moreover, $R$ cannot be a connecting region contained in $P$, in this case $R$ does not come from a positive region in $\Gamma$ (it connects two regions induced by a negative region in $\Gamma$) but one of those is in a lower level than $R$ so we can find a positive region in a lower level that also cannot induce a positive region in $\Gamma$ (if we assume that the induced extended tower is empty). Thus the only way this can happen is if we have $R$ and a negative region $R'$ that is not a connecting region, that is, there is not another positive region connected to $R$ by $R'$ (notice that we have encountered this case before with the roles of positive and negative regions reversed). For this to happen, we must have that a vertex $x$ of this region lies either on $\varphi(\alpha_1)$ or $\varphi(\alpha_2)$ and the intersection of the discs cut out by $\Gamma \cup \{ \beta \}$ and $\varphi(P)$ contains a $6$-gon, see Figure \ref{fig:EmptyTower} for an example.

    Suppose $x$ is a $\circ$-point. Then necessarily it lies on $\varphi(\alpha_2) \cap \beta_i$, for some $\beta_i \in \Gamma$. But now the negative region $R'$ does not have any common $\bullet$-points with any positive region, as it has its other $\bullet$-point on the boundary. Indeed, if it does not, we can show $\mathcal{T}'$ is not nested. Suppose that the other $\bullet$-point is not on the boundary. Then either it is not two-sided, in which case we are done because $\mathcal{T}$ is not incomplete, or it is the vertex of a positive region $R'_1$. Notice that the other $\circ$-point of $R'$ must also be a vertex of a positive region $R'_2$. Then, there must be another negative region $R'_3$ where $\varphi(\alpha_1)$ and $\varphi(\alpha_2)$ are parallel, until the boundary (if it is not until the boundary we can repeat this argument). Now let us look at the levels of the regions. Let $i$ be the level of $R'$. Then $R'_1$ is on level $j$ with $j > i$, as they share a $\bullet$-point. Similarly, $R'_2$ is on level $k$ with $k \leq i$, as they share a $\circ$-point. Then the same argument shows that $R'_3$ is on level $l$ with $l > k$, and $j \leq l$. But then we have $k \leq i < j \leq l < k$, a contradiction. Thus, the other $\bullet$-point of $R'$ is on the boundary. We can see this situation in Figure \ref{fig:Levels}.

    \begin{figure}[htp]
        \centering
        \includegraphics[height=6cm]{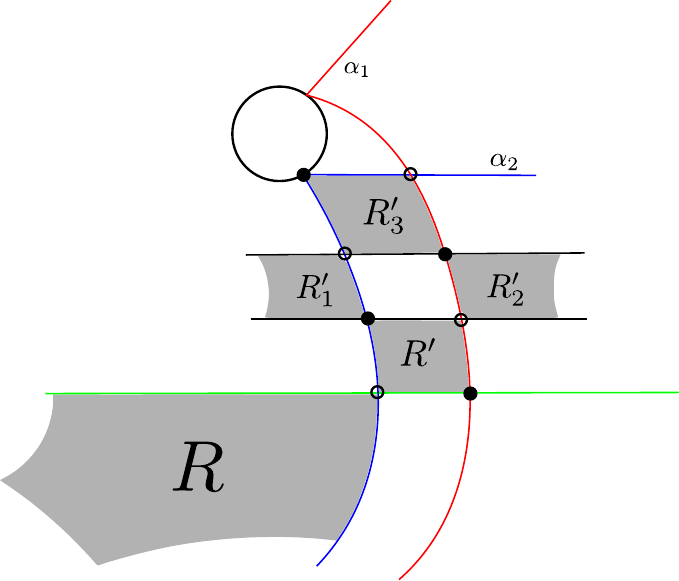}
        \caption[Empty extended tower, multiple regions.]{We cannot have multiple regions where $\varphi(\alpha_1)$ and $\varphi(\alpha_2)$ are parallel because then the extended tower would not be nested. Going from $R'$ to $R'_1$ and then $R'_3$ would increase the level, but going to $R'_2$ and then $R'_3$ would decrease it..}
        \label{fig:Levels}
    \end{figure}

    This means that $R'$ has no common $\bullet$-point with any positive region so $\mathcal{T}'$ is not incomplete. Moreover, if $\mathcal{T}$ is completed, then it must be supported in $\{ \alpha_1, \alpha_2 \}$ because there is a point in $\varphi(\alpha_1)$ that is not two-sided --a contradiction. We can see this in Figure \ref{fig:EmptyTower}.

     \begin{figure}[htp]
        \centering
        \includegraphics[height=4cm]{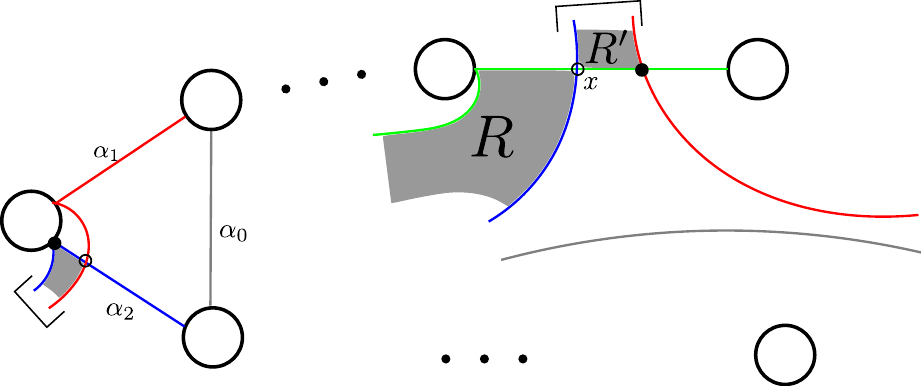}
        \caption[Empty extended tower, $\circ$-point.]{If $\mathcal{T}'$ induces an empty extended tower, then it cannot have been incomplete because $R'$ has no common $\bullet$-point with any positive region but also cannot have been completed unless it is supported in $\{ \alpha_1, \alpha_2 \}$ because there is a point in $\varphi(\alpha_1)$ that is not two-sided.}
        \label{fig:EmptyTower}
    \end{figure}

    Now suppose that $x$ is a $\bullet$-point. Then the negative region $R'$ has a $\circ$-point $y$ that is not two-sided (and not on a basepoint triangle), contradicting the definition of extended tower, see Figure \ref{fig:EmptyTower2}.

     \begin{figure}[htp]
        \centering
        \includegraphics[height=4cm]{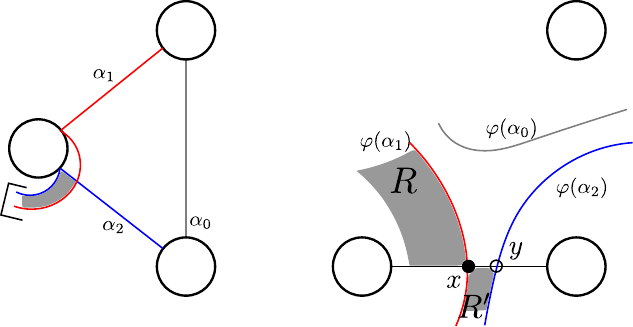}
        \caption[Empty extended tower, $\bullet$-point.]{The point $y$ is a $\circ$-point of a negative region but not a positive region.}
        \label{fig:EmptyTower2}
    \end{figure}

    \end{proof}

Recall that our setup is an arc collection $\Gamma$ such that there exists an arc $\beta$ with $\Gamma \cup \beta$ cutting out a disc from the basis. Now let $\alpha_0$ be an arc in $\Gamma$ that is not the next one to $\beta$ as we go along the boundary of the disc cut out by $\Gamma \cup  \{ \beta \}$, and $\alpha_1, \alpha_2$ arcs disjoint from this disc such that together with $\alpha_0$ they cut out a $6$-gon $P$.
Combining the previous Lemmas, we have proved the following Propositions.

\begin{proposition} \label{Induction1}
Let $\mathcal{T}$ be an extended tower in $\Gamma$. If $\alpha_1$ is not $\varphi$-contained in $\alpha_0$, then there is a (nice and replete) extended tower $\mathcal{T}'$ in $\Gamma' = (\Gamma \setminus \{ \alpha_0 \}) \cup \{ \alpha_1, \alpha_2 \}$ which is completed (respectively incomplete) if $\mathcal{T}$ is.
 \end{proposition}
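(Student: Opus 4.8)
The plan is to deduce Proposition \ref{Induction1} by assembling the case-by-case constructions of the preceding lemmas. Fix an extended tower $\mathcal{T}$ in $\Gamma$; by our standing conventions it is nice, replete and nested. In each case below the relevant lemma produces, via the slide maps $s^{\pm}$ (which are well defined by Proposition \ref{WellDefined}), a collection of regions $\mathcal{T}'$ supported in $\Gamma'$ that is again an extended tower, and that is nice, replete, nested, and completed (respectively incomplete) exactly when $\mathcal{T}$ is. So it suffices to check that the relevant cases are exhaustive under the standing hypothesis that $\alpha_1$ is not $\varphi$-contained in $\alpha_0$.

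First I would record the possible configurations. Since $\alpha_0,\alpha_1,\alpha_2$ are pairwise disjoint, so are their images, and $\varphi(P)$ is again a disc; examining which of the three arcs each image $\varphi(\alpha_i)$ crosses as it leaves $P$, relative to the counterclockwise labelling, shows that one of the following four alternatives must hold: $\alpha_2$ is $\varphi$-contained in $\alpha_1$; $\alpha_0$ is $\varphi$-contained in $\alpha_2$; $\alpha_1$ is $\varphi$-contained in $\alpha_0$; or none of the three arcs is $\varphi$-contained in another. The hypothesis excludes the third alternative, so we land in one of the other three.

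In the first case Lemma \ref{Contained1} produces $\mathcal{T}'$ directly, and in the second Lemma \ref{Contained2} does. In the remaining case --- no $\varphi$-containment, so that $\varphi(\alpha_0)$ leaves $P$ by intersecting $\alpha_1$ and $\varphi(\alpha_2)$ leaves $P$ by intersecting $\alpha_0$ --- I would use that $P$ and $\varphi(P)$ are discs in minimal position: each component of $P\cap\varphi(P)$ is a disc whose boundary alternates between sub-arcs of $\partial P$ and of $\partial\varphi(P)$, the three components meeting $\partial\Sigma$ are exactly the basepoint triangles, and, because only three arcs and three arc images are involved and the collections are bigon-free, every other component is a rectangle, at most one of which is in fact a $6$-gon. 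These are precisely the three possibilities for $P\cap\varphi(P)$ treated in Lemma \ref{NoIntersection}, Lemma \ref{RectangleIntersection}, and the lemma handling the case where $P\cap\varphi(P)$ contains a $6$-gon; applying whichever one applies yields $\mathcal{T}'$ with all the asserted properties, which proves the proposition.

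I expect the only genuinely delicate points to be these two exhaustiveness claims --- that, once $\alpha_1$ $\varphi$-contained in $\alpha_0$ is excluded, the $\varphi$-containment alternatives together with the ``no containment'' case cover everything, and that $P\cap\varphi(P)$ is never more complicated than basepoint triangles, rectangles and a single $6$-gon. Both are planar combinatorial facts resting on the counterclockwise orientation convention and on the arcs and arc images being in minimal (bigon-free) position. Everything downstream --- that the output of the slide maps assembles into an extended tower, and that niceness, repleteness, nestedness and the completed/incomplete dichotomy transfer to $\mathcal{T}'$ --- is exactly the content of the cited lemmas and requires no repetition.
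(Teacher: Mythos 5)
Your proposal is correct and follows essentially the same route as the paper: Proposition \ref{Induction1} is obtained there precisely by combining Lemmas \ref{Contained1}, \ref{Contained2}, \ref{NoIntersection}, \ref{RectangleIntersection} and the $6$-gon lemma, with the case ``$\alpha_1$ $\varphi$-contained in $\alpha_0$'' excluded by hypothesis. Your additional sketch of why these cases are exhaustive (the three possible $\varphi$-containments or none, and the classification of the components of $P \cap \varphi(P)$ into basepoint triangles, rectangles, and at most one $6$-gon) only makes explicit what the paper leaves implicit in its case split.
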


\begin{proposition} \label{Induction2}
Let $\mathcal{T}'$ be an extended tower in $\Gamma'$. If $\alpha_1$ is not $\varphi$-contained in $\alpha_0$, then there is a (nice and replete) extended tower $\mathcal{T}$ in $\Gamma $ which is completed (respectively incomplete) if $\mathcal{T}'$ is.
\end{proposition}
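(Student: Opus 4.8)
The plan is to prove Proposition~\ref{Induction2} by the same case analysis used for the forward implication Proposition~\ref{Induction1}, now reading off the \emph{converse} halves of the lemmas of this subsection. The feature that makes a converse available is that both slide maps $s^{+}$ and $s^{-}$ are injective, hence restrict to bijections onto their images; so given a region of $\mathcal{T}'$, every vertex of it lying in the image of $s^{\pm}$ has a well-defined preimage in $\Gamma\cap\varphi(\Gamma)$, and the only vertices with no preimage are the finitely many ``extra'' ones that were introduced when a region got broken up or when a connecting/repleteness region was added. The recipe is therefore: collect the set $\mathcal{V}(\mathcal{T}')$ of preimages, take the regions it induces in $\Gamma$, and then verify nonemptiness together with niceness, repleteness, nestedness, and being completed (resp.\ incomplete).

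First I would dispose of the containment cases. If $\alpha_{2}$ is $\varphi$-contained in $\alpha_{1}$, the second half of Lemma~\ref{Contained1} produces $\mathcal{T}$ with all properties preserved; if $\alpha_{0}$ is $\varphi$-contained in $\alpha_{2}$, the second half of Lemma~\ref{Contained2} does the same. The remaining containment, $\alpha_{1}$ being $\varphi$-contained in $\alpha_{0}$, is precisely the one excluded by hypothesis — it is the one in which an interior vertex would have to pull back to two boundary points — so it does not occur. In the non-containment case, where $\varphi(\alpha_{0})$ leaves $P$ through $\alpha_{1}$ and $\varphi(\alpha_{2})$ leaves $P$ through $\alpha_{0}$, I would split on whether $\mathcal{T}'$ uses some arc of $\Gamma'\setminus\{\alpha_{1},\alpha_{2}\}$. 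If it does, this is exactly the content of the preceding lemma (the one that builds $\mathcal{T}$ from $\mathcal{V}(\mathcal{T}')$), which already records that the induced collection is a nonempty nice replete nested extended tower and is completed (resp.\ incomplete) whenever $\mathcal{T}'$ is. If instead $\mathcal{T}'$ is supported entirely in $\{\alpha_{1},\alpha_{2}\}$, then by Propositions~\ref{TriangleLV} and \ref{TriangleFixed} it is either a completed splitting pair — in which case $\alpha_{2}$ is $\varphi$-contained in $\alpha_{1}$ by Definition~\ref{ContainedArcDef}, returning us to the first containment case — or a single incomplete region, in which case $\alpha_{0}$ is left-veering and one checks directly, as at the start of the proof of the preceding lemma and via Proposition~\ref{InitialLV}, that this is detected by an incomplete extended tower supported in $\Gamma$.

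The step I expect to be the real work is the non-containment sub-case in which a component of $P\cap\varphi(P)$ is a $6$-gon, so that $s^{+}(x)\neq s^{-}(x)$ on the vertex $x=\alpha_{0}\cap\varphi(\alpha_{0})$. Going backwards, a negative region $R'_{2}$ of $\mathcal{T}'$ (the rectangle where $\varphi(\alpha_{1})$ and $\varphi(\alpha_{2})$ run parallel) together with its connecting rectangle $R'_{3}$ must be ``merged'' into one negative region of $\Gamma$, and one has to check that the merged $2k$-gon is again a legitimate region, that no corner of a pulled-back region lands in the interior of another, and — the delicate point — that when some $R'_{2}$ cannot be merged (its $\bullet$-points have no preimage, which happens exactly when the corresponding vertex of $\mathcal{T}$ fails to be two-sided) the forced deletions of $R'_{2}$, then $R'_{3}$, and then possibly of further positive/negative regions that lose a supporting vertex, do not cascade all the way down to the empty collection. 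This termination is controlled by the level function of Definition~\ref{Nested}: a region is only ever deleted in favour of one strictly lower down, so the process halts at level $0$ with a nonempty positive region; this is also where one sees incompleteness is preserved, since a deleted $R'_{2}$ would have had to share a $\bullet$-point with a positive region, which repleteness of $\mathcal{T}'$ forbids once $R'_{2}$ is removed.

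The remaining sub-cases — $P\cap\varphi(P)$ consisting only of basepoint triangles, or of basepoint triangles together with rectangles — are the routine ones: there $s^{+}=s^{-}$ wherever both are defined, the slide maps only add or remove rectangles (or, at most once, a $6$-gon), and the bookkeeping of levels and of two-sidedness goes through essentially verbatim as in Lemmas~\ref{NoIntersection} and \ref{RectangleIntersection}. Putting the cases together, and noting that in each the induced $\mathcal{T}$ is nice and replete by construction and inherits completedness/incompleteness, gives the statement; as with Proposition~\ref{Induction1}, the argument is entirely local around $P$ and $\varphi(P)$, the slide maps being the identity elsewhere.
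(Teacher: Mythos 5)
Your proposal is correct and follows essentially the same route as the paper: Proposition~\ref{Induction2} is obtained there by combining the converse halves of Lemmas~\ref{Contained1} and~\ref{Contained2} with the final lemma of the subsection, which builds $\mathcal{T}$ from the preimage set $\mathcal{V}(\mathcal{T}')$ and verifies nonemptiness (via the level function), niceness, repleteness, and preservation of completed/incomplete, exactly as you describe. Your separate treatment of an extended tower supported only in $\{\alpha_1,\alpha_2\}$ is the same observation the paper folds into the first case of that lemma's proof, so no genuinely different machinery is involved.
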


\subsection{Main Results}

Using the base cases and the inductive step we can now show that a collection of arcs $\Gamma$ detects a left-veering arc $\beta$ if $\Gamma \cup \{\beta\}$ cuts out a disc (with the correct orientation), by which we mean that $\Gamma$ supports an incomplete extended tower if and only if $\beta $ is left-veering. Similarly, $\Gamma$ also detects fixable arc segments.

\begin{theorem} \label{TowerLV}
Let $ \{\alpha_i\}_{i=0} ^ n$ be a collection of properly embedded arcs cutting out a $(2n + 2)$-gon $P$, oriented counterclockwise, and assume $ \{\alpha_i\}_{i=1} ^ n$ are right-veering. Moreover, suppose no arc contained in $P$ is left-veering. Then $\alpha_0$ is left-veering if and only if $ \Gamma = \{\alpha_i\}_{i=1} ^ n$ supports a replete and incomplete extended tower $\mathcal{T}$.
\end{theorem}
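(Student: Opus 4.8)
The plan is to argue by induction on $n$, using Proposition~\ref{TriangleLV} as the base case and Propositions~\ref{Induction1} and~\ref{Induction2} as the inductive step. For $n=2$ the statement is exactly Proposition~\ref{TriangleLV}, so I would assume the theorem for configurations cutting out a $2n$-gon and deduce it for a $(2n+2)$-gon $P$ cut out by $\{\alpha_i\}_{i=0}^{n}$ with $\{\alpha_i\}_{i=1}^{n}$ right-veering and no left-veering arc contained in $P$ other than possibly $\alpha_0$ itself.

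For the inductive step, fix two consecutive arcs $\alpha_j,\alpha_{j+1}$ among $\alpha_1,\dots,\alpha_n$ with $2\le j\le n-2$ (possible once $n\ge 4$; the case $n=3$ is treated below), let $\delta$ be their arc-slide, and set $\Gamma_0=(\Gamma\setminus\{\alpha_j,\alpha_{j+1}\})\cup\{\delta\}$. Then $\delta,\alpha_j,\alpha_{j+1}$ cut out a hexagon $Q$, and $\{\alpha_0\}\cup\Gamma_0$ cuts out a $2n$-gon $P_0$. The restriction $2\le j\le n-2$ serves two purposes: $\delta$ joins two boundary arcs of $\Sigma$ distinct from the ones met by $\alpha_0$, so $\delta$ is not isotopic to $\alpha_0$; and $\delta$ is not the arc of $\Gamma_0$ adjacent to $\alpha_0$ along $\partial P_0$. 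I would then check that the hypotheses of the theorem descend to $\{\alpha_0\}\cup\Gamma_0$: the arc-slide $\delta$ is isotopic to an arc lying in $P$ (essentially $\alpha_j$ followed by $\alpha_{j+1}$ pushed slightly into $P$) and is not isotopic to $\alpha_0$, so it is right-veering, whence all arcs of $\Gamma_0$ are right-veering; and every arc contained in $P_0$ is isotopic either to an arc contained in $P$ or to $\delta$, so no such arc other than $\alpha_0$ is left-veering. It remains to check the last side condition of Propositions~\ref{Induction1}--\ref{Induction2}, namely that $\alpha_j$ is not $\varphi$-contained in $\delta$: were it so, $\varphi(\delta)$ would have to enter $Q$, but $\varphi(\delta)$ essentially follows $\varphi(\alpha_j)$ across the boundary arc lying between $\alpha_j$ and $\alpha_{j+1}$ and then $\varphi(\alpha_{j+1})$, and since these are right-veering this leads to a contradiction.

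Granting these verifications, Propositions~\ref{Induction1} and~\ref{Induction2}, applied to the splitting of $\delta$ back into $\alpha_j$ and $\alpha_{j+1}$, give that $\Gamma_0$ supports a replete incomplete extended tower if and only if $\Gamma$ does, while the inductive hypothesis applied to $\{\alpha_0\}\cup\Gamma_0$ gives that $\alpha_0$ is left-veering if and only if $\Gamma_0$ supports such a tower; concatenating these two equivalences finishes the step. For $n=3$, merging $\alpha_1$ and $\alpha_2$ into $\delta$ produces the hexagon configuration $\{\alpha_0,\delta,\alpha_3\}$, to which Proposition~\ref{TriangleLV} applies directly, and then $\{\delta,\alpha_3\}$ is related to $\Gamma=\{\alpha_1,\alpha_2,\alpha_3\}$ exactly as above; here $\delta$ is adjacent to $\alpha_0$ in the hexagon, so the adjacency hypothesis of Propositions~\ref{Induction1}--\ref{Induction2} is not literally met, but that hypothesis is only used to keep the connecting vertex of a \emph{completed} extended tower fixed by the slide maps and plays no role for incomplete towers, which have no connecting vertex, so the argument still goes through.

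The part I expect to be the genuine obstacle is the persistence of the hypotheses under the contraction. Arc-slides of right-veering arcs need not be right-veering in general (this is essentially the phenomenon illustrated in Figure~\ref{fig:RVBasis}), so the fact that $\delta$ is right-veering really relies on the assumption that $P$ contains no left-veering arc; one must also be certain that the reduction creates no new left-veering arc and that the $\varphi$-containment and adjacency side conditions genuinely hold, so that Propositions~\ref{Induction1} and~\ref{Induction2} can be invoked.
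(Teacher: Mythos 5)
Your overall skeleton (base case Proposition~\ref{TriangleLV}, inductive step via Propositions~\ref{Induction1} and~\ref{Induction2}) matches the paper, and you are right that the crux is verifying the hypotheses of those propositions. But that is exactly where your argument breaks. You merge an \emph{arbitrary} consecutive pair $\alpha_j,\alpha_{j+1}$ chosen only by its position, and then claim that $\alpha_j$ cannot be $\varphi$-contained in $\delta$ because $\alpha_j,\alpha_{j+1}$ are right-veering. That claim is false: $\varphi$-containment is a configuration that occurs precisely among right-veering arcs (it is defined via the second case of Proposition~\ref{RV}, whose hypothesis is that all three arcs are right-veering), so right-veeringness yields no contradiction. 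Concretely, if $\varphi(\alpha_j)$ veers strictly to the right at $\alpha_j(0)$ (leaving $P$ there), then $\varphi(\delta)$, which runs parallel to $\varphi(\alpha_j)\cup b\cup\varphi(\alpha_{j+1})$, must cross $\alpha_j$ near $\alpha_j(0)$ and forms a basepoint triangle with $\alpha_j$ inside $Q$; if moreover $\varphi(\alpha_j)$ enters $Q$ at $\alpha_j(1)$ and exits through $\delta$, and $\varphi(\delta)$ meets the interior of $\delta$, you are exactly in the excluded case ``$\alpha_j$ is $\varphi$-contained in $\delta$'' of Definition~\ref{ContainedArcDef}, which the slide-map machinery deliberately does not cover (the paper notes it would send an interior vertex to boundary points). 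Nothing in the hypotheses of Theorem~\ref{TowerLV} rules this out for your chosen $j$, and for small $n$ you have no freedom to choose a different pair. The paper avoids this precisely by letting the monodromy choose the pair: by an innermost-disc argument there is some $i_0$ with $\varphi(\alpha_{i_0})$ leaving $P$ through $\alpha_{i_0+1}$ in a basepoint triangle, and merging \emph{that} pair forces $\varphi(\alpha_{i_0})$ to exit the hexagon through $\alpha_{i_0+1}$ rather than through the arc-sum, so the excluded containment cannot occur.

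The second gap is your treatment of the case where the only admissible merge is adjacent to $\alpha_0$ (your $n=3$ discussion, and in the paper the case $\alpha_{i_0}=\alpha_1$). You dismiss the adjacency hypothesis of Propositions~\ref{Induction1}--\ref{Induction2} with the unproven assertion that it only matters for completed towers; that hypothesis sits in the standing setup of all the inductive lemmas (it is used, e.g., in Lemma~\ref{Contained1} to keep $s^-(y_0)=y_0$), so discarding it requires re-examining their proofs, not a one-line remark. The paper does not argue this way at all: when the only basepoint triangle is the one formed by $\varphi(\alpha_1)$ and $\alpha_2$, every other arc image must exit $P$ through $\alpha_1$, and the proof constructs the extended tower explicitly as a chain of rectangles, checking repleteness, niceness and nestedness by hand and showing directly (as in Proposition~\ref{InitialLV}) that the last rectangle has a completion if and only if $\alpha_0$ is not left-veering. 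This explicit construction is roughly half of the paper's proof and has no counterpart in your proposal.
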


\begin{proof}

We argue by induction on the number of arcs in our collection. The case $n=3$ is given by Proposition \ref{TriangleLV}. Now assume the result is true for $k$ arcs, with $k<n$. In the $2n$-gon at least one arc image $\varphi(\alpha_{i_0})$ will leave $P$ by intersecting $\alpha_{i_0+1}$ (because each arc image cuts a smaller subsurface inside $P$ so we can apply an innermost disc argument), creating a basepoint triangle. If $\alpha_{i_0} \neq \alpha_1$ then we can apply Propositions \ref{Induction1} and \ref{Induction2}, and there exists an incomplete extended tower $\mathcal{T}$ supported by $\Gamma$ if and only if there exists an incomplete extended tower $\mathcal{T}$ supported by $\{\alpha_1, \dots, \alpha_{n}\}\setminus (\{ \alpha_{i_0},\alpha_{i_0+1} \}) \cup \{ \beta \}$, that is nice and replete, where $\beta $ is the arc-sum of $\alpha_{i_0}$ and $\alpha_{i_0+1}$. But by induction this happens if and only if $\alpha_0$ is left-veering. 

So now suppose that $\alpha_{i_0} = \alpha_1$, and there are no other cases where the arc images create a basepoint triangle. Then every arc image $\varphi(\alpha_i)$ must leave $P$ by intersecting $\alpha_1$ (an arc image intersecting another arc would cut a smaller disc that does not contain $\alpha_1$ and we could apply our innermost disc argument there). Then $\Gamma$ supports an extended tower $\mathcal{T}$ whose regions are all rectangles as follows. The level $0$ positive region $R_1$ and its completion $R'_1$ come from the fact that $\alpha_2$ is $\varphi$-contained in $\alpha_1$ (because otherwise the arc-slide of $\alpha_1$ and $\alpha_2$ would be left-veering by Proposition \ref{TriangleLV}. Then $\varphi(\alpha_1)$ enters $P$ again and must exit by intersecting $\alpha_3$, because $\varphi(\alpha_3)$ leaves $P$ by intersecting $\alpha_1$. This forms another rectangle $R_2$, which must be completed by a rectangle $R'_2$. To see this, suppose for a contradiction that $R_2$ is not completed. Then, $\{ R_1, R'_1, R_2 \}$ would form an incomplete extended tower supported in $\{ \alpha_1, \alpha_2, \alpha_3 \}$, and then by induction their arc-sum (strictly speaking, the arc-sum with opposite orientation) would be left-veering, which contradicts the assumption that no arc contained in $P$ is left-veering. The rest of the rectangles are obtained in the same fashion. This extended tower is clearly nice and replete. Indeed, by construction positive regions are contained in $P$ and as for the negative regions, since we get one positive region per edge, we get one negative region as well. Moreover, in each negative region one edge lies on $\varphi(\alpha_1)$, and another in the corresponding edge of some $\varphi(\alpha_i)$ that goes to the boundary, see Figure \ref{fig:LastLV}. Therefore no arc image goes through the interior of the negative regions, so the extended tower is nice.
It is replete because every positive region, except the last one, already has a completion, and so there cannot be more negative regions added to the extended tower (by hypothesis, the last positive region does not have a completion because the extended tower is incomplete).

To see that it is nested, observe that $R_1$ is on level $0$ (and so is $R'_1$), and then the level increases by $1$ with each positive region.

However, if the extended tower is incomplete, the last (positive) rectangle cannot have a completion (otherwise the tower would be completed), and the same argument as in Proposition \ref{InitialLV} shows that $\alpha_0$ is left-veering. Conversely, if $\alpha_0$ is left-veering, suppose for a contradiction that the extended tower is completed, that is, the last positive rectangle does have a completion. But then the edge on $\alpha_1$ must be restricted. However the fact that $\alpha_0$ is left-veering means that its image must intersect this edge --a contradiction. We can see this in Figure \ref{fig:LastLV}.

\begin{figure} [htp]
    \centering
    \includegraphics[width=12cm]{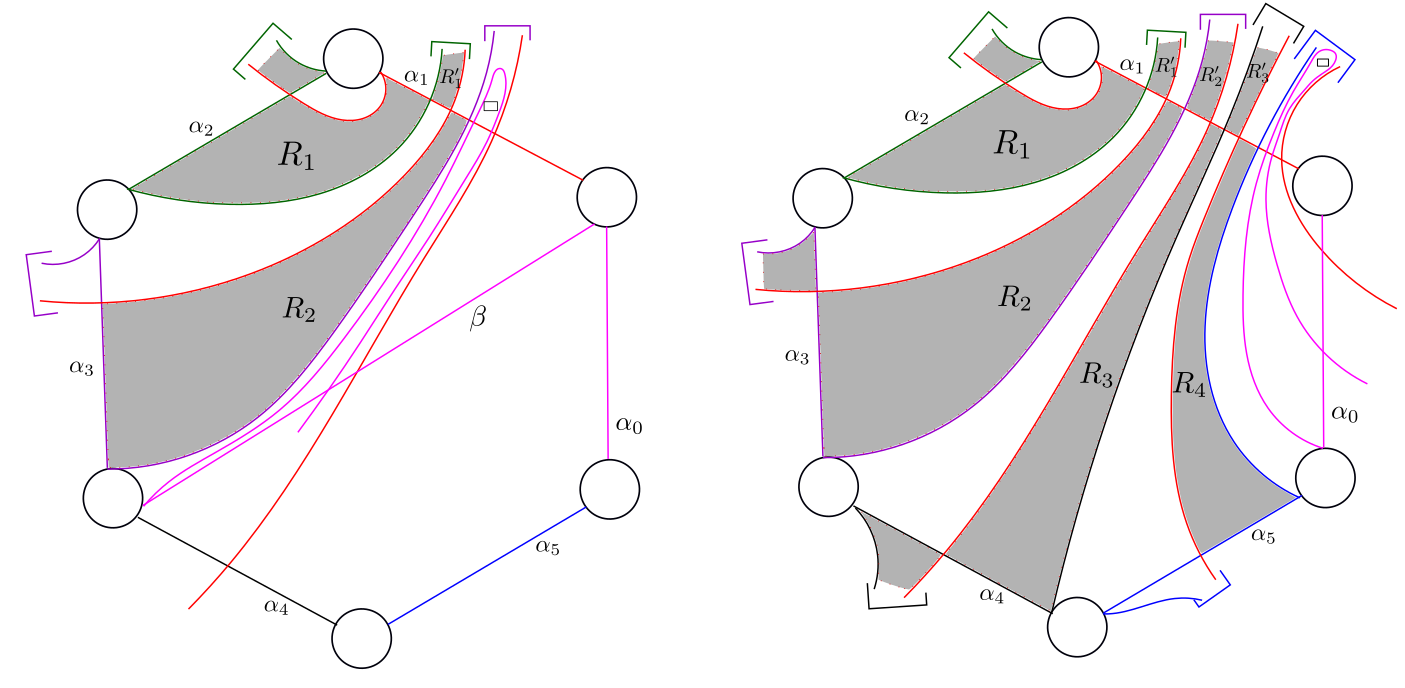}
    \caption[Incomplete extended tower from left-veering arc.]{On the left, if the incomplete tower is supported in a smaller collection of arcs then their arc-sum $\beta$ (with opposite orientation) is left-veering by induction. On the right, the incomplete extended tower. Observe we get one positive region and one negative region per edge $\alpha_2, \cdots \alpha_5$, with the positive regions contained in $P$ and the negative regions disjoint from $\varphi(\Gamma)$ in their interior.}
    \label{fig:LastLV}
\end{figure}

\end{proof}

\begin{theorem} \label{TowerFixedArc}
Let $ \{\alpha_i\}_{i = 0} ^n$ be a collection of properly embedded right-veering arcs cutting out a $(2n + 2)$-gon $P$, oriented and indexed counterclockwise, and suppose no arc contained in $P$ is left-veering. Let $\gamma$ be an arc segment contained in $P$ starting between $\alpha_n$ and $\alpha_0$ and ending in the interior of $\alpha_1$. Then $\gamma$ is fixable by $\varphi$ if and only if $\Gamma =\{\alpha_i\}_{i = 1} ^n $ supports a completed extended tower $\mathcal{T}$ whose connecting vertex coincides with $\gamma \cap \alpha_1$.
\end{theorem}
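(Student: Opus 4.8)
The plan is to mirror the proof of Theorem~\ref{TowerLV} almost verbatim, replacing ``left-veering / incomplete extended tower'' throughout by ``fixable arc segment / completed extended tower'', and arguing by induction on the number of arcs in the collection. The base case of three arcs is exactly Proposition~\ref{TriangleFixed}. So assume the statement holds for every collection of fewer arcs, and let $\{\alpha_i\}_{i=0}^n$ cut out the $(2n+2)$-gon $P$ with $\gamma$ as in the statement. By an innermost-disc argument at least one arc image $\varphi(\alpha_{i_0})$ leaves $P$ by intersecting $\alpha_{i_0+1}$, creating a basepoint triangle.

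First I would treat the case $\alpha_{i_0}\neq\alpha_1$. Let $\beta$ be the arc-sum of $\alpha_{i_0}$ and $\alpha_{i_0+1}$; since no arc contained in $P$ is left-veering, $\beta$ is right-veering, and because $\alpha_0$ and $\alpha_1$ are untouched, $\gamma$ is still an arc segment running from $\partial\Sigma$ (between $\alpha_0$ and the arc now adjacent to it) to the interior of $\alpha_1$, with the point $\gamma\cap\alpha_1$ unchanged, and $\gamma$'s fixability is intrinsic and hence also unchanged. Propositions~\ref{Induction1} and~\ref{Induction2} then give that $\Gamma=\{\alpha_i\}_{i=1}^n$ supports a completed (nested, nice, replete) extended tower with connecting vertex $\gamma\cap\alpha_1$ if and only if $\Gamma'=(\Gamma\setminus\{\alpha_{i_0},\alpha_{i_0+1}\})\cup\{\beta\}$ does, and the inductive hypothesis applied to the $2n$-gon cut out by $\{\alpha_0\}\cup\Gamma'$ identifies the latter with $\gamma$ being fixable. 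This is why the setup forbids sliding over $\alpha_1$: the connecting vertex must not move.

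It remains to handle the case where $\alpha_{i_0}=\alpha_1$ and no other arc image creates a basepoint triangle, so every $\varphi(\alpha_i)$ with $i\ge 2$ leaves $P$ by intersecting $\alpha_1$. As in Theorem~\ref{TowerLV}, $\alpha_2$ must be $\varphi$-contained in $\alpha_1$ — otherwise the arc-slide of $\alpha_1$ and $\alpha_2$ is a left-veering arc in $P$ — giving the level-$0$ positive rectangle $R_1$ and its completion $R_1'$; then $\varphi(\alpha_1)$ re-enters $P$ and exits through $\alpha_3$, giving a rectangle $R_2$ whose completion $R_2'$ must exist, since otherwise $\{R_1,R_1',R_2\}$ would be an incomplete extended tower supported in $\{\alpha_1,\alpha_2,\alpha_3\}$ and its arc-sum would be left-veering, contradicting the hypothesis; iterating produces a nested, nice, replete chain $\mathcal{T}=\{R_1,R_1',\dots,R_{n-1},R_{n-1}'\}$ of rectangles. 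The successive intersection points of $\varphi(\alpha_1)$ with $\alpha_1$ along this chain are all two-sided except the last, which is the single connecting $\bullet$-point making $\mathcal{T}$ completed in the sense of Definition~\ref{Completed}. If $\gamma$ is fixable then, after pushing $\varphi(\gamma)$ off its bigons, $\gamma$ runs alongside the chain and terminates precisely at that last point, so its connecting vertex is $\gamma\cap\alpha_1$. Conversely, given such a completed $\mathcal{T}$, the edge of $R_{n-1}'$ on $\alpha_1$ is restricted and, as in Proposition~\ref{RV}, after the last rectangle $\varphi(\alpha_1)$ and $\varphi(\alpha_0)$ are parallel up to $\partial\Sigma$, so the resulting edge on $\alpha_0$ is also restricted; hence $\varphi(\gamma)$ can go neither to the left of $\gamma$ (it would cross the edge on $\alpha_1$) nor strictly to the right (it would cross the edge on $\alpha_0$), and $\gamma$ is fixable exactly as in Proposition~\ref{InitialFixed}.

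The main obstacle, as in Theorem~\ref{TowerLV}, is not a single hard idea but the bookkeeping in the last case and the verification that the chain of rectangles is genuinely completed in the strict sense of Definition~\ref{Completed} — that exactly one interior $\bullet$-point fails to be two-sided and that it is the point of $\alpha_1\cap\varphi(\alpha_1)$ matching $\gamma\cap\alpha_1$. This hinges on using the ``no left-veering arc in $P$'' hypothesis to rule out every premature failure of a completion, together with the forced parallelism of $\varphi(\alpha_1)$ and $\varphi(\alpha_0)$ near the boundary which supplies the restricted edge needed for the reverse implication; tracking the connecting vertex faithfully through each arc-slide reduction is the other point that requires care.
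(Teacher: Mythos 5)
Your proposal is correct and follows essentially the same route as the paper: induction on the number of arcs with Proposition~\ref{TriangleFixed} as the base case, the arc-slide reduction via Propositions~\ref{Induction1} and~\ref{Induction2} when the basepoint triangle does not involve $\alpha_1$, and, in the remaining case, the chain of rectangles built exactly as in Theorem~\ref{TowerLV} with the restricted-edge argument showing $\gamma$ can go neither left nor right. Your write-up merely spells out in more detail the final case that the paper handles by reference to Theorem~\ref{TowerLV} and Figure~\ref{fig:LastInductionCase}.
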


\begin{proof}
We argue by induction on the number of arcs in our collection. The case $n=3$ is given by Proposition \ref{TriangleFixed}. Now assume the result is true for $k$ arcs, with $k < n$. In the $2n$-gon at least one arc image $\varphi(\alpha_{i_0})$ will leave $P$ by intersecting $\alpha_{i_0+1}$, creating a basepoint triangle (because each arc image cuts a smaller subsurface inside $P$). If $\alpha_{i_0} \neq \alpha_1$ then we can apply Propositions \ref{Induction1} and \ref{Induction2}, and there exists a completed extended tower $\mathcal{T}$ supported by $\Gamma$ if and only if there exists a completed extended tower $\mathcal{T}$ supported by $\{\alpha_1, \dots \alpha_{n}\}\setminus (\{ \alpha_{i_0},\alpha_{i_0+1} \}) \cup \{ \beta \}$, that is replete, where $\beta $ is the arc-slide of $\alpha_{i_0}$ and $\alpha_{i_0+1}$. But by induction this happens if and only if $\gamma$ is fixable by $\varphi$. 

So now suppose that $\alpha_{i_0} = \alpha_1$, and there are no other cases where the arc images create a basepoint triangle. Then every arc image $\varphi(\alpha_i)$ must leave $P$ by intersecting $\alpha_1$. Then the extended tower is a collection of rectangles, obtained as in Theorem \ref{TowerLV}, with the difference that now if the extended tower is completed then $\gamma$ must be fixable because it cannot go to either right or left, and conversely if $\gamma$ is fixable then the extended tower must be completed. We can see this last case in Figure \ref{fig:LastInductionCase}.

\begin{figure} [htp]
    \centering
    \includegraphics[width=8cm]{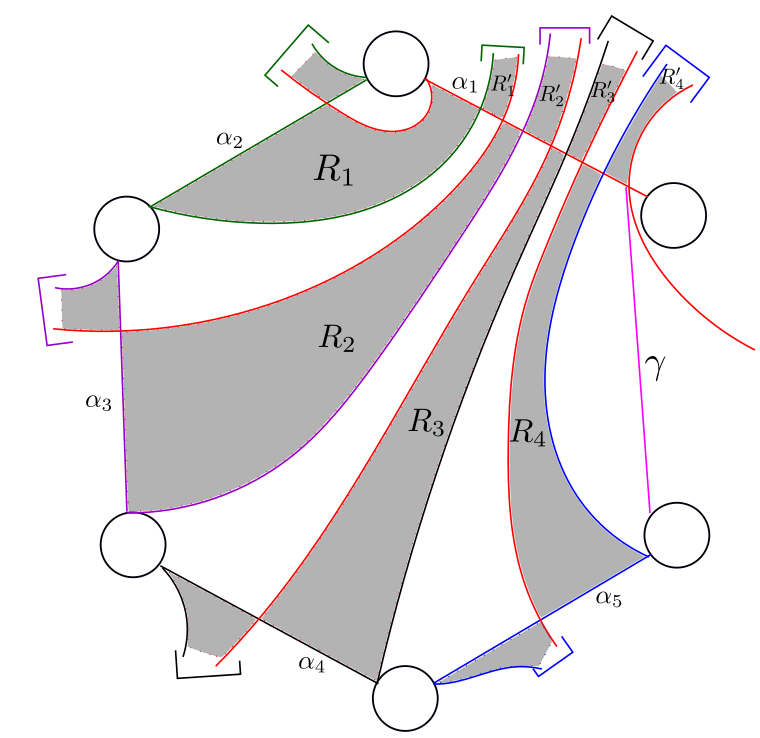}
    \caption[Completed extended tower from fixable arc.]{The extended tower when the only basepoint triangle is formed by $\alpha_1$ and $\alpha_2$.}
    \label{fig:LastInductionCase}
\end{figure}

\end{proof}

Our aim is to detect a left-veering arc with a collection of extended towers, each of which will detect a segment of the arc. However, in the setup we have so far, we only detect arcs (or arc segments) with a starting point on the boundary. To get around this, let $ \mathcal{C} = \{\alpha_0, \dots ,\alpha_n\}$ be an arc collection cutting out a $(2n + 2)$-gon $P$, oriented and labelled counterclockwise. Now assume that there is a point $x \in \alpha_n$ that is the endpoint of a fixed arc segment disjoint from $P$, so by Theorem \ref{TowerFixedArc} there exists an extended tower with $x$ as its connecting veretx. Let $\alpha'_n$ be the (oriented) arc segment between $\alpha_n(0) $ and $x$, and let $\alpha'_0$ be the (oriented) arc segment that goes from $x$ to $\alpha_1(0)$. Then $\mathcal{C}' = \{\alpha_0', \alpha_1, \dots ,\alpha_{n-1}, \alpha_n'\}$ is a collection of arc segments that cut out a $(2n+1)$-gon $P'$. Moreover, at $x$, the tangent vector of $\alpha'_n$ followed by the tangent vector of $\varphi(\alpha'_n)$ define the orientation of $\Sigma$ (because $x$ is the connecting vertex of a completed extended tower) so in a slight abuse of notation we can say that the arc segment $\alpha'_n$ is right-veering, and we can adapt the terminology and methods of extended towers to $\mathcal{C}'$ (since the only properties we use in the results is that the arcs bound a disc and are disjoint and right-veering). In particular Theorems \ref{TowerLV} and \ref{TowerFixedArc} still hold. We can see this situation in Figure \ref{fig:PartialTower}.

\begin{definition} \label{PartialTower}
Let $\mathcal{T}$ be an extended tower supported in the collection $\mathcal{C}' \setminus \{\alpha_0'\}$. We say $\mathcal{T}$ is a \emph{partial extended tower}, and $x$ its \emph{starting point}.
\end{definition}

We will also say, in a slight abuse of notation, that such a partial extended tower $\mathcal{T}$ is \emph{supported in $\Gamma = \{\alpha_1, \dots \alpha_n \}$}.

\begin{figure} [htp]
    \centering
    \includegraphics[width=5cm]{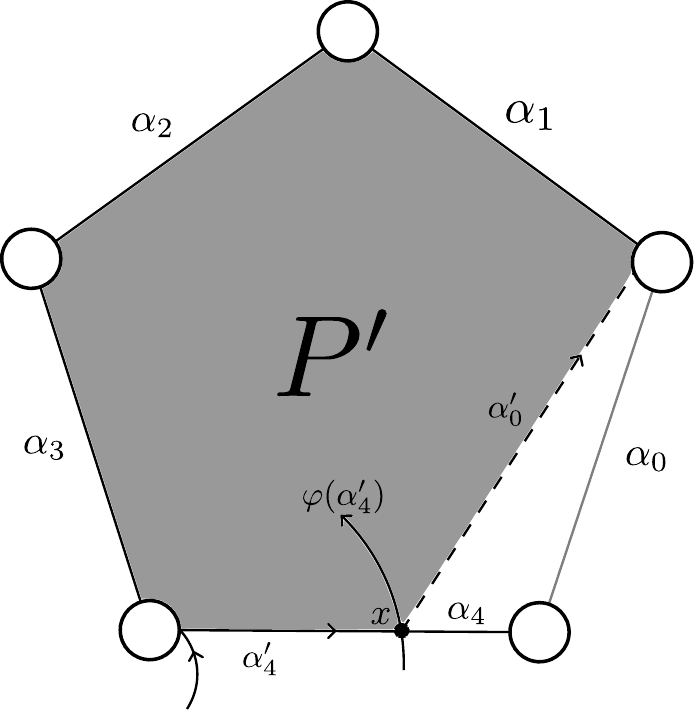}
    \caption[Partial extended towers.]{The setup for a partial extended tower supported in $  \{ \alpha_1, \alpha_2, \alpha_3, \alpha'_4 \}  $. The fact that $x$ is the connecting vertex of an extended tower means that at $x$ the tangent vector of $\alpha'_4$ followed by the tangent vector of  $\varphi(\alpha'_4)$ define the orientation of $\Sigma$.}
    \label{fig:PartialTower}
\end{figure}

We are now almost ready to prove that we can detect the existence of a left-veering arc from a basis of the surface, we just need one more definition. Notice that for our results to work we need the arcs cutting out a disc to be distinct. However, if we simply take a basis as our collection of arcs, this does not necessarily happen. We solve this issue by duplicating every arc from the basis. This has the effect that when a left-veering arc intersects the basis it actually intersects two (isotopic) arcs $\alpha$ and $\beta$. Moreover, the segment of the left-veering arc before this intersection will cut out a disc with a collection containing one of the arcs (say, $\alpha$) and the segment after the intersection will cut out a disc with a collection containing the other arc (say, $\beta$). Then an extended tower in the first collection will have a connecting vertex on $\alpha \cap \varphi(\alpha)$ and an extended tower in the second collection will have a starting point on $\beta \cap \varphi(\beta)$. Because we want to construct the left-veering arc from the extended towers, we want to relate these two points. 

\begin{remark}
    We want to distiguish $\alpha$ and $\beta$ even though they are isotopic because $\alpha$ could also be an arc in the second collection, and we want each arc from the collection supporting an extended tower to be distinct. Note though that we only need to duplicate each arc from a basis because the disc cut by a basis has exactly two copies of each arc, so each extended tower will have at most two isotopic arcs.
\end{remark}

\begin{definition}
    Let $\alpha$ and $\beta$ be two isotopic properly embedded arcs. Then $\varphi(\alpha)$ and $\varphi(\beta)$ are always parallel and, for an intersection $x \in \alpha \cap \varphi(\alpha)$, then we have a small rectangle contained in the intersection of the thin strip between $\alpha$ and $\beta$ with its image which has $x$ as a vertex. We call the vertex of this rectangle $y \in \beta \cap \varphi(\beta) $ the \emph{adjacent point to $x$}.
\end{definition}

\begin{figure} [htp]
    \centering
    \includegraphics[width=4cm]{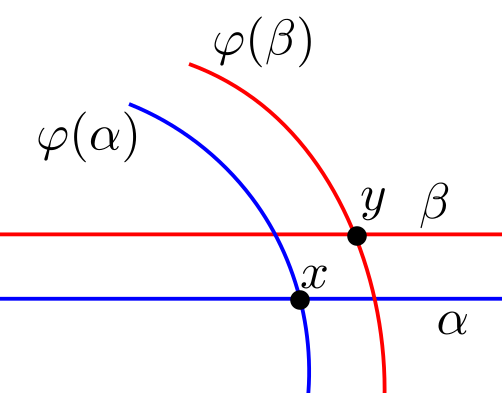}
    \caption[Adjacent points.]{The point $y$ that is adjacent to the point $x$.}
    \label{fig:AdjacentPoint}
\end{figure}

We can now prove our main result, which we restate for the convenience of the reader.

\main*

\begin{proof}
Cut $\Sigma $ along the arcs $\Gamma$, making a disc, and orient them counterclockwise. Then $\gamma$ is fixable until it intersects one of the arcs (if it is disjoint from the basis then it will cut out a disc with a subcollection of arcs from $\Gamma$ and then Theorem \ref{TowerLV} gives an incomplete extended tower). Then Theorem \ref{TowerFixedArc} gives the first completed extended tower $\mathcal{T}_1$. Then $\gamma$ is again fixable from the adjacent point to this point until the next intersection with $\Gamma$ (and also in the small rectangle between the adjacent points), and now modifying Theorem \ref{TowerFixedArc} for the case where we have an arc segment with fixed endpoints gives the first completed partial extended tower. Repeat until $\varphi(\gamma)$ goes to the left, and then modifying Theorem \ref{TowerLV} gives the incomplete partial extended tower. 

For the converse, observe that both Theorem \ref{TowerFixedArc} and Theorem \ref{TowerLV} are if and only if statements, and the left-veering arc is constructed by joining all the fixed arc segments given by the completed extended towers and the left-veering arc segment given by the incomplete one. Observe that the small arc segments between adjacent points necessary to connect all of the arc segments given by Theorems \ref{TowerFixedArc} and \ref{TowerLV} are fixable because their endpoints are fixed points and they lie in the thin strip between isotopic arcs.
\end{proof}

Notice that the number of extended towers $N$ in a collection detecting a left-veering arc in the construction described by Theorem \ref{TowerCollection1} coincides with the number of intersections of the arc with the basis. Moreover, each of these points corresponds to a point $\alpha \cap \varphi(\alpha)$ for some arc $\alpha$ in the basis. Once we fix a basis, the number of such points is finite and gives an upper bound for $N$. Moreover, each extended tower is also a finite collection of regions. Therefore, Theorem \ref{TowerCollection1} implies the existence of an algorithm that takes as input a basis of arcs and their images and either produces a collection of extended towers giving a left-veering arc or terminates in a finite number of steps, which means that the monodromy is right-veering.

\section{Examples} \label{Example}

Theorem \ref{TowerCollection1} does not make any assumptions on the number of extended towers needed to detect a left-veering arc, and a natural question is whether multiple extended towers are always needed, and, if the answer is affirmative, whether there is an upper bound on the number of extended towers that does not depend on the choice of basis. Example \ref{LargeIntersection} shows that indeed some cases require multiple extended towers and the number of extended towers required can be made arbitrarily large. Recall that the arc segments that form a left-veering arc $\gamma$ are determined by the intersections of $\gamma$ with a basis, and each segment is detected with an extended tower. Therefore for any natural number $n$ we construct an open book which is not right-veering and a basis such that each arc has more than $n$ intersections with every left-veering arc.

\begin{example} \label{LargeIntersection}
    Let $\Sigma$ be a planar surface with $4$ boundary components $\{ C_i\}_{i = 1}^4$, and $\varphi = \tau_1\tau_2\tau_3\tau_a\tau_b^{-1}$ (where $\tau_i$ represents a positive Dehn twist around the boundary component $C_i$) as shown in Figure \ref{fig:Example}. Then $(\Sigma, \varphi)$ is not a right-veering open book, as the arc $\gamma$ in Figure \ref{fig:Example} is left-veering. However, any left-veering arc has to start in the boundary component $C_4$, and moreover has to intersect $b$ before it intersects $a$. In particular, this means that it has to intersect the arc $\delta$ going from $C_1$ to $C_3$, and thus the curve $c$ that separates $C_1$ and $C_3$ from $C_2$ and $C_4$. We choose a basis $\mathcal{B}$ of right-veering arcs such that all arcs intersect $c$. For every natural number $n$, let $\mathcal{B}_n = \tau_c^n(\mathcal{B})$. Then $\mathcal{B}_n$ is a basis which is also right-veering and every arc of $\mathcal{B}_n$ intersects every left-veering arc more than $n$ times. But this in turn implies that more than $n$ extended towers are needed for $\mathcal{B}_n$ to detect a left-veering arc.

    \begin{figure} [htp]
    \centering
    \includegraphics[width=8cm]{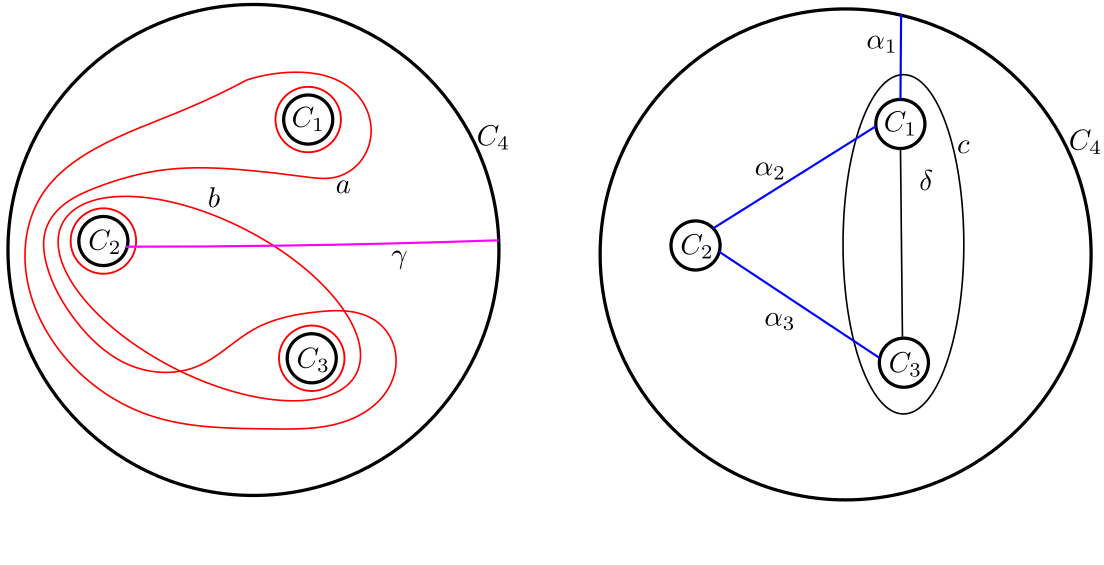}
    \caption[Arbitrarily many extended towers needed.]{On the left, the surface $\Sigma$ with the curves involved in the monodromy, and a left-veering arc $\gamma$. On the right, a basis $\mathcal{B} = \{ \alpha_1, \alpha_2, \alpha_3 \}$ such that every arc in $\mathcal{B}$ intersects the curve $c$.}
    \label{fig:Example}
    \end{figure}

\end{example}

We note, however, that this procedure is not very efficient, since it produces an extended tower for each intersection of the left-veering arc with the basis, and it would in some cases be possible to reduce the number of extended towers needed to detect a left-veering arc.

\begin{example}
    Let $(\Sigma, \varphi)$ be the open book from Example \ref{LargeIntersection}, and consider the basis $\{ \alpha_1, \alpha_2, \beta_3   \}$ on the left-hand side of Figure \ref{fig:Inefficient}. The left-veering arc $\gamma$ has an intersection point $x$ with the basis. 
    
    By the procedure we have described, we double the arcs from the basis, and find fixable arc segments and left-veering arc segments disjoint from the arcs in the (duplicated) basis except at their endpoints. Thus, the fact that $\gamma$ is left-veering is detected in two steps. First we detect that $\gamma_1$ is a fixable arc segment with a completed extended tower $\mathcal{T}_1$ supported in $\{ \alpha_1, \alpha_2  \}$, because they together with the dashed arc cut out a disc $P_1$ from $\Sigma$. This extended tower has $x$ as its connecting vertex. Then, we detect that $\gamma_2$ is a left-veering arc segment with an incomplete extended tower $\mathcal{T}_2$ supported in $\{ \beta_1, \beta_2, \beta_3 \}$, because they together with $\gamma_2$ cut out a disc $P_2$ from $\Sigma$. This (partial) extended tower has $y$, which is is the adjacent point to $x$, as its starting point. Thus the arc $\gamma$, which is the union of $\gamma_1$ and $\gamma_2$, is left-veering. We can see this on the left hand side of Figure \ref{fig:Inefficient}. 

    However, we can also detect the fact that $\gamma$ is a left-veering arc directly with a single extended tower. This is because it cuts out a disc $P$ together with $\alpha_1, \beta_1, \beta_3$, so these $3$ arcs support an incomplete extended tower. However, the procedure we described will not find it because the interior of $P$ is not disjoint from the basis arcs. We can see this on the right hand side of Figure \ref{fig:Inefficient}.

    \begin{figure} [htp]
    \centering
    \includegraphics[width=8cm]{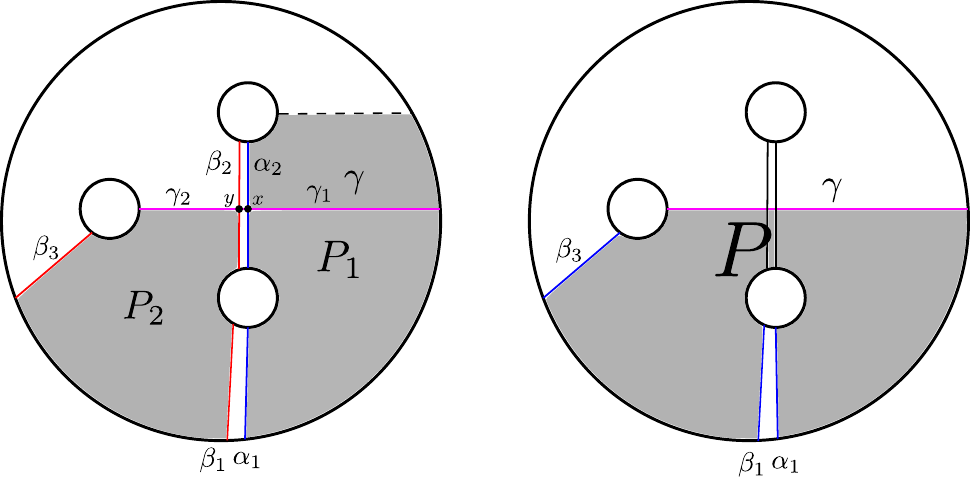}
    \caption[The algorithm is inefficient.]{On the left, the procedure we described uses two extended towers to detect the fact the $\gamma$ is left-veering. However, the disc $P$ on the right implies that only one extended tower is needed.}
    \label{fig:Inefficient}
    \end{figure}

\end{example}

We end with a detailed example of a collection of two extended towers detecting a left-veering arc.

\begin{example}
    Let $\Sigma$ be a planar surface with $7$ boundary components, and $\varphi$ the mapping class determined by the image of the basis $\mathcal{B}$ as shown in Figure \ref{fig:LastExample}, where the straight lines denote arcs and curved lines denote images of the arcs. At first inspection, it is unclear whether this monodromy is right-veering.

    \begin{figure} [htp]
    \centering
    \includegraphics[width=9cm]{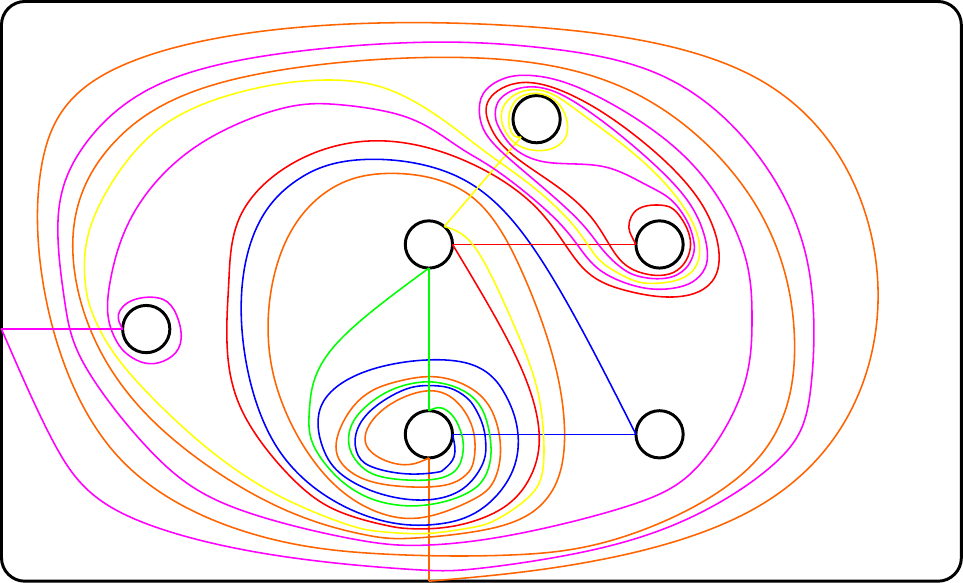}
     \caption[The monodromy.]{The monodromy of the open book is determined by the image of the arcs in the chosen basis --here the arcs are straight lines, and curved ones are their images.}
    \label{fig:LastExample}
    \end{figure}

    However, using Theorem \ref{TowerCollection1} we find two extended towers $\mathcal{T}_1, \mathcal{T}_2$ that construct a left-veering arc. 
    
    First, we take the arcs $\alpha_1$, $\alpha_2$, and $\alpha_3$ in Figure \ref{fig:FirstTower}. There is a positive region $R_1$ using $\alpha_1$ and $\alpha_3$ which is on level $0$ because its $\bullet$-points are on the boundary. Then, the region $R'1$ is a negative region on level $0$ with the same $\circ$-points as $R_1$. One of its $\bullet$-points is the point $x$ that we want to use for the construction of the left-veering arc, however the other one is still not on the boundary, so we need to keep going to obtain a replete extended tower. For that, we take the positive region $R_2$, which is on level $1$ because it has a $\bullet$-point on a level $0$ region. This gives us the negative level $1$ region $R'_2$, with the same $\circ$-points as $R_2$. Moreover, all its $\bullet$-points are on the boundary. Therefore, $\mathcal{T}_1 = \{R_1, R'_1, R_2, R'_2\}$ is a replete extended tower that is completed. It is immediate from Figure~\ref{fig:FirstTower} that it is also nice. Indeed, $R_1$ and $R_2$ are contained in the square cut out by $\alpha_1, \alpha_2, \alpha_3$, and their arc sum, and $R'_1, R'_2$ are disjoint from $\varphi(\alpha_i)$ in their interior for $i = 1,2,3$. Then, by Theorem~\ref{TowerFixedArc}, the arc segment $\gamma_1$, which starts at the endpoint of $\alpha_1$ and ends in $x$; is fixable, see bottom right of Figure \ref{fig:FirstTower}. 
    
    \begin{figure} [htp]
    \centering
    \includegraphics[width=9cm]{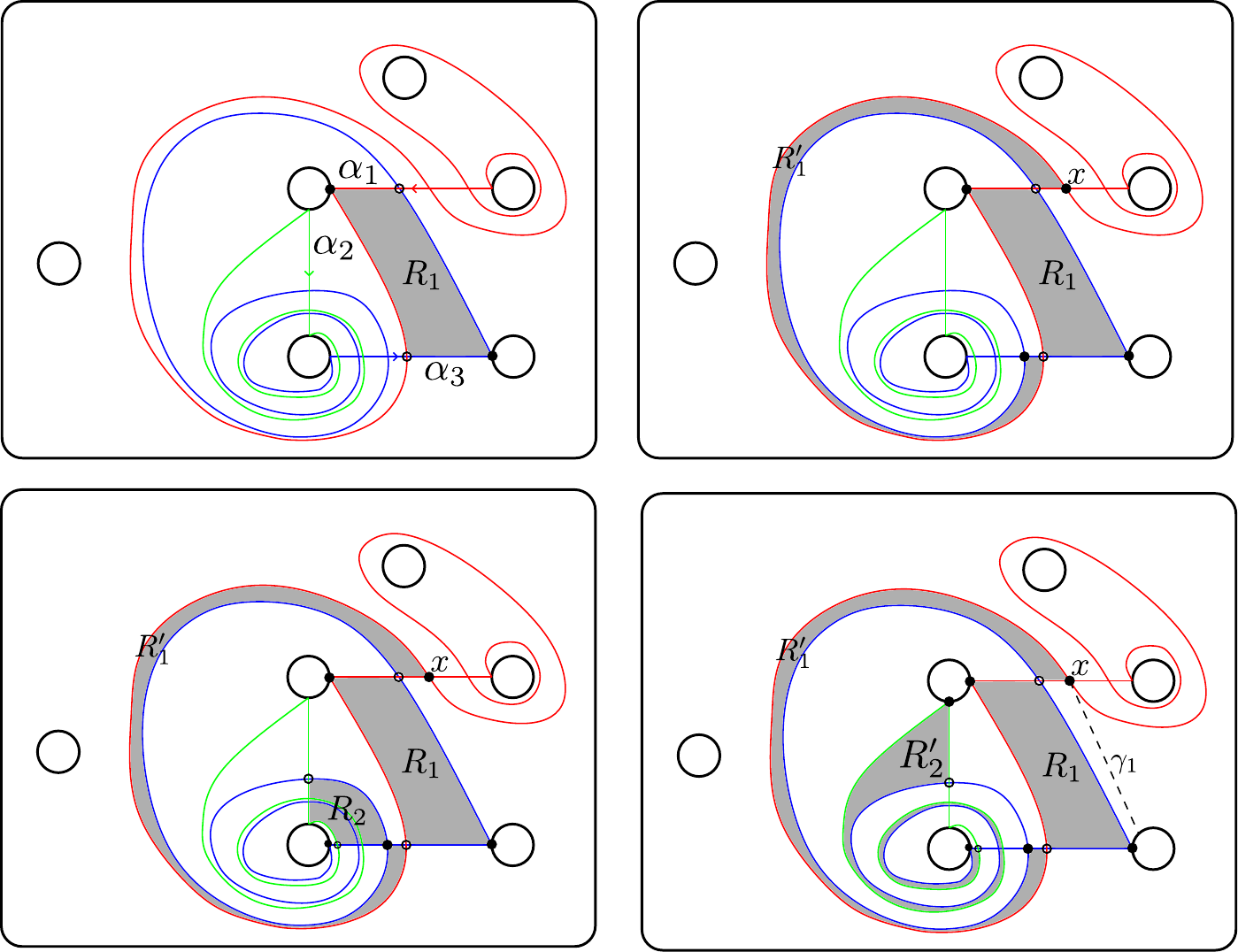}
    \caption[The first extended tower.]{On the top left, the positive level $0$ region $R_1$, which is completed by the negative level $0$ region $R'_1$, shown on the top right. Then, on the bottom left, the positive level $1$ level region $R_2$, which is completed by the negative level $1$ region $R'_2$ on the bottom right (here we removed $R_2$ from the figure for clarity, as $R_2$ and $R'_2$ overlap). Then, the arc $\gamma_1$ is fixable. Note that the rest of the arcs in the basis have also been removed for clarity.}
    \label{fig:FirstTower}
    \end{figure}
    Second, we take $\alpha'_1$ and $\alpha_4$, where $\alpha'_1$ is the segment of $\alpha_1$ (now with its orientation reversed so that it induces the correct orientation in the regions) up to $x$. These two arcs support a positive level $0$ region $R_3$ which is incomplete, see the left hand side of Figure \ref{fig:SecondTower}.
    Then, the (partial) extended tower $\mathcal{T}_2 = \{ R_3 \}$, is incomplete by construction. Moreover, it is clearly nice, and it is replete because there are no more possible regions supported in $\{\alpha'_1, \alpha_4\}$. Thus by Theorem \ref{TowerLV} (indeed Proposition \ref{InitialLV} is enough as there is a unique region) the segment $\gamma_2$ is left-veering. 
    
    This implies that the arc $\gamma = \gamma_1 \cup \gamma_2$ is left-veering, which we can see on the right hand side of Figure \ref{fig:SecondTower}.

    \begin{figure} [htp]
    \centering
    \includegraphics[width=9cm]{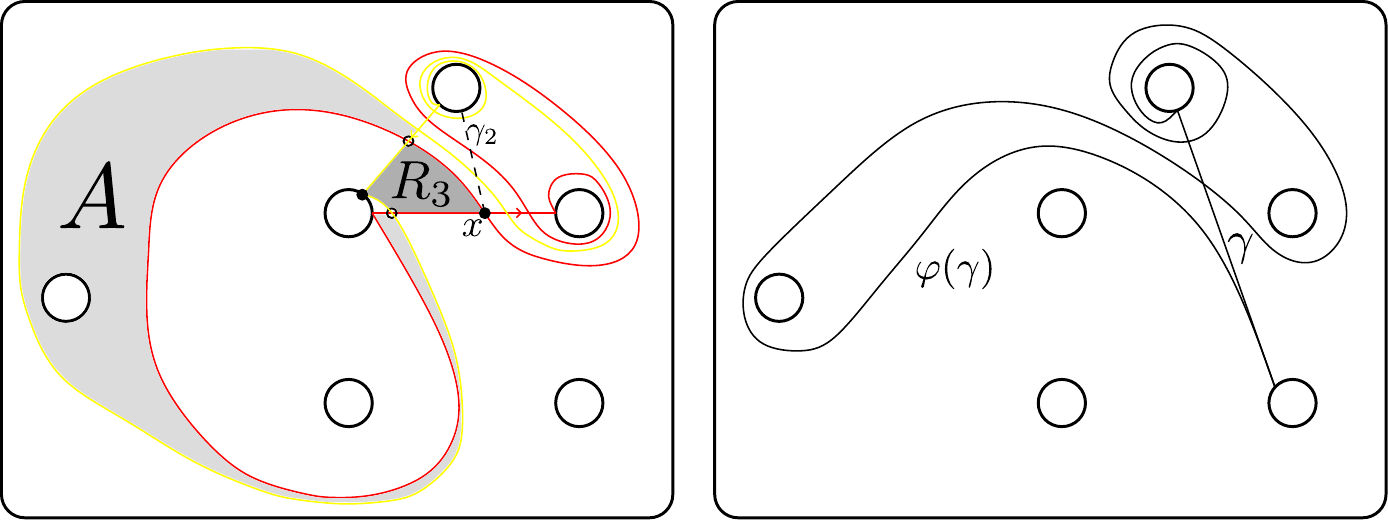}
    \caption[The second extended tower.]{On the left, the positive level $0$ region $R_3$, which cannot be completed as the shaded part $A$ of the surface where the completion would have to be is not a disk. This implies that $\gamma_2$ is left-veering. Joining $\gamma_1$ and $\gamma_2$ gives us the left-veering arc $\gamma$, which we see on the right, together with its image.}
    \label{fig:SecondTower}
    \end{figure}

    We remark that there are other left-veering arcs in this open book, such as $\widetilde{\gamma}$ in Figure \ref{fig:OtherArc}, which is detected by a single extended tower supported in the arcs $\alpha_1, \alpha_2, \alpha_3, \alpha_4, \widetilde{\alpha_1}, \widetilde{\alpha_4}$, where $\widetilde{\alpha_1}, \widetilde{\alpha_4}$ are copies isotopic to $\alpha_1$ and $\alpha_4$. However, this extended tower is more complicated in the sense that it has more regions and moreover it requires doubled arcs. In contrast, the previous one does not, as each extended tower requires a single copy of each arc ($\alpha_1$ is used twice, but in different extended towers, so for practical purposes we can draw it just once). Therefore, while it would be ``easier" to find for a computer, it might be harder to find for a human\footnote{Indeed, for the author it was harder to find than $\gamma$.}. Finally, we note that the extra left-veering arcs may be removed by post-composing with positive twists along curves that miss $\varphi(\gamma)$, leaving us with only left-veering arcs that intersect the basis.

    \begin{figure} [htp]
    \centering
    \includegraphics[width=9cm]{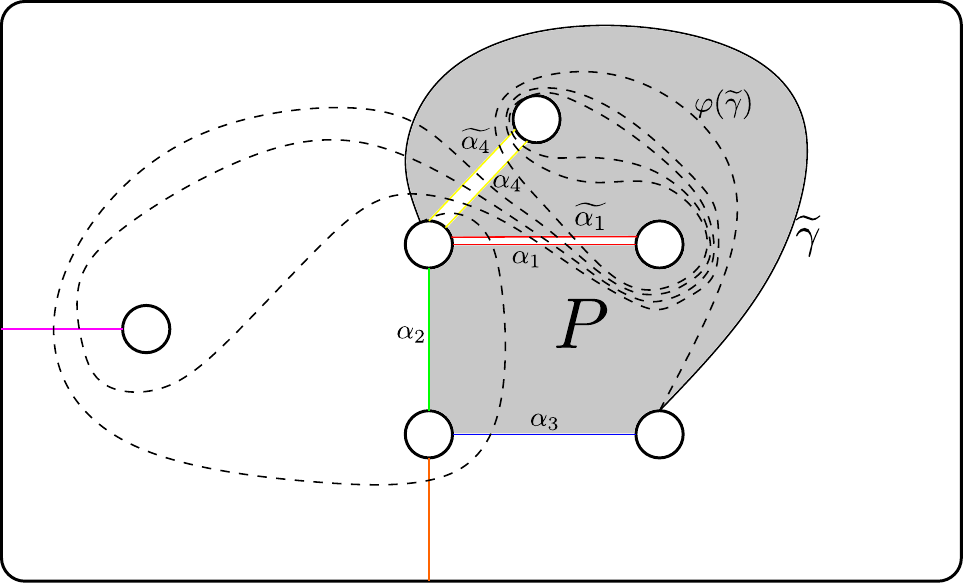}
    \caption[Another left-veering arc.]{The arc $\widetilde{\gamma}$ is also left-veering, and we could detect it using the arcs $\alpha_1, \alpha_2, \alpha_3, \alpha_4, \widetilde{\alpha_1}, \widetilde{\alpha_4}$, as they, together with  $\widetilde{\gamma}$, cut out a disk $P$ from the surface.}
    \label{fig:OtherArc}
    \end{figure}
\end{example}

\bibliographystyle{myamsalpha} 
\bibliography{main}

\end{document}